\newcommand*{\DashedArrow}[1][]{\mathbin{\tikz [baseline=-0.25ex,-latex, dashed,#1] \draw [#1] (0pt,0.5ex) -- (1.3em,0.5ex);}}
\makeatletter\patchcmd{\@startsection}{\@afterindenttrue}{\@afterindentfalse}{}{}\makeatother    
\patchcmd{\section}{\scshape}{\bfseries}{}{}\makeatletter\renewcommand{\@secnumfont}{\bfseries}\makeatother           
\providecommand \@dotsep{5} \def\listtodoname{List of Todos} \def\listoftodos{\@starttoc{tdo}\listtodoname} \makeatother 
\theoremstyle{plain}
\newtheorem{thm}{Theorem}[section] 
\newaliascnt{lemma}{thm}\newtheorem{lemma}[lemma]{Lemma}\aliascntresetthe{lemma}
\newaliascnt{cor}{thm}\newtheorem{cor}[cor]{Corollary}\aliascntresetthe{cor}
\newaliascnt{prop}{thm}\newtheorem{prop}[prop]{Proposition}\aliascntresetthe{prop}
\newtheorem{thmA}{Theorem} 
\newaliascnt{propA}{thmA}\newtheorem{propA}[propA]{Proposition}\aliascntresetthe{propA}
\newtheorem*{thm*}{Theorem}
\newtheorem*{lem*}{Lemma}
\newtheorem*{cor*}{Corollary}
\newtheorem*{problem*}{Problem}
\theoremstyle{definition}
\newaliascnt{df}{thm}\newtheorem{df}[df]{Definition}\aliascntresetthe{df}
\newaliascnt{rem}{thm}\newtheorem{rem}[rem]{Remark}\aliascntresetthe{rem}
\newaliascnt{ex}{thm}\newtheorem{ex}[ex]{Example}\aliascntresetthe{ex}
\newtheorem*{df*}{Definition}
\newtheorem*{ex*}{Example}
\newtheorem*{rem*}{Remark}
\patchcmd{\@startsection}{\@afterindenttrue}{\@afterindentfalse}{}{}             
\patchcmd{\part}{\bfseries}{\bfseries\LARGE}{}{}
\patchcmd{\section}{\scshape}{\bfseries}{}{}\renewcommand{\@secnumfont}{\bfseries} 
\patchcmd{\@settitle}{\uppercasenonmath\@title}{\large}{}{}
\patchcmd{\@setauthors}{\MakeUppercase}{}{}{}
\DeclareRobustCommand{\gobblefour}[5]{}    
\DeclareFontFamily{OT1}{pzc}{}                                
\DeclareFontShape{OT1}{pzc}{m}{it}{<-> s * [1.10] pzcmi7t}{}
\DeclareMathAlphabet{\mathpzc}{OT1}{pzc}{m}{it}
\DeclareSymbolFont{sfoperators}{OT1}{bch}{m}{n} \DeclareSymbolFontAlphabet{\mathsf}{sfoperators} \makeatletter\def\operator@font{\mathgroup\symsfoperators}\makeatother 
\DeclareSymbolFont{cmletters}{OML}{cmm}{m}{it}              
\DeclareSymbolFont{cmsymbols}{OMS}{cmsy}{m}{n}
\DeclareSymbolFont{cmlargesymbols}{OMX}{cmex}{m}{n}
\DeclareMathSymbol{\myjmath}{\mathord}{cmletters}{"7C}     \let\jmath\myjmath 
\DeclareMathSymbol{\myamalg}{\mathbin}{cmsymbols}{"71}     \let\amalg\myamalg
\DeclareMathSymbol{\mycoprod}{\mathop}{cmlargesymbols}{"60}\let\coprod\mycoprod
\DeclareMathSymbol{\myalpha}{\mathord}{cmletters}{"0B}     \let\alpha\myalpha 
\DeclareMathSymbol{\mybeta}{\mathord}{cmletters}{"0C}      \let\beta\mybeta
\DeclareMathSymbol{\mygamma}{\mathord}{cmletters}{"0D}     \let\gamma\mygamma
\DeclareMathSymbol{\mydelta}{\mathord}{cmletters}{"0E}     \let\delta\mydelta
\DeclareMathSymbol{\myepsilon}{\mathord}{cmletters}{"0F}   \let\epsilon\myepsilon
\DeclareMathSymbol{\myzeta}{\mathord}{cmletters}{"10}      \let\zeta\myzeta
\DeclareMathSymbol{\myeta}{\mathord}{cmletters}{"11}       \let\eta\myeta
\DeclareMathSymbol{\mytheta}{\mathord}{cmletters}{"12}     \let\theta\mytheta
\DeclareMathSymbol{\myiota}{\mathord}{cmletters}{"13}      \let\iota\myiota
\DeclareMathSymbol{\mykappa}{\mathord}{cmletters}{"14}     \let\kappa\mykappa
\DeclareMathSymbol{\mylambda}{\mathord}{cmletters}{"15}    \let\lambda\mylambda
\DeclareMathSymbol{\mymu}{\mathord}{cmletters}{"16}        \let\mu\mymu
\DeclareMathSymbol{\mynu}{\mathord}{cmletters}{"17}        \let\nu\mynu
\DeclareMathSymbol{\myxi}{\mathord}{cmletters}{"18}        \let\xi\myxi
\DeclareMathSymbol{\mypi}{\mathord}{cmletters}{"19}        \let\pi\mypi
\DeclareMathSymbol{\myrho}{\mathord}{cmletters}{"1A}       \let\rho\myrho
\DeclareMathSymbol{\mysigma}{\mathord}{cmletters}{"1B}     \let\sigma\mysigma
\DeclareMathSymbol{\mytau}{\mathord}{cmletters}{"1C}       \let\tau\mytau
\DeclareMathSymbol{\myupsilon}{\mathord}{cmletters}{"1D}   \let\upsilon\myupsilon
\DeclareMathSymbol{\myphi}{\mathord}{cmletters}{"1E}       \let\phi\myphi
\DeclareMathSymbol{\mychi}{\mathord}{cmletters}{"1F}       \let\chi\mychi
\DeclareMathSymbol{\mypsi}{\mathord}{cmletters}{"20}       \let\psi\mypsi
\DeclareMathSymbol{\myomega}{\mathord}{cmletters}{"21}     \let\omega\myomega
\DeclareMathSymbol{\myvarepsilon}{\mathord}{cmletters}{"22}\let\varepsilon\myvarepsilon
\DeclareMathSymbol{\myvartheta}{\mathord}{cmletters}{"23}  \let\vartheta\myvartheta
\DeclareMathSymbol{\myvarpi}{\mathord}{cmletters}{"24}     \let\varpi\myvarpi
\DeclareMathSymbol{\myvarrho}{\mathord}{cmletters}{"25}    \let\varrho\myvarrho
\DeclareMathSymbol{\myvarsigma}{\mathord}{cmletters}{"26}  \let\varsigma\myvarsigma
\DeclareMathSymbol{\myvarphi}{\mathord}{cmletters}{"27}    \let\varphi\myvarphi
\DeclareMathOperator{\Spec}{Spec}
\DeclareMathOperator{\Hom}{Hom}
\DeclareMathOperator{\Proj}{Proj}
\DeclareMathOperator{\Mat}{Mat}
\DeclareMathOperator{\Fl}{Fl}
\DeclareMathOperator{\Gr}{Gr}
\DeclareMathOperator{\GL}{GL}
\DeclareMathOperator{\cFl}{\mathpzc{Fl}}
\DeclareMathOperator{\OBlpr}{{OBlpr}}
\DeclareMathOperator{\Tracts}{{Tracts}}
\DeclareMathOperator*{\hypersum}{\,\raisebox{-2.2pt}{\larger[2]{$\boxplus$}}\,}
\DeclareMathOperator{\sign}{{sign}}
\DeclareMathOperator{\pl}{{pl}}
\DeclareMathOperator{\rk}{{rk}}
\newcommand\A{{\mathbb A}}
\newcommand\C{{\mathbb C}}
\newcommand\F{{\mathbb F}}
\newcommand\K{{\mathbb K}}
\newcommand\N{{\mathbb N}}
\renewcommand\P{{\mathbb P}}
\newcommand\R{{\mathbb R}}
\renewcommand\S{{\mathbb S}}
\newcommand\T{{\mathbb T}}
\newcommand\V{{\mathbb V}}
\newcommand\Z{{\mathbb Z}}
\newcommand\bI{{\mathbf I}}
\newcommand\bJ{{\mathbf J}}
\newcommand\bM{{\mathbf M}}
\newcommand\bN{{\mathbf N}}
\newcommand\bi{{\mathbf{i}}}
\newcommand\br{{\mathbf{r}}}
\newcommand\cB{{\mathcal B}}
\newcommand\cC{{\mathcal C}}
\newcommand\cD{{\mathcal D}}
\newcommand\cL{{\mathcal L}}
\newcommand\cO{{\mathcal O}}
\newcommand\cV{{\mathcal V}}
\newcommand\cW{{\mathcal W}}
\newcommand\cX{{\mathcal X}}
\newcommand\fr{{\mathfrak r}}
\newcommand\Fun{{\F_1}}
\newcommand\Funpm{{\F_1^\pm}}
\renewcommand\int{\textup{int}}
\newcommand\id{\textup{id}}
\newcommand\oblpr{\textup{oblpr}}
\newcommand\tract{\textup{tract}}
\newcommand\univ{\textup{univ}}
\renewcommand\geq{\geqslant}
\renewcommand\leq{\leqslant}
\newcommand{\gen}[1]{\langle #1 \rangle}
\newcommand{\bpquot}[2]{#1\!\sslash\!#2}
\newcommand{\hyperplus}{{\,\raisebox{-1.1pt}{\larger[-0]{$\boxplus$}}\,}}
\newcommand{\minor}[2]{\backslash #1 / #2}
\newcommand{\supp}{\textup{supp}}
\title{Flag matroids with coefficients}
\author{Manoel Jarra}
\address{\rm Manoel Jarra, University of Groningen, the Netherlands, and IMPA, Rio de Janeiro, Brazil}
\email{{manoel.jarra@impa.br}}
\author{Oliver Lorscheid}
\address{\rm Oliver Lorscheid, University of Groningen, the Netherlands, and IMPA, Rio de Janeiro, Brazil}
\email{{oliver@impa.br}}
\begin{document}

\begin{abstract}
 This paper is a direct generalization of Baker-Bowler theory to flag matroids, including its moduli interpretation as developed by Baker and the second author for matroids. More explicitly, we extend the notion of flag matroids to flag matroids over any tract, provide cryptomorphic descriptions in terms of basis axioms (Grassmann-Pl\"ucker functions), circuit/vector axioms and dual pairs, including additional characterizations in the case of perfect tracts. We establish duality of flag matroids and construct minors. Based on the theory of ordered blue schemes, we introduce flag matroid bundles and construct their moduli space, which leads to algebro-geometric descriptions of duality and minors. Taking rational points recovers flag varieties in several geometric contexts: over (topological) fields, in tropical geometry, and as a generalization of the MacPhersonian.
\end{abstract}


\maketitle

\begin{small} \tableofcontents \end{small}

\VerbatimFootnotes   
\thispagestyle{empty} 


\section*{Introduction}
\label{introduction}
Flag matroids appeared in different disguises---as sequences of strong maps in \cite{Higgs68,Kennedy75,Cheung-Crapo76,Kung77}, as particular cases of Coxeter matroids in \cite{Gelfand-Serganova87}, and implicitly as MacPhersonians in \cite{Mnev-Ziegler93,Babson94}---before their proper name was introduced and before they were systematically studied in the papers \cite{Borovik-Gelfand-Vince-White97,Borovik-Gelfand-White00,Borovik-Gelfand-White01,Borovik-Gelfand-Stone02} by Borovik, Gelfand, Stone, Vince and White (in varying constellations), culminating in an extended chapter in \cite{Borovik-Gelfand-White03}, which summarizes several cryptomorphic descriptions. Duality for flag matroids is developed in \cite{Garcia18} and minors are discussed in \cite[Thm. 4.1.5]{Brandt-Eur-Zhang21}. Other works on flag matroids include \cite{deMier07,Cameron-Dinu-Michalek-Seynnaeve18,Benedetti-Chavez-Tamayo19,Dinu-Eur-Seynnaeve21,Fujishige-Hirai22,Benedetti-Knauer22}.

A generalization of matroids of a different flavour are matroids with coefficients. The first type of such enriched matroids to appear in the literature are oriented matroids, as introduced by Bland and Las Vergnas in \cite{Bland-LasVergnas78}, which have strong ties to real geometry. Dress and Wenzel provide in \cite{Dress86,Dress-Wenzel91,Dress-Wenzel92} a vast generalization with their theory of matroids over fuzzy rings. A particular instance are valuated matroids, as introduced in \cite{Dress-Wenzel92b}, which are omnipresent in tropical geometry nowadays. Later Baker and Bowler streamlined and generalized Dress and Wenzel's theory to matroids over tracts in \cite{Baker-Bowler19}. Baker-Bowler theory encompasses cryptomorphic description of matroids over tracts in terms of Grassmann-Pl\"ucker functions, dual pairs and circuits, as well as vectors (see \cite{Anderson19}), and treats duality and minors.

To our best knowledge, the only types of flag matroids with coefficients---other than usual flag matroids (trivial coefficients) and flags of linear subspaces (coefficients in a field)---are valuated flag matroids (coefficients in the tropical hyperfield), which appear as flags of tropical linear spaces in \cite{Brandt-Eur-Zhang21,Boretsky21,Joswig-Loho-Luber-Olarte21,Boretsky-Eur-Williams22}, and, implicitly, oriented flag matroids (coefficients in the sign hyperfield) as points of MacPhersonians; see \cite{Mnev-Ziegler93,Anderson98,Bjorner-LasVergnas-Sturmfels-White-Ziegler99}.


\subsection*{Summary of results}
In this paper, we extend the notion of flag matroids to flag $F$-matroids for any tract $F$, and we exhibit cryptomorphic axiomatizations in terms of Grassmann-Pl\"ucker functions, dual pairs and circuits / vectors, as well as some additional descriptions in the case of perfect tracts. We also establish duality and minors. We explain all these aspects in terms of geometric constructions for the moduli space of flag matroids, which can be thought of as a flag variety over the {regular partial field $\Funpm$}.

\subsection*{Flag matroids}
Let $E=\{1,\dotsc,n\}$ and $\br=(r_1,\dotsc,r_s)$ with $0\leq r_1\leq\dotsc\leq r_s\leq n$.\footnote{We allow for $r_i=r_{i+1}$ since strict inequalities $r_i<r_{i+1}$ are not preserved under contractions and deletions.} A \emph{flag matroid of rank $\br$ on $E$} is a sequence $\bM=(M_1,\dotsc,M_s)$ of matroids $M_i$ of rank $r_i$ on $E$ such that every flat of $M_i$ is a flat of $M_{i+1}$ for $i=1,\dotsc,s-1$. We also say that \emph{$M_i$ is a quotient of $M_{i+1}$}, and write $M_{i+1}\twoheadrightarrow M_i$, in this case.

\begin{ex*}[Flags of matroid minors]
 A primary example of a flag matroid on $E$ is of the following form. Let $0\leq n_s \leq  \dotsc \leq n_1 \leq p$ and $n'=n+p$ be integers and $M'$ be a matroid on $E'=\{1,\dotsc,n'\}$. We define $M_i=M'\minor{I_i}{J_i}$ with $J_i=\{n+1,\dotsc,n+n_i\}$ and $I_i=\{n+n_i+1,\dotsc,n'\}$ for $i=1,\dotsc,s$. Then $(M_1,\dotsc,M_s)$ is a flag matroid and, in fact, every flag matroid is of this form; see \cite{Kung77}.
\end{ex*}

For the purpose of this introduction, we assume that the reader is familiar with tracts and Baker-Bowler theory; we refer to \autoref{section: Baker-Bowler theory} for a summary. Let $\K$ be the \emph{Krasner hyperfield}, whose incarnation as a tract is given by its unit group $\K^\times=\{1\}$ and its nullset $N_\K=\N-\{1\}$. Using Baker and Bowler's reinterpretation of a matroid $M$ of rank $r$ on $E$ as a $\K^\times$-class of a Grassmann-Pl\"ucker function $\varphi:E^r\to\K$ (mapping bases to $1$ and dependent sets to $0$) leads us towards the following cryptomorphism for flag matroids, which
is \autoref{prop: common flag matroids} and which is essentially known, as explained in \autoref{rem: crypto with base exchange}.

\begin{propA}\label{propA}
 Let $\bM=(M_1,\dotsc,M_s)$ be a sequence of matroids $M_i$ with respective Grassmann-Pl\"ucker functions $\varphi_i:E^{r_i}\to \K$. Then $\bM$ is a flag matroid if and only if for all $1\leq i\leq j\leq s$ and $x_1,\dotsc,x_{r_i-1},y_1,\dotsc,y_{r_j+1}\in E$,
 \[
  \sum_{k=1}^{r_j+1} \ \epsilon^k \, \varphi_i(y_k,x_1,\dotsc,x_{r_i-1}) \, \varphi_j(y_1,\dotsc,\widehat{y_k},\dotsc,y_{r_j+1})
 \]
 is in the nullset $N_\K$ where $\epsilon=1$ in this case.
\end{propA}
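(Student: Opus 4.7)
My plan is a two-step reduction followed by direct combinatorial arguments for each direction of the pair version. The easy first step exploits transitivity of ``every flat of $\cdot$ is a flat of $\cdot$'' to see that $\bM$ is a flag matroid if and only if $M_j \twoheadrightarrow M_i$ for every $i \leq j$, while the $i=j$ instance of the displayed sum is just the Pl\"ucker relation for $\varphi_i$, which holds by the matroid hypothesis on $M_i$. Thus the statement reduces to the pair version: for matroids $M, N$ on $E$ of ranks $r \leq s$ with Grassmann-Pl\"ucker functions $\varphi_M, \varphi_N$, we have $N \twoheadrightarrow M$ iff
\[
 \sum_{k=1}^{s+1} \varphi_M(y_k, x_1, \dots, x_{r-1}) \, \varphi_N(y_1, \dots, \widehat{y_k}, \dots, y_{s+1}) \in N_\K
\]
for all tuples $X = (x_1, \dots, x_{r-1})$ and $Y = (y_1, \dots, y_{s+1})$ in $E$. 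Unpacked over $\K$, the condition says: the number of $k$ with $\{y_k\}\cup X$ an $M$-basis and $Y\setminus\{y_k\}$ an $N$-basis is never exactly $1$.

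For the forward direction I would use the fact that a size-$(s+1)$ subset $Y$ of $E$ containing an $N$-basis has $N$-rank $s$ and therefore contains a unique $N$-circuit $C$; the $N$-basis condition at index $k$ is equivalent to $y_k \in C$. Assume some index $k_0$ satisfies both conditions and put $F = \mathrm{cl}_M(X)$, an $M$-flat with $X \subseteq F$ and $y_{k_0}\notin F$. If $k_0$ were the unique such index, then for every other $y_k\in C$ the $M$-basis condition must fail, forcing $y_k\in F$, whence $C\setminus\{y_{k_0}\}\subseteq F$. Then $y_{k_0}\in\mathrm{cl}_N(C\setminus\{y_{k_0}\})\subseteq\mathrm{cl}_N(F)$; but $N\twoheadrightarrow M$ makes $F$ an $N$-flat, so $\mathrm{cl}_N(F) = F$, forcing $y_{k_0}\in F$, a contradiction.

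The converse I would handle contrapositively. If some $M$-flat $F$ fails to be an $N$-flat, pick $y\in\mathrm{cl}_N(F)\setminus F$. Choose an $M$-basis $I$ of $F$; since $y\notin F = \mathrm{cl}_M(I)$, the set $I\cup\{y\}$ is $M$-independent and extends to an $M$-basis $B_M$ of $E$. Choose an $N$-basis $B_F$ of $F$ extended to an $N$-basis $B_N$ of $E$; one has $y\notin B_N$ because $B_F\cup\{y\}\subseteq B_N$ would be $N$-independent yet $y\in\mathrm{cl}_N(B_F)$ makes it $N$-dependent. Set $X = B_M\setminus\{y\}$ and $Y = B_N\cup\{y\}$, placing $y$ as $y_{s+1}$. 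The term $k=s+1$ contributes $\varphi_M(y,X)\,\varphi_N(B_N) = 1$. For $k\leq s$ we have $y_k\in B_N$, and $B_N\setminus B_F\subseteq E\setminus F$ (else $B_F$ would not be a basis of $F$ in $N$), so the split $B_N = B_F\sqcup(B_N\setminus B_F)$ exhausts the cases: if $y_k\in B_F$, then $X\supseteq I$ gives $y_k\in F\subseteq\mathrm{cl}_M(X)$, killing $\varphi_M(y_k,X)$; if $y_k\in B_N\setminus B_F$, then $B_N\setminus\{y_k\}\supseteq B_F$ gives $y\in\mathrm{cl}_N(B_N\setminus\{y_k\})$, killing $\varphi_N(Y\setminus\{y_k\})$. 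Hence the sum equals $1\notin N_\K$, contradicting the hypothesis.

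I expect the main obstacle to be the case analysis in the converse: the nested bases $I\subseteq B_M$ and $B_F\subseteq B_N$ must be set up so that two distinct cancellation mechanisms (on the $M$-side for $y_k\in F$, on the $N$-side for $y_k\notin F$) together kill every non-distinguished term. A small edge case is that $F\neq E$, automatic since $E$ is always an $N$-flat, which ensures $r_M(F)\leq r-1$ so that $I$ fits inside $X$. The forward direction is more conceptual, relying on the uniqueness of the $N$-circuit inside $Y$ together with the flat characterization of quotients.
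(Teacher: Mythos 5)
Your proof is correct, but it takes a genuinely different route from the paper's. The paper obtains this statement (its \autoref{prop: common flag matroids}) as a specialization of the general cryptomorphism over an arbitrary tract, \autoref{thm-cryptomorphism}: a quotient $N\twoheadrightarrow M$ in the Pl\"ucker-flag-relation sense is equivalent to $\cC^\ast(M)\subseteq\cV^\ast(N)$, and over $\K$ this is matched with the classical notion of quotient via Anderson's identification of $\K$-covectors with unions of cocircuits and Kung's cocircuit characterization of strong maps; composability of strong maps then handles the passage from consecutive pairs to all pairs $i\leq j$, exactly as your first reduction does. You instead prove the pairwise statement directly and purely combinatorially over $\K$: the forward direction via the unique $N$-circuit of a spanning $(s+1)$-set together with the observation that the $M$-flat $\mathrm{cl}_M(X)$ must be an $N$-flat, and the converse via the nested bases $I\subseteq B_M$ and $B_F\subseteq B_N$ and an element $y\in\mathrm{cl}_N(F)\setminus F$, which produces a sum with exactly one nonzero term; both directions check out, including the degenerate subcase $C=\{y_{k_0}\}$ and the edge case $F\neq E$ that you flag. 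What the paper's route buys is uniformity over all tracts and reuse of machinery it needs anyway; what yours buys is a self-contained elementary argument that needs neither \autoref{thm-cryptomorphism} nor the external results of Anderson and Kung---in effect you reprove the classical flat/basis characterization of matroid quotients from scratch. One small point to add in a final write-up: you tacitly treat the tuples $x_1,\dotsc,x_{r-1}$ and $y_1,\dotsc,y_{s+1}$ as sets of distinct elements; since $\epsilon=1$ in $\K$ and $\varphi$ depends only on the underlying set, a repeated entry makes the relevant terms vanish or occur in equal pairs, so the number of nonzero terms is again never exactly one---a one-line remark closes this, and the paper's own proof of \autoref{thm-cryptomorphism} makes the same tacit reduction to subsets.
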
 

This reinterpretation is amenable to a generalization to flag matroids over tracts.\footnote{Our definition of flag $F$-matroids extends Baker and Bowler's definition of a \emph{strong $F$-matroid}. It is perceivable that there is also a satisfactory theory of \emph{weak flag $F$-matroids}. We chose to work with strong flag $F$-matroids since the work \cite{Anderson19} by Anderson and \cite{Baker-Lorscheid21} by Baker and the second author show that strong matroids are better behaved with respect to vector axioms and moduli spaces.}

\begin{df*}
 Let $F$ be a tract. A \emph{flag $F$-matroid of rank $\br$ on $E$} is a sequence $\bM=(M_1,\dotsc,M_s)$ of $F$-matroids $M_i$ such that any choice of Grassmann-Pl\"ucker functions $\varphi_i:E^{r_i}\to F$ with $M_i=[\varphi_i]$ for $i=1,\dotsc,s$ satisfies the \emph{Pl\"ucker flag relations}
 \[
  \sum_{k=1}^{r_j+1} \ \epsilon^k \, \varphi_i(y_k,x_1,\dotsc,x_{r_i-1}) \, \varphi_j(y_1,\dotsc,\widehat{y_k},\dotsc,y_{r_j+1}) \quad \in \quad N_F
 \]
 for all $1\leq i\leq j\leq s$ and $x_1,\dotsc,x_{r_i-1},y_1,\dotsc,y_{r_j+1}\in E$.
\end{df*}

The stepping stone from which our theory of flag $F$-matroids lifts off the ground is the extension of Baker and Bowler's cryptomorphic description of $F$-matroids to flag $F$-matroids. Given an $F$-matroid $M$, we denote by $\cC^\ast(M)$ its set of cocircuits and by $\cV^\ast(M)$ its set of covectors. The following is \autoref{thm-cryptomorphism}.

\begin{thmA}\label{thmB}
 A sequence $\bM=(M_1,\dotsc,M_s)$ of $F$-matroids is a flag $F$-matroid if and only if $\cC^\ast(M_i)\subset\cV^\ast(M_j)$ for all $1\leq i\leq j\leq n$.
\end{thmA}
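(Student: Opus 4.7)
The proof rests on two characterizations from the preceding Baker--Bowler theory for a single $F$-matroid. First, every cocircuit of $M_i$ admits a presentation $Y(e)=c\cdot\varphi_i(e,x_1,\dotsc,x_{r_i-1})$ for some $c\in F^\times$ and some $(x_1,\dotsc,x_{r_i-1})\in E^{r_i-1}$; nontrivial cocircuits arise precisely when the $x_k$ span a hyperplane of the underlying matroid, while dependent tuples yield $Y\equiv 0$. Second, for the strong $F$-matroid $M_j$ with Grassmann--Pl\"ucker function $\varphi_j$, an element $Y\in F^E$ lies in $\cV^\ast(M_j)$ if and only if
\[ \sum_{k=1}^{r_j+1}\epsilon^k\,Y(y_k)\,\varphi_j(y_1,\dotsc,\widehat{y_k},\dotsc,y_{r_j+1})\;\in\;N_F \]
for every $(y_1,\dotsc,y_{r_j+1})\in E^{r_j+1}$. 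This GP reformulation of the covector axioms is the technical core of Baker--Bowler's cryptomorphism for a single $F$-matroid and (I expect) has already been established earlier in the paper.

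Once these are available, the equivalence is a direct comparison of identities. For $(\Rightarrow)$, assume $\bM$ is a flag $F$-matroid and take $Y\in\cC^\ast(M_i)$ with presentation as above; substituting into the Pl\"ucker flag relation for the pair $(i,j)$ and rescaling by $c^{-1}$ produces exactly the GP-covector condition for $Y$ relative to $M_j$, so $Y\in\cV^\ast(M_j)$. Conversely, for $(\Leftarrow)$, fix $x_1,\dotsc,x_{r_i-1}\in E$ and set $Y(e):=\varphi_i(e,x_1,\dotsc,x_{r_i-1})$. When the $x_k$ span a hyperplane, $Y$ is a cocircuit of $M_i$, hence a covector of $M_j$ by hypothesis, and the GP-covector condition for $Y$ is word-for-word the Pl\"ucker flag relation; when the $x_k$ are dependent, $Y\equiv 0$ and the relation is trivially in $N_F$. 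The diagonal case $i=j$ reduces to the ordinary Grassmann--Pl\"ucker relations of $M_j$, which hold because $M_j$ is an $F$-matroid.

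\textbf{Main obstacle.} The only real content is the GP-characterization of covectors, which is where the \emph{strong} $F$-matroid axioms enter: without strength, the cocircuit--circuit pairing only yields 3-term Pl\"ucker relations rather than the full modular relation displayed above, and the equivalence would fail. I expect this characterization to be part of the paper's review of Baker--Bowler theory, or else to be imported from \cite{Anderson19} or \cite{Baker-Lorscheid21}. A minor extra step is required if $\cC^\ast$ denotes the whole cocycle set rather than only the cocircuits: one must additionally check that the GP-covector condition for $M_j$ is inherited by the modular sums generating cocycles of $M_i$ from its cocircuits, which follows again from the same GP-characterization applied to hypersum operations.
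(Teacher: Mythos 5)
Your proposal is correct and follows essentially the same route as the paper: the paper likewise reduces to a single pair $N\twoheadrightarrow M$ and proves both directions by identifying the flag Pl\"ucker sums with inner products of cocircuits of $M$ against circuits of $N$, which is precisely the content of your two background lemmas (the hyperplane presentation of cocircuits and the Grassmann--Pl\"ucker characterization of covectors). Be aware, however, that neither lemma is actually stated in the paper's review of Baker--Bowler theory -- the paper derives them inline from the circuit presentation and the formula for $\varphi^\ast$, carrying out the sign and involution bookkeeping explicitly (the cocircuit is $c\cdot\overline{\varphi_i(\,\cdot\,,x_1,\dotsc,x_{r_i-1})}$ and the conjugations cancel against those in the inner product) -- so a complete write-up must supply that computation rather than cite it.
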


It's notable that the circuit characterization of flag $F$-matroids is, in fact, the analog of Baker-Bowler's dual pair characterization since $\cC^\ast(M_i)\subset\cV^\ast(M_j)$ says nothing less than that the circuit set $\cC(M_j)$ of $M_j$ is orthogonal to $\cC^\ast(M_i)$.

For perfect tracts, such as $\K$, $\T$ and (partial) fields, we find the following, \emph{a priori} different, characterizations of flag matroids (see \autoref{thm: cryptomorphisms for perfect tracts}), which reflect the more common descriptions of flags of linear subspaces over a field. 

\begin{thmA}\label{thmC}
 Let $F$ be a perfect tract and $\bM=(M_1,\dotsc,M_s)$ a sequence of matroids. Then the following are equivalent:
 \begin{enumerate}
  \item $\bM$ is a flag $F$-matroid;
  \item $\bM$ satisfies the {Pl\"ucker flag relations}
   \[
    \sum_{k=1}^{r_{j}+1} \ \epsilon^k \, \varphi_i(y_k,x_1,\dotsc,x_{r_i-1}) \, \varphi_{j}(y_1,\dotsc,\widehat{y_k},\dotsc,y_{r_{j}+1}) \quad \in \quad N_F
   \]
   for all $j=i+1\in\{2,\dotsc,s\}$ and $x_1,\dotsc,x_{r_i-1},y_1,\dotsc,y_{r_{j}+1}\in E$;
  \item $\cV^\ast(M_1)\subset \dotsb\subset\cV^\ast(M_s)$. 
 \end{enumerate}
\end{thmA}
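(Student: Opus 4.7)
The plan is to prove the cyclic chain of implications $(1)\Rightarrow(2)\Rightarrow(3)\Rightarrow(1)$.

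The implication $(1)\Rightarrow(2)$ is immediate: condition $(2)$ is nothing but the restriction of the defining Plücker flag relations of a flag $F$-matroid to consecutive index pairs $j=i+1$.

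For $(3)\Rightarrow(1)$, I would invoke \autoref{thmB}. Given the chain $\cV^\ast(M_1)\subset\cdots\subset\cV^\ast(M_s)$, for every $1\leq i\leq j\leq s$ one has
\[
 \cC^\ast(M_i)\ \subset\ \cV^\ast(M_i)\ \subset\ \cV^\ast(M_j),
\]
where the first inclusion is the universal fact that cocycles are covectors and the second is obtained by chaining the hypothesis. The dual pair characterization of \autoref{thmB} then yields that $\bM$ is a flag $F$-matroid.

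The substantive step is $(2)\Rightarrow(3)$. First I observe that for each $i<s$ the length-two sequence $(M_i,M_{i+1})$ is itself a flag $F$-matroid: the Plücker flag relations it must satisfy are the internal ones for $M_i$ and $M_{i+1}$ (automatic, since they are $F$-matroids) together with the cross relations for the pair $(i,i+1)$, which are exactly what $(2)$ supplies. Applying \autoref{thmB} to $(M_i,M_{i+1})$ then gives $\cC^\ast(M_i)\subset\cV^\ast(M_{i+1})$. The remaining task is to upgrade this cocycle-level inclusion to the covector-level inclusion $\cV^\ast(M_i)\subset\cV^\ast(M_{i+1})$; chaining over $i$ then yields $(3)$. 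This upgrade is precisely where the perfection hypothesis enters: over a perfect tract, the covector set of an $F$-matroid is the closure of its cocycle set under the relevant hypersum/modular-elimination operation, and a covector set is itself stable under that operation; hence the closure of $\cC^\ast(M_i)$ sits inside $\cV^\ast(M_{i+1})$, delivering $\cV^\ast(M_i)\subset\cV^\ast(M_{i+1})$.

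The main obstacle, as expected, is the generation/closure statement used in $(2)\Rightarrow(3)$: the reduction from covectors to cocycles is the key property distinguishing perfect tracts and is the substantive content behind why covector-level and cocycle-level dual pair conditions coincide in that setting. I expect this to be handled by quoting the corresponding cryptomorphism from the Baker--Bowler/Anderson theory recalled in \autoref{section: Baker-Bowler theory}, after which the proof of \autoref{thmC} reduces to the inclusion-chasing above together with applications of \autoref{thmB}.
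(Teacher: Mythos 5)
Your overall architecture is sound, and two of the three implications are exactly the paper's: $(1)\Rightarrow(2)$ is trivial, and $(3)\Rightarrow(1)$ via $\cC^\ast(M_i)\subseteq\cV^\ast(M_i)\subseteq\cV^\ast(M_j)$ together with \autoref{thmB} is precisely how the paper argues. The reduction of $(2)\Rightarrow(3)$ to upgrading $\cC^\ast(M_i)\subseteq\cV^\ast(M_{i+1})$ (obtained from \autoref{thmB} applied to the pair $(M_i,M_{i+1})$) to $\cV^\ast(M_i)\subseteq\cV^\ast(M_{i+1})$ is also the right target.

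The gap is in how you carry out that upgrade. You invoke the principle that ``over a perfect tract, the covector set is the closure of the cocycle set under the relevant hypersum/modular-elimination operation, and covector sets are stable under that operation.'' No such statement is available: it is not the definition of perfection (which is the orthogonality $\cV(M)\perp\cV^\ast(M)$), it is not recalled in \autoref{section: Baker-Bowler theory}, and it is not a known theorem for general perfect tracts --- indeed a general tract has no hyperaddition, so ``hypersum closure'' is not even defined; the cocircuit-generation statements you have in mind are special facts about $\K$, $\S$, $\T$ and fields. The correct use of perfection is an orthogonal-complement argument, which is what the paper does (via \autoref{prop-duality}): from $M_{i+1}\twoheadrightarrow M_i$ one gets $M_i^\ast\twoheadrightarrow M_{i+1}^\ast$, hence $\cC(M_{i+1})\subseteq\cV(M_i)$ by \autoref{thm-cryptomorphism}, and then $\cV^\ast(M_i)\subseteq\cV(M_i)^\perp\subseteq\cC(M_{i+1})^\perp=\cV^\ast(M_{i+1})$, where the first inclusion is exactly the perfection hypothesis. (Equivalently, starting from your inclusion: $\cC^\ast(M_i)\subseteq\cV^\ast(M_{i+1})=\cC(M_{i+1})^\perp$ gives $\cC(M_{i+1})\subseteq\cC^\ast(M_i)^\perp=\cV(M_i)$, and one concludes as above.) With that replacement your proof closes; as written, the pivotal step rests on an unproved and, in this generality, unavailable claim.
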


\begin{ex*}[Flags of linear subspaces as flag $K$-matroids]
 As a tract, a field $K$ consists of the unit group $K^\times$ and the nullset $N_K=\{\sum a_i\in\N[K^\times]\mid \sum a_i=0\text{ as elements of }K\}$. By \autoref{thmC}, a $K$-matroid is a sequence $\bM=(M_1,\dotsc,M_s)$ of $K$-matroids whose associated covector sets $\cV^\ast(M_i)$ form a chain $\cV^\ast(M_1)\subset\dotsb\subset\cV^\ast(M_s)$ of linear subspaces of $K^E$ with $\dim\cV^\ast(M_i)=r_i$. For more details, see \autoref{prop: flag matroids over fields}.
\end{ex*}

\begin{ex*}[Tropical flag matroids as flag $\T$-matroids]
 As a tract, the tropical hyperfield $\T$ consists of the (multiplicative) unit group $\T^\times=\R_{>0}$ and the nullset 
 \[
  N_\T \ = \ \big\{a_1+\dotsc+a_n\in\N[\R_{>0}] \, \big| \, \text{the maximum occurs twice in }a_1,\dotsc,a_n\big\}.
 \]
 An immediate consequence of this definition is that the bend locus\footnote{Note that we use the Berkovich convention for the tropical semifield $\overline{\R}=\R_{\geq0}$ with addition $\max$ and the usual multiplication of real numbers.} $\cB(f)\subset\overline{\R}^n$ of a tropical polynomial $f(x_1,\dotsc,x_n)=\sum c_{(e_1,\dotsc,e_n)}x_1^{e_1}\dotsb x_n^{e_n}$ agrees with tropical points $(a_1,\dotsc,a_n)\in\T^n$ for which $f(a_1,\dotsc,a_n)\in N_\T$.
 
 Brandt, Eur and Zhang describe \emph{valuated flag matroids} in different disguises: one of them (\cite[Prop.\ 4.2.3]{Brandt-Eur-Zhang21}) identifies them as points of the \emph{flag Dressian}, which is defined by the very same equations that go under the name of Pl\"ucker flag relations in our text. This shows that a valuated flag matroid after Brandt, Eur and Zhang is the same thing as a flag $\T$-matroid in our sense. Since $\T$ is perfect and since a tropical linear space is the covector set of a valuated matroid, \autoref{thmC} identifies a flag $\T$-matroid with a chain $V_1\subset\dotsb\subset V_s$ of tropical linear spaces, which recovers \cite[Thm.\ 4.3.1]{Brandt-Eur-Zhang21}. For more details, see \autoref{prop: valuated matroids}.
\end{ex*}

\begin{ex*}[Flag $\Funpm$-matroids]
 In its incarnation as a tract, the regular partial field $\Funpm$ is given by its unit group $(\Funpm)^\times=\{1,\epsilon\}$ and its nullset $N_\Funpm=\{n.1+n.\epsilon\mid n\in\N\}$. Since $\Funpm$ is perfect, \autoref{thmC} implies that a flag $\Funpm$-matroid is a sequence $\bM=\big([\varphi_1],\dotsc,[\varphi_s]\big)$ of $(\Funpm)^\times$-classes of non-trivial functions $\varphi_i:E^{r_i}\to\Funpm$ for which
 \[
  \sum_{k=1}^{r_{i+1}+1} \ \epsilon^k \, \varphi_i(y_k,x_1,\dotsc, x_{r_i-1}) \,\varphi_{i+1}(y_1,\dotsc,\widehat{y_k},\dotsc,y_{r_{i+1}+1}) \quad \in \quad N_{\Funpm}
 \]
 for $i=1,\dotsc,s-1$ and all $x_1,\dotsc,x_{r_i-1},y_1,\dotsc,y_{r_{i+1}+1}\in E$. Since every regular matroid is represented by a unimodular matrix, whose rows span the set of covectors (considered as elements of $\Z^E$ with coefficients $0$, $1$ and $-1$), a sequence $(A_1,\dotsc,A_s)$ of unimodular $r_i\times n$-matrices $A_i$ represents a regular flag matroid if and only if the row space of $A_i$ is contained in the row space of $A_{i+1}$ for $i=1,\dotsc,s-1$.
\end{ex*}

\begin{ex*}[Oriented flag matroids]
 The \emph{sign hyperfield $\S$} is the tract with unit group $\S^\times=\{1,\epsilon\}$ and nullset $N_\S=\{n.1+m.\epsilon\mid n=m=0\text{ or }n\neq0\neq m\}$. Since $\S$ is perfect, we can invoke \autoref{thmC} and describe a flag $\S$-matroid, or \emph{oriented flag matroid}, as a sequence $\bM=(M_1,\dotsc,M_s)$ of oriented matroids $M_i$ that are represented by Grassmann-Pl\"ucker functions, or \emph{chirotopes}, $\varphi_i:E^{r_i}\to\S$ that satisfy
 \[
  \sum_{k=1}^{r_{i+1}+1} \ \epsilon^k \, \varphi_i(y_k,x_1,\dotsc, x_{r_i-1}) \,\varphi_{i+1}(y_1,\dotsc,\widehat{y_k},\dotsc,y_{r_{i+1}+1}) \quad \in \quad N_{\S}
 \]
 for $i=1,\dotsc,s-1$ and all $x_1,\dotsc,x_{r_i-1},y_1,\dotsc,y_{r_{i+1}+1}\in E$. Equivalently, a sequence $\bM=(M_1,\dotsc,M_s)$ of oriented matroids is an oriented flag matroid if every cocircuit of $M_i$ is in the span of the cocircuits of $M_{i+1}$ for $i=1,\dotsc,s-1$.
 
 Note that in our terminology, the MacPhersonian $\textup{MacPh}(r,N)$ of a rank $w$ matroid $N$ consists of all oriented flag matroids $(M,N)$ of type $(r,w)$.
\end{ex*}

Duality, minors of flag matroids and push forwards can be directly derived from the analogous constructions for the $F$-matroids of the flag (see \autoref{rem: properties of falg matroids}, \autoref{prop: functoriality}, \autoref{thm: duality}, \autoref{thm: minors}).

\begin{thmA}\label{thmD}
 Let $\bM=(M_1,\dotsc,M_s)$ be a flag $F$-matroid of rank $\br$ on $E$. Let $I$ and $J$ be disjoint subsets of $E$ and $\bi=(i_1,\dotsc,i_t)$ with $1\leq i_1\leq \dotsb\leq i_t\leq s$. Then
 \[
  \bM^\ast = (M_1^\ast,\dotsc,M^\ast_s), \quad \bM\minor IJ = (M_1\minor IJ,\dotsc,M_s\minor IJ), \quad\pi_\bi(\bM) = (M_{i_1},\dotsc,M_{i_t})
 \]
 are flag $F$-matroids, and $(\bM\minor IJ)^\ast=\bM^\ast\minor JI$. Given a tract morphism $f:F\to F'$, the sequence $f_\ast\bM=(f_\ast M_1,\dotsc, f_\ast M_s)$ is a flag $F'$-matroid.
\end{thmA}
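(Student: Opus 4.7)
The plan is to leverage the cryptomorphism of \autoref{thmB} to reduce each assertion to a single working condition: a sequence $\bM=(M_1,\dotsc,M_s)$ of $F$-matroids is a flag $F$-matroid if and only if $\cC^\ast(M_i) \subset \cV^\ast(M_j)$ for all $1 \leq i \leq j \leq s$. Each of the four operations is then checked componentwise, invoking the corresponding Baker-Bowler constructions for single $F$-matroids and verifying that they are compatible with this cryptomorphic condition.

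For the projection $\pi_\bi$, the argument is immediate, since restricting to a subsequence of indices trivially preserves the condition $\cC^\ast(M_i) \subset \cV^\ast(M_j)$ for $i\leq j$. For the push-forward $f_\ast$, it is cleanest to use the defining Pl\"ucker flag relations rather than the cryptomorphism: if $\varphi_i : E^{r_i} \to F$ are Grassmann-Pl\"ucker functions representing $M_i$, then the compositions $f\circ\varphi_i$ represent $f_\ast M_i$, and because a tract morphism must map $N_F$ into $N_{F'}$, post-composition with $f$ preserves the Pl\"ucker flag relations. For the minor $\bM\minor{I}{J}$, I would combine Baker-Bowler's construction of $M\minor{I}{J}$ with the standard description of cocycles and covectors of a minor---cocycles of $M\minor{I}{J}$ arise from cocycles of $M$ by restriction and by enforcing support away from~$I$, and symmetrically for covectors. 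This transfers the hypothesis $\cC^\ast(M_i)\subset\cV^\ast(M_j)$ to the corresponding inclusion for the minors, allowing a second appeal to \autoref{thmB}.

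The subtlest step is duality, because the quotient relation, and hence the order of the flag, is reversed by $(-)^\ast$. Using the Baker-Bowler identities $\cC^\ast(M^\ast)=\cC(M)$ and $\cV^\ast(M^\ast)=\cV(M)$, together with the orthogonality of cycles and cocycles internal to a single $F$-matroid, the hypothesis $\cC^\ast(M_i)\subset\cV^\ast(M_j)$ for $i\leq j$ translates into the statement that every cycle of $M_j$ is orthogonal to every cocycle of $M_i$. The dual pair characterization then produces the desired inclusion between cocycles and covectors of $(M_1^\ast,\dotsc,M_s^\ast)$ with the appropriately reversed indexing, so that \autoref{thmB} applies once more. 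The equality $(\bM\minor{I}{J})^\ast=\bM^\ast\minor{J}{I}$ finally reduces to the familiar single-matroid identity $(M\minor{I}{J})^\ast=M^\ast\minor{J}{I}$ applied componentwise.

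The hardest part will be the duality step: one must carefully track how the asymmetric inclusion $\cC^\ast\subset\cV^\ast$ is flipped by $(-)^\ast$. For non-perfect tracts this is delicate because $\cV^\ast(M)$ need not coincide with the orthogonal complement of $\cV(M)$, and one must instead rely on the intrinsic axiomatization of covectors from the work of Anderson, combined with the dual pair description, to extract the right orthogonality statement without passing through orthogonal complements.
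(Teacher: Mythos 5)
Your treatment of the projection, the push-forward and the duality statements is essentially the paper's own. The push-forward is handled in \autoref{prop: functoriality} exactly as you propose (tract morphisms preserve nullsets, so they preserve the Pl\"ucker flag relations), the projection is immediate from the definition, and the duality argument in \autoref{prop-duality} is the same observation you make: by \autoref{thm-cryptomorphism}, $N\twoheadrightarrow M$ is equivalent to the orthogonality $\cC^\ast(M)\perp\cC(N)$, and since $\cC^\ast(M)=\cC(M^\ast)$ and $\cC(N)=\cC^\ast(N^\ast)$ this condition is literally symmetric under dualization (the paper phrases it via $\cC(N)\subseteq\cV^\ast(N)^\perp\subseteq\cC(M^\ast)^\perp=\cV^\ast(M^\ast)$). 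Your closing worry about non-perfect tracts is unfounded here: $\cV^\ast(M)$ is \emph{defined} as $\cC(M)^\perp$, so no appeal to Anderson's covector axioms or to perfection is needed for duality, and the order reversal comes out automatically. The identity $(\bM\minor IJ)^\ast=\bM^\ast\minor JI$ is indeed just \autoref{lemma-minors}(3) applied componentwise.

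The genuine gap is in the minors step. You propose to transfer $\cC^\ast(M_i)\subseteq\cV^\ast(M_j)$ to the minors using ``the standard description of cocycles and covectors of a minor,'' but over an arbitrary tract no such description is available off the shelf: the only result about minors quoted in the paper (\autoref{lemma-minors}) concerns Grassmann--Pl\"ucker functions, underlying matroids and duality, and says nothing about circuits, cocircuits or covectors of $M\minor IJ$. Moreover, the description you state is not accurate even in spirit for the contraction part: restrictions of cocircuits with support away from the contracted set are in general not cocircuits of the minor (only those of minimal support survive, and a scaling/sign verification is needed), and a covector description of minors for non-perfect tracts would itself require proof. So as written the key inclusion $\cC^\ast(M_i\minor IJ)\subseteq\cV^\ast(M_j\minor IJ)$ is not justified. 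The paper avoids this entirely: \autoref{prop-minor} proves $N/e\twoheadrightarrow M/e$ by a direct verification of the Pl\"ucker flag relations for $\mu/e$ and $\nu/e$, with a case analysis on whether $e$ is a loop of $\underline M$ and/or $\underline N$ (using that a non-loop of the quotient is a non-loop of $N$), deduces the deletion case $N\backslash e\twoheadrightarrow M\backslash e$ from the contraction case via the already-proved duality and $(N^\ast/e)^\ast=N\backslash e$, and then \autoref{thm: minors} iterates one element at a time. Your plan could likely be repaired by proving (or correctly citing, if available for strong matroids over arbitrary tracts) that every circuit of a deletion, resp.\ contraction, is the restriction of a circuit of the original matroid with the appropriate support condition, after which the orthogonality transfers because the discarded coordinates contribute zero terms; but that lemma is precisely the missing ingredient, and it is not supplied.
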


Our initial example of a flag matroid as a sequence of matroid minors extends to flag $F$-matroids in the following way (see \autoref{thm: flags of minors}, \autoref{prop: contraction-deletion flag}).

\begin{thmA}\label{thmE}
 Let $0\leq n_s \leq  \dotsc \leq n_1 \leq p$ and $n'=n+p$ be integers and $M'$ be an $F$-matroid on $E'=\{1,\dotsc,n'\}$. For $i=1,\dotsc,s$, we define $M_i=M'\minor{I_i}{J_i}$ where $J_i=\{n+1,\dotsc,n+n_i\}$ and $I_i=\{n+(n_i+1),\dotsc,n'\}$. Then $(M_1,\dotsc,M_s)$ is a flag $F$-matroid. If $F$ is perfect, then every flag $F$-matroid of rank $(r,r+1)$ is of this form.
\end{thmA}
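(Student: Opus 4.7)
My approach splits along the two parts of the statement.

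For Part 1 (any $(M_1,\dotsc,M_s)$ obtained from $M'$ by the prescribed minor operations is a flag $F$-matroid), the plan is to exhibit Grassmann--Pl\"ucker functions for the $M_i$ that make the flag Pl\"ucker relations manifest. Choose a Grassmann--Pl\"ucker function $\varphi':(E')^{r'}\to F$ for $M'$, and fix an ordering of $J_1$ whose length-$n_i$ initial segment enumerates $J_i$ for every $i$; write $\vec a^i$ for this tuple. By the standard minor formulas reviewed in \autoref{section: Baker-Bowler theory}, $\varphi_i(\vec x):=\varphi'(\vec a^i,\vec x)$ is a Grassmann--Pl\"ucker function for $M_i$. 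For $i\leq j$, write $\vec a^i=(\vec a^j,\vec c)$ where $\vec c$ enumerates $J_i\setminus J_j$, and substitute into the flag Pl\"ucker relation. The resulting sum is exactly the specialization of the Pl\"ucker relation for $\varphi'$ to $\vec u=(\vec a^j,\vec c,\vec x)$ and $\vec z=(\vec a^j,\vec y)$ (of the correct lengths $r'-1$ and $r'+1$); the terms with $z_l\in\vec a^j$ vanish by the alternating property of $\varphi'$, and the surviving terms reproduce the flag Pl\"ucker relation after extracting a unit sign $\epsilon^{n_i+n_j}$. This proves Part~1 for arbitrary tracts $F$.

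For Part 2 (the converse for perfect $F$ and rank $(r,r+1)$), I construct $M'$ explicitly as a single-element coextension. Given Grassmann--Pl\"ucker functions $\varphi_1$ and $\varphi_2$ for $M_1$ and $M_2$, define $\varphi':(E\cup\{*\})^{r+1}\to F$ as the unique alternating function with $\varphi'\big|_{E^{r+1}}=\varphi_2$ and $\varphi'(*,x_1,\dotsc,x_r)=\varphi_1(x_1,\dotsc,x_r)$ for $x_i\in E$. Once $\varphi'$ is shown to be a Grassmann--Pl\"ucker function, the $F$-matroid $M':=[\varphi']$ of rank $r+1$ on $E\cup\{*\}$ automatically satisfies $M'\setminus\{*\}=M_2$ (by restriction of $\varphi'$ to $E^{r+1}$) and $M'/\{*\}=M_1$ (by the minor formula applied at the first argument), realizing $(M_1,M_2)$ in the claimed form with $n_1=1$, $n_2=0$, $p=1$.

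The main obstacle is verifying the Pl\"ucker relations for $\varphi'$. Splitting into cases based on where $*$ appears in $\vec u\in(E\cup\{*\})^r$ and $\vec z\in(E\cup\{*\})^{r+2}$: the case $*\notin\vec u\cup\vec z$ reduces to the Pl\"ucker relation for $\varphi_2$, and the case $*\in\vec u$, $*\notin\vec z$ reduces---after pulling $*$ to the front and absorbing a sign---to the flag Pl\"ucker relation for $(\varphi_1,\varphi_2)$. The remaining cases, in which $*$ appears among the $\vec z$-entries, produce an auxiliary identity of the shape
\[
 \varphi_1(\vec u)\,\varphi_2(\vec v)\ +\ \sum_{k=1}^{r+1}\epsilon^k\,\varphi_2(v_k,\vec u)\,\varphi_1(\vec v\setminus v_k)\ \in\ N_F
\]
(with $\vec u\in E^r$, $\vec v\in E^{r+1}$)---an alternative form of the flag Pl\"ucker relation that does not follow formally from the stated one over an arbitrary tract. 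This is precisely where perfectness of $F$ enters: I anticipate invoking \autoref{thmC} to replace the flag hypothesis by the covector inclusion $\cV^\ast(M_1)\subset\cV^\ast(M_2)$, from which the missing identity can be derived either by constructing the covectors of $M'$ directly through cocircuit elimination, or by combining the standard flag Pl\"ucker relations with the Pl\"ucker relations of $\varphi_1$ and $\varphi_2$ individually via manipulations that collapse to the required identity only in a perfect tract. Once $\varphi'$ is secured as a Grassmann--Pl\"ucker function, the theorem follows.
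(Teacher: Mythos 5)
Your overall architecture mirrors the paper's: Part 1 is proved by representing the minors $M_i=M'\minor{I_i}{J_i}$ through a single Grassmann--Pl\"ucker function $\varphi'$ with nested appended tuples, so that each flag Pl\"ucker relation is a truncation of a Pl\"ucker relation of $\varphi'$, and Part 2 by the single-element coextension gluing $\varphi_1$ and $\varphi_2$. However, both parts have genuine gaps. In Part 1, the step ``$\varphi_i(\vec x):=\varphi'(\vec a^i,\vec x)$ is a Grassmann--Pl\"ucker function for $M_i$,'' with $\vec a^i$ enumerating \emph{all} of $J_i$, is false for general $M'$: by \autoref{lemma-minors}, the tuple to append is a maximal $\underline{M'}$-independent subset of $J_i$ together with a basis of $\underline{M'}/(E\cup J_i)$ chosen inside $I_i$. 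If $J_i$ contains a loop of $M'$, your $\varphi_i$ is identically zero; if $E\cup J_i$ is not spanning (e.g.\ $I_i$ contains coloops), elements of $I_i$ must be appended and your formula represents the wrong matroid (or none). The theorem makes no genericity assumption on $M'$, and the bulk of the paper's proof of \autoref{thm: flags of minors} is exactly the construction of these independent sets \emph{compatibly for each pair of indices} (nested choices $Y\subseteq Y\sqcup Y^+\sqcup V$ and $Z\subseteq Z\sqcup V$, so that the tuple appended for the higher-rank minor is contained in the tuple appended for the lower-rank one); once that is done, your vanishing-by-alternation and specialization argument goes through. As written, your proof only covers the special case where every $J_i$ is independent and every $E\cup J_i$ is spanning.

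In Part 2 your coextension is the paper's construction in \autoref{prop: contraction-deletion flag}, and you correctly isolate the obstruction: when $*$ occurs among the entries of the long tuple, the Pl\"ucker relation for $\varphi'$ becomes an identity of the ``swapped'' shape (the higher-rank function receives the distributed variable), which does not follow formally from the Pl\"ucker flag relations over an arbitrary tract. But this is exactly where your argument stops. Neither of your two suggested uses of perfectness is carried out: arguing via ``covectors of $M'$ through cocircuit elimination'' presupposes the matroid $M'$ whose existence is the point at issue, and ``manipulations that collapse to the required identity only in a perfect tract'' is not a proof. The paper's use of perfectness is different and sidesteps your missing identity entirely: by \cite[Thm.\ 3.46]{Baker-Bowler19}, over a perfect tract it suffices to verify the three-term Pl\"ucker relations, i.e.\ those with $\#I\backslash J=3$; in that restricted situation all but three terms vanish because the remaining entries of the long tuple lie in the short tuple, and the troublesome case with $e\in I\backslash J$ collapses to a three-term instance of the \emph{given} flag relation (the two shapes coincide there), while the other cases reduce to the Pl\"ucker relations of $\mu$ or $\nu$ or to the flag relation with $e$ in the short tuple. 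Unless you either invoke this three-term reduction or genuinely derive your auxiliary identity from $\cV^\ast(M_1)\subseteq\cV^\ast(M_2)$ (via \autoref{thmC}), Part 2 remains incomplete.
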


\subsection*{Application to representation theory}

\autoref{thmE} has some interesting consequences for the representation theory of flag matroids. We explain a sample application in the following. For a tract $F$, we denote by $t_F:F\to \K$ the unique tract morphism to the Krasner hyperfield $\K$. A flag matroid is 
\begin{itemize}
 \item \emph{regular} if it is of the form $t_{\Funpm,\ast}\bM$ for a flag $\Funpm$-matroid $\bM$;
 \item \emph{binary} if it is of the form $t_{\F_2,\ast}\bM$ for a flag $\F_2$-matroid $\bM$;
 \item \emph{orientable} if it is of the form $t_{\S,\ast}\bM$ for a flag $\S$-matroid $\bM$.
\end{itemize}

\begin{thmA}\label{thmF}
 A flag matroid $\bM$ of rank $(r,r+1)$ is regular if and only if it is binary and orientable.
\end{thmA}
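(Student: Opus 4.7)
I would begin with the easy direction. The canonical tract morphism $t_{\Funpm}:\Funpm\to\K$ factors as $\Funpm\to\F_2\to\K$ and as $\Funpm\to\S\to\K$ (by uniqueness of the morphism to $\K$), so if $\bM=t_{\Funpm,\ast}\bN$ for a flag $\Funpm$-matroid $\bN$, then functoriality of push-forward (\autoref{thmD}) shows that the flag $\F_2$-matroid $(\Funpm\to\F_2)_\ast\bN$ and the flag $\S$-matroid $(\Funpm\to\S)_\ast\bN$ both push forward to $\bM$. Hence $\bM$ is binary and orientable.

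\textbf{Backward direction.} The plan is to reduce to the classical theorem that a (single) matroid is regular if and only if it is binary and orientable (due to Tutte, with further contributions by Bland--Las Vergnas; also reproved within Baker--Bowler theory) by exhibiting $\bM$ as arising from a single matroid via \autoref{thmE}. Assume $\bM=(M_1,M_2)$ of rank $(r,r+1)$ is binary (witnessed by a flag $\F_2$-matroid $\bN_2$) and orientable (witnessed by a flag $\S$-matroid $\bN_\S$). I would apply \autoref{thmE} in its cleanest single-element-extension form (parameters $p=1$, $n_1=1$, $n_2=0$) to produce a matroid $M'$ on $E\sqcup\{e\}$ of rank $r+1$ with $\bM=(M'/e,\,M'\setminus e)$, a binary matroid $N'_2$ with $\bN_2=(N'_2/e,\,N'_2\setminus e)$, and an oriented matroid $N'_\S$ with $\bN_\S=(N'_\S/e,\,N'_\S\setminus e)$. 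The plain Higgs lift $M'$ is uniquely determined by its bases---a basis of $M'$ is either a basis of $M_2$, or of the form $B\cup\{e\}$ for $B$ a basis of $M_1$.

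By functoriality of minors under push-forward (\autoref{thmD}), $t_{\F_2,\ast}N'_2$ is a plain matroid on $E\sqcup\{e\}$ whose deletion at $e$ is $M_2$ and contraction at $e$ is $M_1$, so by uniqueness of the Higgs lift $t_{\F_2,\ast}N'_2=M'$; thus $M'$ is binary. Similarly $t_{\S,\ast}N'_\S=M'$, so $M'$ is orientable. The classical single-matroid theorem then shows $M'$ is regular: there is an $\Funpm$-matroid $N'_\pm$ on $E\sqcup\{e\}$ with $t_{\Funpm,\ast}N'_\pm=M'$. Finally, \autoref{thmE} over $\Funpm$ produces the flag $\Funpm$-matroid $(N'_\pm/e,\,N'_\pm\setminus e)$, whose push-forward to $\K$ equals $(M'/e,\,M'\setminus e)=\bM$ by \autoref{thmD}. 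So $\bM$ is regular.

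\textbf{Main obstacle.} The most delicate point will be justifying the uniform application of \autoref{thmE} in its single-element-extension form to $\bM$, $\bN_2$, and $\bN_\S$---equivalently, verifying that every rank-$(r,r+1)$ flag $F$-matroid over a perfect tract $F$ admits a Higgs lift to an $F$-matroid on a ground set enlarged by one element. Once that is in hand, the proof combines the functoriality of push-forward with the uniqueness of the plain Higgs lift and the classical single-matroid characterization of regularity.
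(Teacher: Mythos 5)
Your proof is correct and follows essentially the same route as the paper: both directions proceed by factoring $t_{\Funpm}$ through $\F_2$ and $\S$, lifting $\bM$, $\bM_{\F_2}$ and $\bM_{\S}$ to single matroids on $E\sqcup\{e\}$ via the rank-$(r,r+1)$ case of \autoref{thmE} (resp.\ \autoref{prop: contraction-deletion flag}), using uniqueness of the lift over $\K$ together with the compatibility of push-forwards and minors to conclude that the lift $M'$ is binary and orientable, invoking the classical Bland--Las Vergnas result to get an $\Funpm$-lift of $M'$, and pushing the resulting flag $\Funpm$-matroid $(M'_{\Funpm}/e,\,M'_{\Funpm}\backslash e)$ back down to $\bM$. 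Your only cosmetic deviation is justifying the uniqueness of the $\K$-level lift by its basis description rather than by $\K^\times=\{1\}$ in \autoref{prop: contraction-deletion flag}, which is an equivalent argument.
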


\begin{proof}
 Since the tract morphism $t_{\Funpm}:\Funpm\to\K$ factors through both $\F_2$ and $\S$, every regular flag matroid is binary and orientable.
 
 Conversely assume that $\bM$ is a binary and orientable flag matroid of rank $(r,r+1)$, i.e., $\bM=t_{\F_2,\ast}(\bM_{\F_2})$ for a flag $\F_2$-matroid $\bM_{\F_2}$ and $\bM=t_{\S,\ast}(\bM_{\S})$ for a flag $\S$-matroid $\bM_{\S}$. Since $\F_2$ and $\S$ are perfect, we can apply \autoref{thmE} to find an $\F_2$-matroid $M'_{\F_2}$ and an $\S$-matroid $M'_{\S}$ with $\bM_{\F_2}=(M'_{\F_2}/e, \, M'_{\F_2}\backslash e)$ and $\bM_{\S}=(M'_{\S}/e, \, M'_{\S}\backslash e)$. 
 By \cite[Prop.\ 5.1]{Kung77}, $\bM=(M'/e,\, M'\backslash e)$ for a rank $r+1$-matroid $M'$ on $E'=E\cup\{e\}$, which is unique by \autoref{prop: contraction-deletion flag} as $\K^\times=\{1\}$. Since push-forwards commute with taking minors, we conclude that $t_{\F_2,\ast}(M'_{\F_2})=M'=t_{\S,\ast}(M'_{\S})$, which shows that $M'$ is binary and orientable. By \cite[Cor.\ 6.2.6]{Bland-LasVergnas78}, $M'=t_{\Funpm,\ast}(M'_{\Funpm})$ for an $\Funpm$-matroid $M'_{\Funpm}$. Using again the compatibility of push-forwards with minors yields that $\bM=t_{\Funpm,\ast}(\bM_\Funpm)$ for the flag $\Funpm$-matroid $\bM_\Funpm=(M'_\Funpm/e,\, M'_{\Funpm}\backslash e)$, which proves our claim.
\end{proof}

The strategy of this proof does not extend to other ranks $\br$ since not every flag $F$-matroid is a sequence of minors of a single matroid $M'$ (where $F$ stands for an arbitrary perfect tract). But it is perceivable that other techniques from the representation theory of matroids generalize to flag matroids of arbitrary rank. As a sample problem for future investigations, we pose the question:
\begin{problem*}
 Is every binary and orientable flag matroid regular?
\end{problem*}

\subsection*{The moduli space}

The theory of moduli spaces of matroids from Baker and the second author's paper \cite{Baker-Lorscheid21} extends to flag matroids, utilizing ordered blue schemes. We recall some aspects from the theory of ordered blueprints and ordered blue schemes and refer to \cite{Baker-Lorscheid21} for full details.

An \emph{ordered blueprint $B$} consists of a commutative semiring $B^+$ with $0$ and $1$ together with a multiplicatively closed subset $B^\bullet$ that contains $0$ and $1$ and that generates $B^+$ as a semiring and together with a partial order $\leq$ on $B^+$ that is closed under addition and multiplication in the sense that $x\leq y$ implies $x+z\leq y+z$ and $xz\leq yz$. A tract $F$ defines the ordered blueprint $B=F^\oblpr$ with ambient semiring $B^+=\N[F^\times]$, underlying monoid $B^\bullet=F$ and partial order $\leq$ that is generated by the relations $0\leq x$ for $x\in N_F$. 

Since $\leq$ is closed under addition, we lose information about tracts $F$ for which $N_F$ is not closed under addition. Therefore we restrict our attention in this part of the paper to \emph{idylls}, which are tracts with additively closed nullset and which can be identified with the associated ordered blueprint. This technical restriction is mild since all (partial) fields and all hyperfields, including $\Funpm$, $\K$, $\T$ and $\S$, are idylls. In the following, $F$ denotes an idyll.

Flag $F$-matroids are canonically identified with $F$-rational points of the flag variety $\Fl(\br,E)$ over $\Funpm$, which is defined as the closed subscheme of $\prod_{i=1}^s \P_\Funpm^{{n}^{r_i}-1}$ given by the identities $T_{(\sigma(x_1),\dotsc,\sigma(x_{r_i}))}=\sign(\sigma)T_{(x_1,\dotsc,x_{r_i})}$ for $\sigma\in S_{r_i}$ and $T_{(x_1,\dotsc,x_{r_i})}=0$ if $\#\{x_1,\dotsc,x_{r_i}\}<r_i$, as well as the multi-homogeneous Pl\"ucker flag relations
\[
 0 \ \leq \ \sum_{k=1}^{r_j+1} \ \epsilon^k \, T_{(y_k,x_1,\dotsc,x_{r_i-1})} \, T_{(y_1,\dotsc,\widehat{y_k},\dotsc,y_{r_j+1})}
\]
for all $1\leq i\leq j\leq s$ and $x_1\dotsc,x_{r_i-1},y_1,\dotsc,y_{r_j+1}\in E$. As the special case $\br=(r_1)$, the flag variety is the Grassmannian, or \emph{matroid space}, $\Gr(r_1,E)=\Mat(r_1,E)$; see \cite[section 5]{Baker-Lorscheid21}. The following is \autoref{cor: flag matroids as rational points}.

\begin{thmA}\label{thmG}
 Let $F$ be an idyll. Then there is a canonical bijection between flag $F$-matroids and $F$-rational points of $\Fl(\br,E)$.
\end{thmA}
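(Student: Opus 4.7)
The plan is to reduce to the Grassmannian case, which is already established in \cite[Section 5]{Baker-Lorscheid21}. That reference shows that $F$-rational points of $\Mat(r,E)=\Gr(r,E)\subset\P_\Funpm^{n^r-1}$ are in canonical bijection with $F$-matroids of rank $r$ on $E$, where the bijection sends a morphism $\Spec F \to \Gr(r,E)$ to the $F^\times$-class $[\varphi]$ of the associated alternating, non-zero Grassmann-Pl\"ucker function $\varphi:E^r\to F$. Applying this to each factor shows that $F$-rational points of the product $\prod_{i=1}^s\P_\Funpm^{n^{r_i}-1}$, subjected only to the alternating relations $T_{(\sigma(x_1),\dotsc,\sigma(x_{r_i}))}=\sign(\sigma)T_{(x_1,\dotsc,x_{r_i})}$ and the vanishing conditions on repeated indices, are precisely tuples $([\varphi_1],\dotsc,[\varphi_s])$ of $F^\times$-classes of non-trivial alternating functions $\varphi_i:E^{r_i}\to F$.

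Next I would translate the Pl\"ucker flag inequalities. Given any such tuple, an $F$-rational point of $\Fl(\br,E)$ is obtained by requiring that each defining inequality $0\leq \sum_k\epsilon^k\,T_{(y_k,x_1,\dotsc,x_{r_i-1})}\,T_{(y_1,\dotsc,\widehat{y_k},\dotsc,y_{r_j+1})}$ be satisfied after specializing $T_\ast$ to the $\varphi_\ast$ values in the ordered blueprint $F^\oblpr$. Because $F$ is an idyll, the partial order on $F^\oblpr$ is generated by $0\leq\alpha$ for $\alpha\in N_F$, and the nullset $N_F$ itself is additively closed; hence $0\leq\alpha$ holds in $F^\oblpr$ if and only if $\alpha\in N_F$. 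The Pl\"ucker flag inequalities therefore translate exactly to the Pl\"ucker flag relations in the nullset sense used to define flag $F$-matroids. Specialized to $i=j$ these reduce to the usual Pl\"ucker relations, so each $[\varphi_i]$ is an $F$-matroid $M_i$ by Baker-Bowler theory; the relations with $i<j$ are exactly the mixed Pl\"ucker flag relations defining a flag $F$-matroid $\bM=(M_1,\dotsc,M_s)$.

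The map $\bigl(\Spec F\to\Fl(\br,E)\bigr)\mapsto([\varphi_1],\dotsc,[\varphi_s])$ is therefore well-defined, and the inverse sends a flag $F$-matroid $\bM$ to the morphism determined by any choice of representing Grassmann-Pl\"ucker functions $\varphi_i$; the $F^\times$-ambiguity in each $\varphi_i$ is absorbed by the projective structure of the $i$-th factor, so the morphism is independent of this choice. The main obstacle is the careful bookkeeping in the first step: one must verify that an $F$-point of the multi-projective ambient space really does decompose into the product of $F$-points of the factors, and that the alternating relations cut out the correct $F^\times$-classes. Once this functor-of-points description has been set up in the ordered blueprint framework, the remaining translation of the flag relations through the idyll hypothesis is formal, and bijectivity is immediate from matching the two sides term by term.
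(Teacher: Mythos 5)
Your argument is correct, but it takes a genuinely different route from the paper. The paper does not prove \autoref{thmG} by a direct analysis of $F$-rational points: it first establishes the stronger statement (\autoref{thm: moduli property}) that $\Fl(\br,E)$ is the \emph{fine moduli space} of flag matroid bundles over arbitrary ordered blue $\Funpm$-schemes $X$, constructing the universal flag of Grassmann-Pl\"ucker functions and invoking \cite[Thm.\ 4.20, Thm.\ 5.5]{Baker-Lorscheid21} to produce the inverse $\Psi_X$; the bijection of \autoref{thmG} is then the special case $X=\Spec F$, combined with the identification of flag $F$-matroids with flag matroid bundles over $\Spec F$ (\autoref{rem: flag matroids as flag matroid bundles}). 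You instead work only with $X=\Spec F$: you quote the Grassmannian rational-point description from \cite[Section 5]{Baker-Lorscheid21}, use that $F$-points of the multi-projective ambient space decompose factorwise into $F^\times$-classes of nonzero alternating tuples (which, for $\Spec F$ local, amounts to the same line-bundle argument being trivialized), and translate the defining inequalities \eqref{Fl1}--\eqref{Fl3} into the P\"ucker flag relations via the idyll property that $0\leq\alpha$ holds in $F^\oblpr$ exactly when $\alpha\in N_F$ (here the additive closedness of $N_F$ and axiom (T3) are what make this translation exact, and one should also note that passing between the dehomogenized relations on the charts $\prod U_{\bJ_i}$ and the homogeneous flag relations uses invariance of $N_F$ under multiplication by units). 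What your route buys is a shorter, more elementary proof of the rational-point statement itself; what it does not give is the universal bundle and the functorial bijection over arbitrary bases, which the paper needs afterwards to define characteristic morphisms and to prove \autoref{prop: coordinate projections}, \autoref{thm: dual flag variety} and \autoref{thm: minors of flag varieties}. Your acknowledged bookkeeping step (that $F$-points of the closed subscheme are precisely the ambient $F$-points satisfying the specialized relations, chart by chart) is routine for ordered blue schemes and is not a gap.
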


\autoref{thmG} generalizes the fact that the $K$-rational points of a flag variety correspond to flags of $K$-linear subspaces for fields $K$ and that the points of the flag Dressian correspond to flags of tropical linear spaces; see \cite{Brandt-Eur-Zhang21}. In fact, \autoref{thmG} follows from the stronger property that $\Fl(\br,E)$ is the fine moduli space of flag matroid bundles; see \autoref{thm: moduli property} for the precise statement.

The previously discussed constructions for flag $F$-matroids extend to geometric constructions in terms of certain canonical morphisms between flag varieties under the identification of flag $F$-matroids with $F$-rational points of $\Fl(\br,E)$ in \autoref{thmG}. 

\subsubsection*{Coordinate projection} 
Let $\bi=(i_1,\dotsc,i_t)$ be a sequence of integers with $1\leq i_1\leq\dotsb\leq i_t\leq s$ and $\br'=(r_{i_1},\dotsc,r_{i_t})$. Then there is a canonical morphism $\pi_\bi:\Fl(\br,E)\to\Fl(\br',E)$, which maps a flag $F$-matroid $\bM=(M_1,\dotsc,M_s)$ to $\bM'=(M_{i_1},\dotsc,M_{i_t})$. This is \autoref{prop: coordinate projections}.

\subsubsection*{Duality} 
Define $\br^\ast=(r_s^\ast,\dotsc,r_1^\ast)$ with $r_i^\ast=n-r_i$. Then there is a canonical morphism $\Fl(\br,E)\to\Fl(\br^\ast,E)$, which maps a flag $F$-matroid $\bM$ to its dual $\bM^\ast$ (after composition with the duality involution of $F$). This is \autoref{thm: dual flag variety}.

\subsubsection*{Minors} 
Let $e\in E$ and $\br_k=(r_1-1,\dotsc,r_k-1,r_{k+1},\dotsc,r_s)$. Then there exist locally closed subschemes $W_{\br,k,/e}\hookrightarrow\Fl(\br,E)$ and morphisms $\Psi_{\br,k,/e}:W_{\br,k,/e}\to \Fl(\br_k,E)$ such that $\Fl(\br,E)(F)=\coprod_{k=1}^sW_{\br,k,/e}(F)$ and such that $\Psi_{\br,k,/e}$ maps a flag $F$-matroid $\bM$ to its contraction $\bM/e$. There are analogous morphisms for the deletion $\bM\backslash e$. This is \autoref{thm: minors of flag varieties}.

\subsubsection*{Flags of minors} 
Let $n'=n+r_s-r_1$ and $E'=\{1,\dotsc,n'\}$. Let $W_{\br,E}$ be the open locus in $\Gr(r_s,E')$ of matroids for which $\{n+1,\dotsc,n'\}$ is independent and co-independent. Then there is a canonical morphism $W_{\br,E}\to\Fl(\br,E)$, which sends an $F$-matroid $M'$ of rank $r_s$ on $E'$ to the flag $F$-matroid $(M'\minor{I_1}{J_1},\dotsc,M'\minor{I_s}{J_s})$ of rank $\br$ on $E$ where $J_i=\{n+1,\dotsc,r_s-r_i\}$ and $I_i=\{r_s-r_i+1,\dotsc,n'\}$. This is \autoref{thm: flags of minors morphism}.

As explained in our initial example, every flag matroid is a sequence of minors, i.e., $W_{\br,E}(\K)\to\Fl(\br,E)(\K)$ is surjective for all $\br$ and $E$. This is also true for fields $K$: the map $W_{\br,E}(K)\to\Fl(\br,E)(K)$ is surjective since it is $\GL(E,K)$-equivariant and since $\GL(E,K)$ acts transitively on $\Fl(\br,E)(K)$. Initially Las Vergnas expected the same for pairs of oriented matroids, but this was later disproven by Richter-Gebert; see \cite[Cor.\ 3.5]{Richter-Gebert93}. This makes us wonder:

\begin{problem*}
 For which $\br$, $E$ and $F$ is $W_{\br,E}(F)\to\Fl(\br,E)(F)$ surjective?
\end{problem*}

\subsubsection*{Topologies on rational point sets}
A topology on $F$ induces a topology on $\Fl(\br,E)(F)$, which recovers several known instances of flag varieties. If $F=K$ is a topological field, then $\Fl(\br,E)(K)$ is the flag variety over $K$ together with its strong topology; in particular, $\Fl(\br,E)(\R)$ is the manifold of flags of real linear subspaces. Endowing $F=\T$ with the real topology identifies $\Fl(\br,E)(\T)$ with the flag Dressian from \cite{Brandt-Eur-Zhang21} as a tropical variety. Endowing the sign hyperfield $\S$ with the topology generated by the open subsets $\{1\}$ and $\{\epsilon\}$ yields a generalization of the MacPhersonian to oriented flag matroids. Endowing the Krasner hyperfield $\K$ with the topology generated by the open subset $\{1\}$ endows $\Fl(\br,E)(\K)$ with a topology, in which a matroid $M$ is in the closure of another matroid $N$ precisely if all bases of $M$ are bases of $N$. A more detailed discussion of topologies on rational point sets can be found in \autoref{subsection: rational point sets}.

\subsection*{Relation to combinatorial flag varieties and Tits geometries}

Borovik, Gelfand and White introduce in \cite{Borovik-Gelfand-White01} the \emph{combinatorial flag variety $\Omega_W$} for the symmetric group $W=S_n$ as the order complex of the collection of all matroids (of arbitrary rank) on $E=\{1,\dotsc,n\}$, endowed with the partial order $M\leq N$ if and only if $N\twoheadrightarrow M$. Thus $\Omega_W$ is a chamber complex, and its chambers are indexed by flag matroids on $E$. The maximal chambers have dimension $n-2$ and correspond to flag matroids of rank $(1,\dotsc,n-1)$.

The same authors mention at the end of section 7.14 of their book \cite{Borovik-Gelfand-White03} that:
\begin{quote}\small
 Many geometries over fields have formal analogues which can be thought of as geometries over the field of $1$ element. For example, the projective plane over the field $\F_q$ has $q^2+q+1$ points and the same number of lines; every line in the plane has $q+1$ points. When $q=1$, we have a plane with three points and three lines, i.e., a triangle. The flag complex of the triangle is a thin building of type $A_2 = Sym_3$. In general, the Coxeter complex $\cW$ of a Coxeter group $W$ is a thin building of type $W$ and behaves like the building of type $W$ over the field of $1$ element.

 However, the Coxeter complex has a relatively poor structure. In many aspects, $\Omega_W$ and $\Omega_W^\ast$ are more suitable candidates for the role of a ``universal'' combinatorial geometry of type $W$ over the field of $1$ element.
\end{quote}

For the Coxeter group $W=S_n$ of type $A_{n-1}$, the combinatorial flag variety $\Omega_W$ resurfaces as the set of $\K$-rational points of the $\Funpm$-schemes $\Fl(\br,E)$ (with varying $\br$). More precisely, the chambers of $\Omega_W$ correspond bijectively to
\[
 \coprod_{\br\in\Theta} \ \Fl(\br,E)(\K) \quad \text{where} \quad \Theta \ = \ \big\{ \, (r_1,\dotsc,r_s) \, \big| \, s>0,\; 0<r_1<\dotsb<r_s<n \, \big\},
\]
and the chamber of a flag matroid $\bN$ is the face of the chamber of a flag matroid $\bM$ if and only if $\bN=\pi_\bi(\bM)$ for an appropriate coordinate projection $\pi_\bi$ (see \autoref{thmD}).

Moreover, Borovik, Gelfand and White observe in \cite{Borovik-Gelfand-White01} that the Coxeter complex $\cW$ of $W$ appears naturally as the subcomplex of $\Omega_W$ that consists of flags of matroids with exactly one basis, which correspond to the closed points of $\Fl(\br,E)(\K)$. This links their idea to Tits' seminal paper \cite{Tits57} on $\Fun$, where Tits introduces \emph{geometries}, which can be thought of as a predecessor of a building over a finite field, and where he muses over the (lack of a) \emph{field of characteristic one}, which could explain the role of the Coxeter complexes $\cW$. 

In so far, our flag varieties $\Fl(\br,E)$, together with the various coordinate projections $\pi_\bi$, can be seen as an enrichment of both Tits' and Borovik, Gelfand and White's perspectives on $\Fun$-geometry.


\subsection*{Acknowledgements}
We would like to thank Matthew Baker for several discussions and Christopher Eur for pointing us to the work on strong maps of oriented matroids. We would like to thank Eduardo Vital for his comments on a previous version. The first author was supported by a CNPq fellowship - Brazil (140325/2019-0).

\section{Baker-Bowler theory}
\label{section: Baker-Bowler theory}

In this section, we review the theory of matroids over a tract, as developed by Baker and Bowler in \cite{Baker-Bowler19}.

\subsection{Tracts}

A \emph{pointed monoid} is a (multiplicatively written) monoid $F$ with neutral element $1$ and an \emph{absorbing element $0$} (or \emph{zero} for short), which is characterized by the property that $0\cdot a=0$ for all $a\in F$. The \emph{unit group of $F$} is the submonoid $F^\times$ of all invertible elements of $F$, which is a group.

A \emph{tract} is a commutative pointed monoid $F$ with unit group $F^\times=F-\{0\}$ together with a subset $N_F$ of the group semiring $\N[F^\times]$, called the \emph{nullset of $F$}, which satisfies:
\begin{enumerate}[label = (T\arabic*)]\setcounter{enumi}{-1}
    \item The zero element of $\N[F^\times]$ belongs to $N_F$.
    \item The multiplicative identity of $\N[F^\times]$ is not in $N_F$.
    \item There is a unique element $\epsilon$ in $F^\times$ with $1+\epsilon \in N_F$.
    \item $N_F$ is closed under the natural action of $F^\times$ on $\N[F^\times]$.
\end{enumerate}
Note that the axioms imply that $\epsilon^2=1$ and that $a+b\in N_F$ if and only if $b=\epsilon a$. A \emph{morphism between tracts $F$ and $F'$} is a multiplicative map $f:F\to F'$ such that $f(0)=0$ and $f(1)=1$ and such that $\sum f(a_i) \in N_{F'}$ whenever $\sum a_i \in N_F$. 

\begin{ex}[Fields]
\label{field-tract}
 A field $K$ can be considered as the tract whose multiplicative monoid equals that of $K$ and whose nullset is $N_{K}=\{\sum a_i \in \N[K^\times]|\; \sum a_i = 0 \text{ in } K\}$. If the context is clear, we denote the tract by the same symbol $K$, and we say that a tract $K$ is a field if it is associated with a field.
 
 More generally, partial fields can be considered as tracts, as explained in \cite[Thm.\ 2.21]{Baker-Lorscheid21}. The most relevant example for our purposes is the \emph{regular partial field $\Funpm$}, whose appearance as a tract consists of the multiplicative monoid $\Funpm=\{0,1,\epsilon\}$ and the nullset $N_\Funpm=\{n.1+m.\epsilon\mid n=m\}$. 
\end{ex}

\begin{ex}[Hyperfields]
A \textit{hyperoperation} on a set $S$ is a map $S\times S \rightarrow 2^S$. A \textit{hyperfield} is a generalization of a field whose addition is replaced by a hyperoperation $(a,b)\mapsto a\hyperplus b$, which satisfies analogous properties to the addition of a field. We consider a hyperfield $K$ as the tract $(K^\times, N_{K})$ with nullset $N_K=\big\{\sum a_i \in \N[K^\times] \, \big| \, 0 \in \hypersum a_i\big\}$. 

Some particular examples are the following:
\begin{itemize}
 \item The \textit{Krasner hyperfield} is the tract $\K=\{0,1\}$ with nullset $N_\K=\N-\{1\}$ and $\epsilon=1$. Its hyperaddition is given by $0\hyperplus a = \{a\}$ for $a=0,1$ and $1\hyperplus 1 = \{0, 1\}$. 
 \item The \textit{tropical hyperfield} is the tract $\T=\R_{\geq0}$ with nullset 
       \[\textstyle
        N_\T \ = \ \big\{\sum a_i\in\N[\R_{>0}] \,\big|\, \text{the maximum appears twice in }\{a_i\}\big\}
       \]
        and $\epsilon=1$. Its hyperaddition is given by $a\hyperplus b=\big\{\max\{a,b\}\big\}$ for $a\neq b$ and $a\hyperplus a=[0,a]$. 
 \item The \textit{sign hyperfield} is the tract $\S=\{0,1,-1\}$ with nullset 
       \[
        N_\S \ = \ \big\{n.1+m.(-1) \, \big| \, n=m=0\text{ or }n\neq0\neq m\big\}
       \]
       and $\epsilon=-1$. Its hyperaddition satisfies $a\hyperplus a=\{a\}$ and $a\hyperplus(-a)=\{0,1,-1\}$ for $a=1,-1$.
\end{itemize}
Note that $\K$ is a terminal object in the category of tracts: for every tract $F$, the \emph{terminal map} $t_F:F\to \K$ that maps $F^\times$ to $1$ is the unique tract morphism from $F$ to $\K$.
\end{ex}

\subsection{Matroids over tracts}

Let $E=\{1,\dotsc,n\}$ and $0\leq r\leq n$. A \textit{Grassmann–Plücker function of rank $r$ on $E$ with coefficients in $F$} is a function $\varphi: E^r\rightarrow F$ such that: 
\begin{enumerate}[label = (GP\arabic*)]
    \item\label{GP1} $\varphi$ is not identically zero;
    \item\label{GP2} $\varphi$ is alternating, i.e., $\varphi(x_1, \dotsc, x_i, \dotsc, x_j, \dotsc, x_r) = \epsilon \cdot \varphi(x_1, \dotsc, x_j, \dotsc, x_i, \dotsc, x_r)$ and $\varphi(x_1, \dotsc, x_r) = 0$ if $x_i = x_j$ for some $i\neq j$;
    \item\label{GP3} for all $x_1, \dotsc,  x_{r-1}, y_1, \dotsc, y_{r+1}\in E$, we have 
    \[
     \underset{k=1}{\overset{r+1}{\sum}} \ \epsilon^k \cdot \varphi(y_1, \dotsc, \widehat{y_k}, \dotsc, y_{r+1}) \cdot \varphi(y_k, x_1, \dotsc, x_{r-1}) \quad \in \quad N_F.
    \]
\end{enumerate}
The relations in \ref{GP3} are called the \emph{Pl\"ucker relations}.

We say that two Grassmann–Plücker functions $\varphi_1$ and $\varphi_2$ are \textit{equivalent} if $\varphi_1 = a\cdot \varphi_2$ for some $a\in F^{\times}$, and define an \textit{$F$-matroid (of rank $r$ on $E$)} as the equivalence class $M_\varphi=[\varphi]$ of a Grassmann–Plücker function $\varphi:E^r\to F$.

\begin{rem}
 These are, in fact, the definition of \textit{strong} Grassmann-Pl\"ucker functions and \textit{strong} $F$-matroids in \cite{Baker-Bowler19}. Though weak matroids are important to understand the representations of matroids over fields and other tracts, strong matroids are more suitable to study ``cryptomorphic'' properties (as in \cite{Anderson19}) and ``algebro-geometric'' properties (as in \cite{Baker-Lorscheid21}). We will not encounter weak matroids in this text and omit the attribute ``strong.''
\end{rem}

\subsection{Push-forwards and the underlying matroid}
Let $f: F \rightarrow F'$ be a morphism of tracts and $\varphi:E^r\to  F$ a Grassmann-Pl\"ucker function. Then $f\circ \varphi:  E^r\rightarrow F'$ is also a Grassmann-Pl\"ucker function. We define the \textit{push-forward of $M_\varphi$ along $f$} as $f_\ast M_\varphi= M_{f\circ \varphi}$.

Let $M$ be an $F$-matroid. The \textit{underlying matroid of $M$} is defined as the classical matroid $\underline{M}$ whose set of bases is
\[
 \mathcal{B}(\underline{M}) = \big\{ \{x_1, \dotsc, x_r\} \subseteq E \, \big|\, \varphi(x_1, \dotsc, x_r) \neq 0 \big\}.
\]
For a tract morphism $f:F\to F'$ and an $F$-matroid $M$, we have $\underline{f_\ast M}=\underline M$. The map
\[
 \begin{array}{ccc}
  \big\{ \text{$\K$-matroids} \big\} & \longrightarrow & \big\{ \text{usual matroids} \big\} \\
  M & \longmapsto & \underline M
 \end{array}
\]
is a bijection that identifies classical matroids with $\K$-matroids.

\subsection{Circuits}
For a tuple $X = (X_i)_{i\in E}$ of $F^E$, we define the \textit{support of $X$} as the set $\underline{X}:=\{i \in E\mid X_i \neq 0\}$, and for a subset $\mathcal{X} \subseteq F^E$, we define the \textit{support of $\mathcal{X}$} as $\supp(\mathcal{X}):= \{ \underline{X}\mid X \in \mathcal{X}\}$. 

We define the set of \textit{$F$-circuits of $M$} as follows. Let $\mathcal{C}(\underline{M})$ be the set of circuits of $\underline M$. For each $C \in \mathcal{C}(\underline{M})$, fix a $y_0 \in C$ and a basis $\{y_1, \dotsc, y_r\}$ of $\underline{M}$ containing $C-y_0$. We define $X_C \in F^E$ by
\[
\begin{aligned}
X_C(y): =&  
    \begin{cases}
    \epsilon^i \cdot \varphi(y_0, \dotsc, \widehat{y_i}, \dotsc, y_r) & \text{if } y = y_i \text{ for some $i$,}\\
    0 & \text{if } y \notin \{y_0, \dotsc, y_r\}.
    \end{cases}
\end{aligned}
\]
The set of $F$-circuits of $M$ is given by $\mathcal{C}(M):= \{a\cdot X_C| \; a \in F^\times, \, C\in \mathcal{C}(\underline{M})\}$. It does not depend on the choice of elements $y_0\in C$ and bases $\{y_1, \dotsc ,y_r\}$ containing $C-y_0$. Note that $\supp\big(\mathcal{C}(M)\big) = \mathcal{C}(\underline{M})$ and that $\mathcal{C}(M)$ satisfies the following three properties:
\begin{enumerate}[start=0, label = (C\arabic*)]
 \item \label{C0} $0\notin \mathcal{C}(M)$.
 \item \label{C1} If $X\in \mathcal{C}(M)$ and $a \in F^\times$, then $a \cdot X \in \mathcal{C}(M)$.
 \item \label{C2} If $X,Y \in \mathcal{C}(M)$ and $\underline{X}\subseteq \underline{Y}$, then there exists $a \in F^\times$ such that $X =a\cdot Y$.
\end{enumerate}

\begin{rem}\label{rem 1.4}
 The $F$-circuits satisfy an elimination property, which characterizes together with \ref{C0} -- \ref{C2} the subsets of $F^E$ that are sets of $F$-circuits of an $F$-matroid $M$. Moreover, $M$ is determined by $\cC(M)$, which yields a cryptomorphic description of $F$-matroids in terms of $F$-circuits (see \cite[Thm. 3.17]{Baker-Bowler19}). We forgo to spell out the elimination axiom, however, since it is somewhat involved and since we do not use it in this text.
\end{rem}

\subsection{Duality}
An \emph{involution of $F$} is a tract morphism $\tau:F\to F$ such that $\tau^2$ is the identity on $F$. In the following, we fix an involution $\tau$ and write $\overline x=\tau(x)$. 

Fix a total order for $E=\{x_1,\dotsc,x_n\}$ and let $\sigma$ be the unique permutation such that $x_{\sigma(1)}<\dotsb< x_{\sigma(n)}$. We consider $\sign(x_1, \dotsc, x_n)=\sign(\sigma)\in\{\pm1\}$ as an element of $F$ by identifying $-1$ with $\epsilon$. 

The \textit{dual of $M$} is the $F$-matroid $M^* = M_{\varphi^*}$, where $\varphi^*$ is the Grassmann-Pl\"ucker function $\varphi^\ast:E^{n-r}\to F$ that is determined by
\[
 \varphi^\ast(x_1, \dotsc, x_{n-r}) \ = \ \sign(x_1, \dotsc, x_{n-r}, x'_1, \dotsc, x'_r)\cdot \overline{\varphi(x'_1, \dotsc, x'_r)}
\]
whenever $E=\{x_1, \dotsc, x_{n-r},x'_1, \dotsc, x'_r\}$. The dual of $M$ satisfies $M^{**} = M$, and the underlying matroid of $M^\ast$ is the dual of $\underline M$.

\subsection{Orthogonality}
Let $X,Y\in F^E$. The \textit{inner product of $X$ and $Y$} is
\[
 X\cdot Y \ =  \ \sum_{i\in E} X_i\cdot \overline{Y_i},
\]
considered as an element of $\N[F^{\times}]$. We say that $X$ and $Y$ are \textit{orthogonal}, and write $X\perp Y$, if $X\cdot Y \in N_F$. We say that two subsets $\mathcal{X}$ and $\mathcal{Y}$ of $F^E$ are \textit{orthogonal}, and write $\mathcal{X} \perp \mathcal{Y}$, if $X\perp Y$ for all $X\in \mathcal{X}$ and $Y\in\mathcal{Y}$. 

The circuits of $M^*$ are called the \textit{cocircuits of $M$}, and we write $\mathcal{C}^*(M)=\mathcal{C}(M^*)$. Circuits and cocircuits are orthogonal: $\mathcal{C}(M) \perp \mathcal{C}^*(M)$.

We denote by $\cX^\perp=\{X\in F^E\mid X\perp\cX\}$ the orthogonal complement of a subset $\cX$ of $F^E$. The set $\mathcal{V}(M)=\mathcal{C}^*(M)^{\perp}$ is called the set of \textit{$F$-vectors of $M$}, and $\mathcal{V^*}(M)=\mathcal{C}(M)^{\perp}$ the set of \textit{$F$-covectors of $M$}. There is a cryptomorphic description of $F$-matroids in terms of their vectors, as explained in \cite{Anderson19}. Note that, as $\mathcal{C}(M)\subseteq \mathcal{V}(M)$, we have $\mathcal{V}(M)^{\perp} \subseteq \mathcal{V}^*(M)$. 

\begin{df}
 A tract $F$ is \textit{perfect} if $\mathcal{V}(M) \perp \mathcal{V}^*(M)$ for every $F$-matroid $M$.
\end{df}

Note that all fields and partial fields are perfect, and so are the hyperfields $\K$, $\S$ and $\T$.

\subsection{Dual pairs}
Let $N$ be a (classical) matroid on $E$. We call a subset $\mathcal{C}$ of $F^E$ an \textit{$F$-signature} of $N$ if $\supp(\mathcal{C})$ is the set of circuits of $N$ and $\mathcal{C}$ satisfies \ref{C0} -- \ref{C2}.

\begin{df}
 Let $\mathcal{C}$ and $\mathcal{D}$ be subsets of $F^E$. We say that $(\mathcal{C}, \mathcal{D})$ is a \textit{dual pair of $F$-signatures of $N$} if:

\begin{enumerate}[start=1, label = (DP\arabic*)]
\item $\mathcal{C}$ is an $F$-signature of $N$;
\item $\mathcal{D}$ is an $F$-signature of $N^*$;
\item $\mathcal{C}\perp \mathcal{D}$.
\end{enumerate}
\end{df}

\begin{thm}[{\cite[Thm. 3.26]{Baker-Bowler19}}]
\label{thm-dual pairs}
There is a bijection between $F$-matroids $M$ with underlying matroid $\underline{M} = N$ and dual pairs of $F$-signatures of $N$, given by
\[
M\longmapsto (\mathcal{C}(M), \mathcal{C}^*(M)).
\]
\end{thm}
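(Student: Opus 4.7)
The forward map $M \mapsto (\mathcal{C}(M), \mathcal{C}^*(M))$ is well-defined: since $\underline{M} = N$ and $\underline{M^*} = N^*$, the set $\mathcal{C}(M)$ is an $F$-signature of $N$ and $\mathcal{C}^*(M) = \mathcal{C}(M^*)$ is an $F$-signature of $N^*$ (both sets satisfy \ref{C0}--\ref{C2}, as recorded above), while the orthogonality $\mathcal{C}(M) \perp \mathcal{C}^*(M)$ is exactly the orthogonality of circuits and cocircuits already noted. Injectivity is immediate from \autoref{rem 1.4}: $M$ is determined by $\mathcal{C}(M)$ alone. A direct alternative would fix an ordered basis $B$ of $N$, normalize two representing Grassmann--Pl\"ucker functions to agree on $B$, and propagate equality to all other bases via the $F$-circuits and \ref{GP3}.

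For surjectivity, let $(\mathcal{C}, \mathcal{D})$ be a dual pair of $F$-signatures of $N$ and fix an ordered basis $B = (b_1, \ldots, b_r)$ of $N$. By \ref{C2}, for each $e \in E \setminus B$ there is a unique $X_e \in \mathcal{C}$ supported on the fundamental circuit $C(e, B)$ with $X_e(e) = 1$, and symmetrically for each $b_i \in B$ a unique $Y_{b_i} \in \mathcal{D}$ supported on the fundamental cocircuit $D(b_i, B)$ with $Y_{b_i}(b_i) = 1$. The elementary matroid identity $C(e, B) \cap D(b_i, B) \subseteq \{b_i, e\}$, combined with (DP3) and the normalizations, forces the two-term relation $Y_{b_i}(e) = \epsilon\,\overline{X_e(b_i)}$. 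I would then define $\varphi : E^r \to F$ by $\varphi(b_1, \ldots, b_r) = 1$, by $\varphi = 0$ on ordered tuples whose support is not a basis of $N$, alternating by \ref{GP2}, and on the remaining bases as the unique non-vanishing monomial of the formal permutation expansion, along $B$, of the ``matrix'' with $(b_i, e)$-entry $X_e(b_i)$ for $e \notin B$ and identity block on $B \times B$. On any single basis, at most one such monomial survives, so $\varphi$ takes values in $F$ rather than in $\N[F^\times]$.

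The main obstacle is verifying the Pl\"ucker relations \ref{GP3} for this $\varphi$. The crucial three-term instances should be exactly the translation of the orthogonality $X \cdot Y \in N_F$ for $X \in \mathcal{C}$ and $Y \in \mathcal{D}$ whose supports meet in three points; making this correspondence precise and sign-consistent is the technical heart of the argument. The longer multi-term Pl\"ucker relations are then deduced by iterating three-term identities along basis exchanges of $N$, leveraging the connectedness of the basis-exchange graph of a matroid. Once \ref{GP3} is in place, the matching $\mathcal{C}(M_\varphi) = \mathcal{C}$ and $\mathcal{C}^*(M_\varphi) = \mathcal{D}$ follows by unwinding the definition of $F$-circuits from a Grassmann--Pl\"ucker function and invoking the uniqueness up to $F^\times$ of the $X_e$ and $Y_{b_i}$ given their supports.
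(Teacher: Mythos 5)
The paper offers no proof of this statement---it is imported verbatim from \cite[Thm.\ 3.26]{Baker-Bowler19}---so the only possible comparison is with Baker and Bowler's own argument. Your treatment of well-definedness and injectivity is fine (the circuit set determines $M$). The surjectivity construction, however, has two genuine gaps. Defining $\varphi$ on a general basis as ``the unique non-vanishing monomial of the formal permutation expansion'' is not well defined: already for $N=U_{2,4}$ with $B=\{b_1,b_2\}=\{1,2\}$ and target basis $\{3,4\}$, both monomials $X_3(b_1)X_4(b_2)$ and $X_3(b_2)X_4(b_1)$ are nonzero (all four entries lie in the supports of the fundamental circuits), and a tract has no addition with which to combine them, so your recipe does not determine $\varphi$ there at all. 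The workable construction instead defines $\varphi(B')$ along a chain of single-element basis exchanges from $B$, with transition ratios read off from the circuit signature, and the substantive work is proving independence of the chosen chain, i.e.\ triviality of the ratio products around cycles of the basis-exchange graph; your sketch does not address this, and it is also where the orthogonality hypothesis first has to be used.

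Second, the plan to obtain the full relations \ref{GP3} by ``iterating three-term identities along basis exchanges'' cannot succeed over a general tract: it would show that every weak $F$-matroid is strong, which is false, and is exactly why \cite{Baker-Bowler19} distinguishes weak from strong dual pairs. The theorem at hand is the strong version, and correspondingly (DP3) demands $X\cdot Y\in N_F$ for \emph{all} pairs $X\in\mathcal{C}$, $Y\in\mathcal{D}$, including those whose supports meet in many elements; these long orthogonality relations must enter the verification of the multi-term Pl\"ucker relations directly. Since your argument only invokes orthogonality for supports meeting in at most three points, even after repairing the construction of $\varphi$ it would at best yield a weak Grassmann--Pl\"ucker function, not an $F$-matroid in the (strong) sense used throughout this paper.
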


\subsection{Minors}
Let $\varphi:E^r\to F$ be a Grassmann–Plücker function with associated matroid $M=M_\varphi$ and $A\subset E$.
\begin{enumerate}[label = (\arabic*)]
    \item (Contraction) Let $\ell$ be the rank of $A$ in $\underline{M}_\varphi$, and let $\{a_1, a_2,\dotsc, a_\ell \}$ be a maximal $\underline{M}_\varphi$-independent subset of $A$. Define $\varphi/A: (E\backslash A)^{r-\ell} \rightarrow F$ by
    \[
     (\varphi/A)(x_1, \dotsc, x_{r-\ell}) := \varphi(x_1, \dotsc, x_{r-\ell}, a_1, \dotsc, a_\ell).
    \]
    The \emph{contraction of $M$ by $A$} is $M/A=M_{\varphi/A}$.
    \item (Deletion) Let $k$ be the rank of $E\backslash A$ in $\underline{M}_\varphi$, and choose $ a_1,\dotsc, a_{r-k} \in A$ such that $\{a_1,\dotsc, a_{r-k}\}$ is a basis of $\underline{M}_\varphi / (E\backslash A)$. Define $\varphi \backslash A: (E\backslash A)^k \rightarrow F$ by
    \[
     (\varphi\backslash A)(x_1, \dotsc, x_k) := \varphi(x_1, \dotsc, x_k, a_1, \dotsc, a_{r-k}).
    \]
    The \emph{deletion of $A$ from $M$} is $M\backslash A=M_{\varphi\backslash A}$.
\end{enumerate}
Contractions and deletions are well-defined due to the following fact.

\begin{lemma}[{\cite[Lemma 4.4]{Baker-Bowler19}}]\label{lemma-minors}
\
\begin{enumerate}
    \item Both $\varphi/A$ and $\varphi \backslash A$ are Grassmann–Plücker functions. Their definitions are independent of all choices up to global multiplication by an element of $F^\times$.
    
    \item $\underline{M}_{\varphi/A} = \underline{M}_\varphi / A$ and $\underline{M}_{\varphi \backslash A} = \underline{M}_\varphi \backslash A$.
    
    \item $(\varphi \backslash A)^* = \varphi^* / A$ and $(\varphi / A)^* = \varphi^* \backslash A$.
\end{enumerate}
\end{lemma}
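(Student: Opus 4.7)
I would treat contraction in detail, since the statements for deletion follow either by a parallel argument or by combining the contraction case with part~(3) and the involutivity $\varphi^{\ast\ast}=\varphi$ (up to $F^\times$). Throughout fix $A\subset E$ of rank $\ell$ in $\underline{M}_\varphi$ together with a maximal $\underline{M}_\varphi$-independent subset $\{a_1,\dotsc,a_\ell\}\subset A$.

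For part~(1), axiom (GP2) for $\varphi/A$ is inherited from (GP2) for $\varphi$ since any permutation of $(x_1,\dotsc,x_{r-\ell})$ commutes with freezing the $a_i$'s in the last $\ell$ slots. For (GP1), extend $\{a_1,\dotsc,a_\ell\}$ to a basis $\{a_1,\dotsc,a_\ell,b_1,\dotsc,b_{r-\ell}\}$ of $\underline{M}_\varphi$; then
\[
 (\varphi/A)(b_1,\dotsc,b_{r-\ell}) \ = \ \varphi(b_1,\dotsc,b_{r-\ell},a_1,\dotsc,a_\ell) \ \neq \ 0
\]
by definition of $\underline{M}_\varphi$. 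For (GP3) on arguments $x_1,\dotsc,x_{r-\ell-1},y_1,\dotsc,y_{r-\ell+1}\in E\backslash A$, I apply the Pl\"ucker relation for $\varphi$ to the $(r-1)$-tuple $(x_1,\dotsc,x_{r-\ell-1},a_1,\dotsc,a_\ell)$ and the $(r+1)$-tuple $(y_1,\dotsc,y_{r-\ell+1},a_1,\dotsc,a_\ell)$: a summand indexed by $k>r-\ell+1$ removes some $a_j$ and inserts it as the first entry of $\varphi(a_j,x_1,\dotsc,x_{r-\ell-1},a_1,\dotsc,a_\ell)$, which then repeats one of the trailing $a_i$'s and vanishes by (GP2); the surviving summands for $k=1,\dotsc,r-\ell+1$ spell out exactly the Pl\"ucker relation for $\varphi/A$.

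Independence of the choice of $\{a_1,\dotsc,a_\ell\}$ reduces via the symmetric exchange property in the restriction $\underline{M}_\varphi|_A$ to the case of a single element swap, and for such a swap a short three-term Pl\"ucker identity applied to $\varphi$ shows that the two resulting functions $\varphi/A$ differ by a unit in $F^\times$; iterating yields the general statement. Part~(2) is then immediate from the construction: $(\varphi/A)(x_1,\dotsc,x_{r-\ell})\neq0$ iff $\{x_1,\dotsc,x_{r-\ell},a_1,\dotsc,a_\ell\}$ is a basis of $\underline{M}_\varphi$, and such subsets biject with bases of the matroid contraction $\underline{M}_\varphi/A$ in the standard way.

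For part~(3), both $(\varphi\backslash A)^\ast$ and $\varphi^\ast/A$ are Grassmann-Pl\"ucker functions on $E\backslash A$ of rank $|E\backslash A|-k$ where $k=\rk_{\underline{M}_\varphi}(E\backslash A)$. I would evaluate each side on a common tuple of arguments, using a complementary pair of subsets of $A$ that serve as the ``frozen'' slots for the deletion of $\varphi$ on one side and for the contraction of $\varphi^\ast$ on the other. The identity then reduces to matching the sign $\sign(\cdot)$ produced by the dual on the full ground set $E$ against the sign produced by the dual inside the smaller ground set $E\backslash A$. I expect the main obstacle to lie precisely in this sign bookkeeping, which should be resolved by tracking the permutation that relocates the fixed $a_i$'s and absorbing the resulting powers of $\epsilon$ via (GP2) and the identity $\epsilon^2=1$. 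The second identity $(\varphi/A)^\ast=\varphi^\ast\backslash A$ then follows by applying the first to $\varphi^\ast$ and invoking $\varphi^{\ast\ast}=\varphi$.
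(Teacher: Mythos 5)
There is no in-paper proof to compare against: the paper states this lemma verbatim as a recalled result, citing \cite[Lemma 4.4]{Baker-Bowler19}, so the only benchmark is the standard argument from that source — which is essentially what you outline. Your contraction argument is sound: the (GP3) verification (Pl\"ucker relation for $\varphi$ applied to the tuples with the $a_i$'s frozen in the trailing slots, with the terms removing some $a_j$ killed by (GP2) because $a_j$ reappears among the frozen entries) is exactly right, as is the basis-extension argument for (GP1) (note the extension automatically lies in $E\backslash A$ by maximality of $\{a_1,\dotsc,a_\ell\}$ in $A$) and the identification of bases in part (2). For well-definedness, the exchange-plus-degenerate-Pl\"ucker idea works, but to extract a \emph{unit} ratio you must choose the auxiliary completing tuple so that both surviving ``coefficient'' values are nonzero, e.g.\ complete $\{a_1,\dotsc,a_\ell\}$ to a basis by elements of $E\backslash A$ and use that the two exchanged independent sets have the same closure; the relation then degenerates to two terms and the tract axiom $a+b\in N_F\Leftrightarrow b=\epsilon a$ gives the constant.

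Two caveats. First, deriving the deletion half of (1)–(2) ``from part (3) and biduality'' is circular as stated: to form $(\varphi\backslash A)^\ast$ you must already know $\varphi\backslash A$ is a Grassmann–Pl\"ucker function, and part (3) is what you are trying to prove. Either run the parallel direct argument for deletion (it goes through verbatim, since the deleted minor also freezes a subset of $A$ — a basis of $\underline{M}_\varphi/(E\backslash A)$ — in the trailing slots), or prove first that $\varphi^\ast/A$ is a GP function and then show by direct comparison that the explicit formula for $\varphi\backslash A$ agrees with $(\varphi^\ast/A)^\ast$ up to a unit. Second, part (3) is left at the level of a plan: the entire content there is precisely the deferred computation, namely checking that the frozen sets can be taken complementary inside $A$ (a basis of $\underline{M}_\varphi/(E\backslash A)$ contained in $A$, whose complement in $A$ is a maximal $\underline{M}_\varphi^\ast$-independent subset of $A$) and that the discrepancy between the sign computed inside $E\backslash A$ and the sign computed inside $E$ is a \emph{global} constant, independent of the evaluation tuple. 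Until that is written out, (3) is not proved; also be explicit that, since the minor and dual constructions are only defined up to $F^\times$, the identities in (3) are equalities of the associated $F$-matroids (equality of functions up to a global unit), not of functions on the nose.
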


\subsection{Examples}

We have mentioned already that usual matroids reappear as $\K$-matroids in Baker-Bowler theory. Other examples are:
\begin{itemize}
 \item Let $K$ be a field. There is a bijection from $K$-matroids of rank $r$ on $E$ to $r$-dimensional $K$-subspaces of $K^E$, given by $M\mapsto \mathcal{V}^*(M)$; see \cite[Prop.\ 2.19]{Anderson19}).
    \item A valuated matroid in the sense of \cite{Dress-Wenzel92} is the same thing as a $\T$-matroid.
    \item There is a bijection from $\S$-matroids to oriented matroids in the sense of \cite{Bland-LasVergnas78}, given by $M\mapsto \big(E, \mathcal{C}(M)\big)$.
\end{itemize}

\section{Flag matroids}
\label{section: flag matroids}

\subsection{Definitions}
\label{subsection: definition}

Let us introduce the central notion of this text: flag matroids with coefficients in tracts. Throughout the whole section, we fix $E=\{1,\dots,n\}$ and integers $r$ and $w$ between $0$ and $n$.

\begin{df}
 Let $M$ and $N$ be $F$-matroids of respective ranks $r$ and $w$ on $E$. We say that $M$ is a \textit{quotient} of $N$, and write $N \twoheadrightarrow M$, if every choice of Grassmann-Pl\"ucker functions $\mu$ and $\nu$ representing $M$ and $N$, respectively, satisfies the \emph{Pl\"ucker flag relations}
\begin{equation}
\label{flag equation}
 \underset{k = 1}{\overset{w+1}{\sum}}\ \epsilon^k \; \nu(y_1, \dotsc, \widehat{y_k}, \dotsc y_{w+1}) \cdot \mu(y_k, x_1, \dotsc, x_{r-1}) \ \in \ N_F
\end{equation}
for all $y_1, \dotsc, y_{w+1},x_1, \dotsc, x_{r-1}\in E$.

A \textit{flag $F$-matroid} on $E$ is a sequence $\textbf{M} = (M_1, \dotsc, M_s)$ of $F$-matroids such that $M_j \twoheadrightarrow M_i$ for all $1 \leq i < j \leq s$. The sequence $\rk(\bM)=\big( \rk(\underline{M_1}), \dotsc, \rk(\underline{M_s}) \big)$ is called the \textit{rank of $\bM$}.
\end{df}

The identification of classical matroids with $\K$-matroids yields an identification of classical flag matroids with flag $\K$-matroids. The proof of this fact relies, however, on the circuit-vector characterization of flag matroids. We postpone this discussion to \autoref{subsection: flag matroids as flag K-matroids}.

\begin{rem}\label{rem: properties of falg matroids}
 The following are some immediate observations.
\begin{enumerate}
    \item Since two Grassmann-Pl\"ucker functions representing the same $F$-matroid only differ by a non-zero factor, the validity of equation \eqref{flag equation} does not depend on the choice of Grassmann-Pl\"ucker functions.
    \item Note that for $N = M$, the Pl\"ucker flag relations in \eqref{flag equation} are nothing else than the usual Pl\"ucker relations of a Grassmann-Pl\"ucker function, see \ref{GP3}. Thus one always has $M \twoheadrightarrow M$.
    \item If $\bM=(M_1,...,M_s)$ is a flag $F$-matroid, then $(M_1,...,\widehat{M_i},...,M_s)$ is also a flag $F$-matroid, for all $i$ in $[s]$.
\end{enumerate}
\end{rem}


\subsection{Functoriality}
\label{subsection: functoriality}

As for single matroids, we can change the coefficients of flag matroids along tract morphisms.

\begin{prop}\label{prop: functoriality}
 Let $f:F\to F'$ be a tract morphism and $N\twoheadrightarrow M$ a quotient of $F$-matroids. Then $f_\ast N\twoheadrightarrow f_\ast M$. Consequently, if $\bM=(M_1,\dotsc,M_s)$ is a flag $F$-matroid, then $f_\ast\bM=(f_\ast M_1,\dotsc,f_\ast M_s)$ is a flag $F'$-matroid.
\end{prop}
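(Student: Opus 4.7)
The proof is essentially a direct unpacking of definitions, using that tract morphisms preserve nullsets, multiplication, and the distinguished element $\epsilon$.

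First I would reduce the second assertion to the first: if $f_\ast N \twoheadrightarrow f_\ast M$ holds whenever $N \twoheadrightarrow M$, then applying this to every consecutive pair in $\bM = (M_1,\dotsc,M_s)$ (and, more generally, every pair $M_j \twoheadrightarrow M_i$ with $i \leq j$) shows that $f_\ast \bM$ is a flag $F'$-matroid. So the content is in the quotient statement.

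For that, fix Grassmann-Pl\"ucker functions $\mu$ for $M$ and $\nu$ for $N$. Since push-forwards along $f$ are defined by post-composition, $f \circ \mu$ represents $f_\ast M$ and $f \circ \nu$ represents $f_\ast N$. Hence by \autoref{rem: properties of falg matroids}(1) it suffices to verify the Pl\"ucker flag relations for these particular representatives. A small preliminary observation: the element $\epsilon$ is characterized in any tract by being the unique unit with $1 + \epsilon \in N_F$. Applying $f$ to $1 + \epsilon \in N_F$ yields $1 + f(\epsilon) \in N_{F'}$, so by uniqueness $f(\epsilon) = \epsilon'$, the distinguished element of $F'$.

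Now fix $y_1, \dotsc, y_{w+1}, x_1, \dotsc, x_{r-1} \in E$. By hypothesis, the sum
\[
 \sum_{k=1}^{w+1} \epsilon^k \, \nu(y_1, \dotsc, \widehat{y_k}, \dotsc, y_{w+1}) \cdot \mu(y_k, x_1, \dotsc, x_{r-1})
\]
lies in $N_F$ (viewed as an element of $\N[F^\times]$, after dropping zero summands). Applying the tract-morphism property of $f$, which sends nullset elements to nullset elements term by term, together with multiplicativity of $f$ and $f(\epsilon) = \epsilon'$, yields
\[
 \sum_{k=1}^{w+1} (\epsilon')^k \, (f\circ\nu)(y_1, \dotsc, \widehat{y_k}, \dotsc, y_{w+1}) \cdot (f\circ\mu)(y_k, x_1, \dotsc, x_{r-1}) \ \in \ N_{F'},
\]
which is precisely the Pl\"ucker flag relation witnessing $f_\ast N \twoheadrightarrow f_\ast M$.

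There is no real obstacle here; the only point demanding any care is the identification $f(\epsilon) = \epsilon'$, which is needed so that the sign pattern $\epsilon^k$ in the Pl\"ucker flag relation is transported correctly. Everything else is formal from the definitions of push-forward and of tract morphism.
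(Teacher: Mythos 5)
Your proof is correct and follows essentially the same route as the paper, which simply observes that the Pl\"ucker flag relations for $f_\ast N\twoheadrightarrow f_\ast M$ have the same shape and are transported into $N_{F'}$ by the definition of a tract morphism; you merely spell out the details the paper leaves implicit, including the useful observation that $f(\epsilon)=\epsilon'$ forced by axiom (T2). Nothing is missing.
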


\begin{proof}
 Since the Pl\"ucker flag relations for $f_\ast N\twoheadrightarrow f_\ast M$ are indexed by the same tuples of elements $x_1,\dotsc,x_{\rk(M)-1},y_1,\dotsc,y_{\rk(N)+1}\in E$ and are of the same shape, the first assertion follows at once from the definition of a morphism of tracts. The second assertion follows at once from the first and the definition of a flag $F$-matroid.
\end{proof}


\subsection{Cryptomorphism}
\label{subsection: cryptomorphism}

The core result of our theory consists of the following cryptomorphic description of flag $F$-matroids in terms of their cocircuits and covectors.

\begin{thm}[Cryptomorphism for flag $F$-matroids]\label{thm-cryptomorphism}
 Let $(M_1,\dotsc,M_s)$ be a sequence of $F$-matroids on $E$ with respective cocircuit sets $\cC^\ast(M_i)$ and covector sets $\cV^\ast(M_i)$. The following are equivalent:
\begin{enumerate}
    \item $(M_1, \dotsc, M_s)$ is a flag $F$-matroid;
    \item \label{cryptomorphism 2} $\mathcal{C}^*(M_i) \subseteq \mathcal{V}^*(M_j)$ for all $1 \leq i < j \leq s$.
\end{enumerate}
\end{thm}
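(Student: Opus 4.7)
The plan is to rewrite the Pl\"ucker flag relation as an inner product of a specific circuit of $M_j$ and (the conjugate of) a specific cocircuit of $M_i$ in a ``generic'' case, and to dispose of the remaining ``degenerate'' cases directly from the tract axioms. Throughout, fix $i<j$ and abbreviate $M=M_i$, $N=M_j$, $r=r_i$, $w=r_j$, with Grassmann-Pl\"ucker representatives $\mu$ and $\nu$. Condition (2) then amounts to the orthogonality $\cC^\ast(M)\perp\cC(N)$.

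First I would introduce, for each $\vec x=(x_1,\ldots,x_{r-1})\in E^{r-1}$ and $\vec y=(y_1,\ldots,y_{w+1})\in E^{w+1}$ with pairwise distinct entries, the auxiliary functions $Z_{\vec x},Y_{\vec y}\in F^E$ given by $Z_{\vec x}(y)=\mu(y,x_1,\ldots,x_{r-1})$ and $Y_{\vec y}(y_k)=\epsilon^k\nu(y_1,\ldots,\widehat{y_k},\ldots,y_{w+1})$, with $Y_{\vec y}$ extended by zero outside $\{y_1,\ldots,y_{w+1}\}$. The left-hand side of the Pl\"ucker flag relation for $(\vec x;\vec y)$ is then precisely $\sum_{y\in E}Y_{\vec y}(y)\,Z_{\vec x}(y)$. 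The key lemma I would establish is: when $\{x_1,\ldots,x_{r-1}\}$ is an independent $(r-1)$-subset of $\underline M$, the conjugate $\overline{Z_{\vec x}}$ is a scalar multiple of a cocircuit $Z^\ast\in\cC^\ast(M)$; and when $\{y_1,\ldots,y_{w+1}\}$ has rank $w$ in $\underline N$, the function $Y_{\vec y}$ is a scalar multiple of a circuit $Y\in\cC(N)$ supported on the unique circuit of $\underline N$ contained in $\{y_1,\ldots,y_{w+1}\}$. Both facts follow from a direct comparison with the Baker-Bowler circuit recipe (together with the duality formula for $\mu^\ast$ in the cocircuit case), and every element of $\cC^\ast(M)$ and $\cC(N)$ is obtained in this way as $\vec x$ and $\vec y$ vary.

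With this reformulation in hand, the equivalence reduces to a case analysis on $(\vec x,\vec y)$. In the degenerate cases the flag relation holds automatically: if $\vec x$ has a repeat or $\{x_1,\ldots,x_{r-1}\}$ is dependent in $\underline M$, then every $\mu$-value vanishes and the sum is $0$; if $\{y_1,\ldots,y_{w+1}\}$ has rank less than $w$ or contains two or more distinct repeated pairs, then every $\nu$-value vanishes; and if exactly one pair $y_a=y_b$ coincides with all other entries distinct, then a short computation using $\epsilon^2=1$ collapses the sum to $\epsilon^a(1+\epsilon)$ times a single monomial, which lies in $N_F$ by axioms (T2) and (T3). In the remaining ``generic'' case, the flag relation reads $(\text{scalar})\cdot(Y\cdot Z^\ast)\in N_F$, which is equivalent to $Y\perp Z^\ast$. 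Both implications then follow: $(1)\Rightarrow(2)$ because every cocircuit/circuit pair arises from some generic $(\vec x;\vec y)$ and the flag relation forces their orthogonality, and $(2)\Rightarrow(1)$ because the generic flag relation is precisely the hypothesis while the degenerate cases are automatic.

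The main technical obstacle I anticipate is the careful tracking of signs and global scalars in the generic case, especially when the involution $\tau$ on $F$ is non-trivial. The Baker-Bowler recipe $X_C(y_i)=\epsilon^i\mu^\ast(z_0,\ldots,\widehat{z_i},\ldots,z_{n-r})$ has to be combined with the duality formula $\mu^\ast(\vec z)=\sign(\cdot)\,\overline{\mu(\cdot)}$, and the resulting permutation signs must then be reconciled with the signs in the inner product $Y\cdot Z^\ast=\sum_y Y(y)\,\overline{Z^\ast(y)}$. Once this bookkeeping is under control, the generic case and the degenerate cases combine uniformly to give the claimed equivalence.
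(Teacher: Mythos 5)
Your plan is correct and follows essentially the same route as the paper: both identify the Pl\"ucker flag sum, via the Baker--Bowler circuit recipe together with the duality formula for $\mu^\ast$, with a unit multiple of the inner product of a circuit of $M_j$ and a cocircuit of $M_i$, and both dispose of the degenerate tuples separately (your explicit $(1+\epsilon)$-collapse for a tuple with a single repeated entry is a case the paper leaves implicit by working with subsets). The only organizational difference is that you isolate this identification as a reusable lemma serving both implications, whereas the paper carries out the sign computation twice, once in each direction.
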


\begin{proof}
It suffices to show that the following assertions are equivalent for two $F$-matroids $M$ and $N$ on $E$:
\begin{enumerate}
    \item \label{cryp1} $M$ is a quotient of $N$;
    \item \label{cryp2} $\mathcal{C}^*(M) \subseteq \mathcal{V}^*(N)$.
\end{enumerate}
As a first step, we show that \eqref{cryp1} implies \eqref{cryp2}. Assume that $M$ is a quotient of $N$. Fix Grassmann-Pl\"ucker functions $\mu$ and $\nu$ that represent $M$ and $N$, respectively. Let $Z \in \mathcal{C}^*(M) = \mathcal{C}(M^*)$. By \cite[p.\ 841]{Baker-Bowler19}, there exists a $z_0 \in \underline{Z}$, an $\alpha \in F^\times$ and a basis $D = \{z_1, \dotsc, z_{n-r}\}$ of $\underline{M}^*$ containing $\underline{Z} - z_0$ such that
\[
Z(z_i) = \alpha\cdot \epsilon^i \cdot \mu^*(z_0, \dotsc, \widehat{z_i} \dotsc, z_{n-r}).
\]
Let $\{x_1, \dotsc, x_{r-1}\} = E \backslash \{z_0, \dotsc, z_{n-r}\}$. Similarly, for $Y \in \mathcal{C}(N)$, there exist an $y_0 \in \underline{Y}$, a $\lambda \in F^{\times}$ and a basis $B = \{y_1, \dotsc, y_w\}$ of $\underline{N}$ containing $\underline{Y} - y_0$ such that
\[
Y(y_j) = \lambda\cdot \epsilon^j \cdot \nu(y_0, \dotsc, \widehat{y_j} \dotsc,y_w).
\]
Let $S=(D\cup\{z_0\})\cap(B\cup\{y_0\})$ and 
\[
  I \ = \ \big\{i \in \{0,\dotsc,n-r\}\, \big| \, z_i\in S\big\} \qquad \text{and} \qquad J \ = \ \big\{j \in \{0,\dotsc,w\} \,\big|\, y_j\in S\big\}.  
\]
There exists a bijection $\mathfrak{b}: J\rightarrow I$ such that $y_j=z_{\mathfrak{b}(j)}$, for all $j\in J$. Note that
\[
\begin{aligned}
      & \;(\lambda \overline{\alpha})^{-1} \cdot \big(Y \cdot Z\big) = (\lambda \overline{\alpha})^{-1} \cdot \underset{e \in E}{\sum} Y(e)\cdot \overline{Z(e)}
\\
    = & \;(\lambda \overline{\alpha})^{-1} \cdot \underset{e \in S}{\sum} \ Y(e) \cdot \overline{Z(e)}
\\
    = & \;(\lambda \overline{\alpha})^{-1} \cdot \underset{j \in J}{\sum} \big( \lambda\cdot \epsilon^j \nu(y_0, \dotsc, \widehat{y_j}, \dotsc, y_w)\big) \cdot\big(\overline{\alpha \cdot \epsilon^{\mathfrak{b}(j)}\mu^*(z_0, \dotsc, \widehat{z_{\mathfrak{b}(j)}} \dotsc,z_{n-r})}\big)
\\
    = & \;\underset{j \in J}{\sum} \ \epsilon^{\big( j + \mathfrak{b}(j) \big)} \cdot \nu(y_0, \dotsc, \widehat{y_j} \dotsc, y_w) \cdot \sign(z_0, \dotsc, \widehat{z_{\mathfrak{b}(j)}}, \dotsc,z_{n-r}, z_{\mathfrak{b}(j)}, x_1, \dotsc, x_{r-1})
\\    
      & \hspace{2cm} \cdot  \mu( z_{\mathfrak{b}(j)}, x_1, \dotsc, x_{r-1})
\\
    = & \;\underset{j\in J}{\sum} \ \epsilon^{\big( j + \mathfrak{b}(j)\big)} \cdot \sign(z_0, \dotsc, \widehat{z_{\mathfrak{b}(j)}}, \dotsc,z_{n-r}, z_{\mathfrak{b}(j)}, x_1, \dotsc, x_{r-1}) \cdot \nu(y_0, \dotsc, \widehat{y_j} \dotsc, y_w) 
\\
      & \hspace{2cm} \cdot \mu( y_j, x_1, \dotsc, x_{r-1}),
\end{aligned}
\]
because $\mu( y_j, x_1, \dotsc, x_r) = 0$ for $j \in \{0, \dotsc, s\}\backslash J$, since in this case $y_j \in \{x_1, \dotsc, x_{r-1}\}$. Further we have for all $j\in J$ that 
\[
\begin{aligned}
      & \; \epsilon^{(n-r)} \cdot \sign(z_0, \dotsc,z_{n-r}, x_1, \dotsc, x_{r-1})
\\
   = & \; \epsilon^{2(n-r) - \mathfrak{b}(j)} \cdot \sign(z_0, \dotsc, \widehat{z_{\mathfrak{b}(j)}}, \dotsc,z_{n-r}, z_{\mathfrak{b}(j)}, x_1, \dotsc, x_{r-1})
\\
    = & \; \epsilon^{\mathfrak{b}(j)} \cdot \sign(z_0, \dotsc, \widehat{z_{\mathfrak{b}(j)}}, \dotsc,z_{n-r}, z_{\mathfrak{b}(j)}, x_1, \dotsc, x_{r-1}).
\end{aligned}
\]
Thus
\[
\begin{aligned}
      & \;\epsilon^{ (n-r)} \cdot \sign(z_0, \dotsc,z_{n-r}, x_1, \dotsc, x_{r-1}) \cdot (\lambda \overline{\alpha})^{-1} \cdot \big( Y \cdot Z \big)
\\
    = & \;\underset{j \in J}{\sum} \ \epsilon^j \cdot \nu(y_0, \dotsc, \widehat{y_j} \dotsc, y_w) \cdot \mu( y_j, x_1, \dotsc, x_{r-1})
\\
    = & \;\underset{j=0}{\overset{w}{\sum}} \ \epsilon^j \cdot \nu(y_0, \dotsc, \widehat{y_j} \dotsc, y_w) \cdot \mu( y_j, x_1, \dotsc, x_{r-1}).
\end{aligned}
\]
This implies that $Y\cdot Z \in N_F$. Therefore $Z \in \mathcal{V}^*(N)$, which shows that \eqref{cryp1} implies \eqref{cryp2}.

Next we show that \eqref{cryp2} implies \eqref{cryp1}. Assume that $\mathcal{C}^*(M)$ is a subset of $\mathcal{V}^*(N)$ and let $\{y_0, \dotsc, y_w\}$ be an $(w+1)$-subset and $\{x_1, \dotsc, x_{r-1}\}$ be an $(r-1)$-subset of $E$.

\medskip\noindent
\textbf{Case 1.} If there is no $\ell$ in $\{0, \dotsc, w\}$ such that $\{y_\ell, x_1,  \dotsc, x_{r-1}\}$ is a basis of $\underline{M}$ and $\{y_0, \dotsc, \widehat{y_\ell},$ $\dotsc, y_w\}$ is a basis of $\underline{N}$, one has that $\nu(y_0, \dotsc, \widehat{y_j}, \dotsc, y_w) \cdot \mu(y_j, x_1, \dotsc, x_{r-1}) = 0$ for all $j \in \{0, \dotsc, w\}$. Thus 
\[
\underset{k = 0}{\overset{w}{\sum}} \epsilon^k \cdot 
\nu( y_0, \dotsc, \widehat{y_k}, \dotsc, y_w) \cdot \mu(y_k, x_1, \dotsc, x_{r-1}) \ = \ 0 \ \in \ N_F.
\]

\medskip\noindent
\textbf{Case 2.} If there is an $\ell$ in $\{0, \dotsc, w\}$ such that $\{y_\ell, x_1,  \dotsc, x_{r-1}\}$ is a basis of $\underline{M}$ and $\{y_0, \dotsc, \widehat{y_\ell}, \dotsc, y_w\}$ is a basis of $\underline{N}$, then $\{z_1, \dotsc, z_{n-r}\} = E-\{y_\ell,x_1,  \dotsc, x_{r-1}\}$ is a basis of $\underline{M^*}$. Define $z_0 := y_\ell$. Then
\begin{equation*}
    H(z) \ = \ 
    \begin{cases}
      \epsilon^i \cdot \mu^*( z_0, \dotsc, \widehat{z_i}, \dotsc, z_{n-r} ) & \text{if}\ z=z_i \text{ for some}\ i\in\{0, \dotsc, n-r\}, \\
      0 & \text{otherwise,}
    \end{cases}\
\end{equation*}
defines a circuit of $M^*$ and 
\begin{equation*}
    G(y) \ = \ 
    \begin{cases}
      \epsilon^j \cdot \nu(y_0, \dotsc, \widehat{y_j}, \dotsc, y_w) & \text{if}\ y=y_j \text{ for some}\ j\in\{0, \dotsc, w\},\\
      0 & \text{otherwise,}
    \end{cases}
\end{equation*}
defines a circuit of $N$.

Note that 
\[
H(z_i) = \epsilon^{i} \cdot \sign(z_0, \dotsc, \widehat{z_i}, \dotsc, z_{n-r}, z_i, x_1, \dotsc, x_{r-1}) \cdot \overline{\mu(z_i, x_1, \dotsc, x_{r-1})},
\]
for all $i \in \{0, \dotsc, n-r\}$. Let $S=\{z_0, \dotsc, z_{n-r}\}\cap\{y_0, \dotsc, y_w\}$ and 
\[
  I \ = \ \big\{i \in \{0,\dotsc,n-r\}\, \big| \, z_i\in S\big\} \qquad \text{and} \qquad J \ = \ \big\{j \in \{0,\dotsc,w\} \,\big|\, y_j\in S\big\}.  
\]
There exists a bijection $\mathfrak{b}: J\rightarrow I$ such that $y_j=z_{\mathfrak{b}(j)}$, for all $j\in J$. Since $G \cdot H$ is in $N_F$, we have 
\[
\begin{aligned}
        &\ \underset{k = 0}{\overset{w}{\sum}} \ \epsilon^k \cdot \nu(y_0, \dotsc, \widehat{y_k}, \dotsc, y_w) \cdot \overline{\mu(y_k, x_1, \dotsc, x_{r-1})} \\
     =  &\ \underset{j\in J}{\sum} \ \epsilon^j \cdot \nu(y_0, \dotsc, \widehat{y_j}, \dotsc, y_w) \cdot \overline{\mu(z_{\mathfrak{b}(j)}, x_1, \dotsc, x_{r-1})} \\
     =  &\ \epsilon^{(n-r)} \cdot \sign(z_0, \dotsc,z_{n-r}, x_1, \dotsc, x_{r-1}) \cdot \underset{j\in J}{\sum} G(y_j) \cdot \overline{H(z_{\mathfrak{b}(j)})} \\
     =  &\ \epsilon^{(n-r)} \cdot \sign(z_0, \dotsc,z_{n-r}, x_1, \dotsc, x_{r-1}) \cdot \underset{e \in E}{\sum} G(e) \cdot \overline{H(e)} \\
     =  &\ \epsilon^{(n-r)} \cdot \sign(z_0, \dotsc,z_{n-r}, x_1, \dotsc, x_{r-1}) \cdot G \cdot H \ \in \ N_F,
\end{aligned}
\]
which concludes the proof of the theorem.  
\end{proof}

\begin{rem}
 \autoref{thm-cryptomorphism} shows that our definition of quotients extends the concept of quotients of oriented matroids; see \cite[Def. 7.7.2]{Bjorner-LasVergnas-Sturmfels-White-Ziegler99}.

 The characterization \eqref{cryptomorphism 2} of flag matroids can be seen, in fact, as an expansion of the concept of dual pairs of $F$-signatures since $\cC^\ast(M_i)\subset\cV^\ast(M_j)$ if and only if $\mathcal{C}^*(M_i) \perp \mathcal{C}(M_j)$. This latter form of condition \eqref{cryptomorphism 2} exhibits at once the duality property of flag matroids; see \autoref{rem-cryptomorphism}.
\end{rem}

\begin{cor}
 Let $F$ be a tract and $M$ and $N$ two $F$-matroids such that $N\twoheadrightarrow M$. Then $\rk({N})=\rk({M})$ implies $N=M$.
\end{cor}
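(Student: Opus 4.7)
The plan is to combine the cryptomorphism of \autoref{thm-cryptomorphism} with the dual pair theorem \autoref{thm-dual pairs}. The strategy proceeds in two main steps: first reduce to the case $\underline M=\underline N$, and then use uniqueness of the associated dual pair to match the $F$-matroid structures lying above this common underlying matroid.

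First I would show that $\underline M=\underline N$. By \autoref{prop: functoriality}, pushing forward the quotient $N\twoheadrightarrow M$ along the terminal morphism $t_F\colon F\to\K$ yields a quotient $t_{F,\ast}N\twoheadrightarrow t_{F,\ast}M$ of $\K$-matroids, i.e.\ a usual matroid quotient $\underline N\twoheadrightarrow\underline M$ of equal rank $r$. For usual matroids, the quotient condition is equivalent to $\rk_{\underline M}(A)-\rk_{\underline M}(B)\leq\rk_{\underline N}(A)-\rk_{\underline N}(B)$ for all $B\subseteq A\subseteq E$. Taking $B=\emptyset$ gives $\rk_{\underline M}\leq\rk_{\underline N}$, and taking $A=E$ together with $\rk_{\underline M}(E)=\rk_{\underline N}(E)$ gives the reverse inequality; hence $\rk_{\underline M}=\rk_{\underline N}$ and $\underline M=\underline N$. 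Call this common underlying matroid $N_0$.

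Next, by \autoref{thm-cryptomorphism}, the hypothesis $N\twoheadrightarrow M$ yields $\cC^\ast(M)\subseteq\cV^\ast(N)=\cC(N)^\perp$, i.e.\ the orthogonality $\cC(N)\perp\cC^\ast(M)$. Since $\cC(N)$ is an $F$-signature of $\underline N=N_0$ and $\cC^\ast(M)$ is an $F$-signature of $\underline M^\ast=N_0^\ast$, the pair $\bigl(\cC(N),\cC^\ast(M)\bigr)$ is a dual pair of $F$-signatures of $N_0$. By \autoref{thm-dual pairs}, this dual pair comes from a unique $F$-matroid $M'$ with $\underline{M'}=N_0$, $\cC(M')=\cC(N)$, and $\cC^\ast(M')=\cC^\ast(M)$. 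Since an $F$-matroid is determined by its set of $F$-circuits (\autoref{rem 1.4}), the equality $\cC(M')=\cC(N)$ forces $M'=N$; applying the same principle to $M^\ast$ and $(M')^\ast$ via $\cC\bigl((M')^\ast\bigr)=\cC^\ast(M')=\cC^\ast(M)=\cC(M^\ast)$ forces $(M')^\ast=M^\ast$ and hence $M'=M$. Therefore $N=M'=M$.

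The main obstacle is the reduction to equality of the underlying matroids; everything afterwards is a direct application of Baker and Bowler's cryptomorphic descriptions. If one wished to avoid invoking the classical strong-map fact for usual matroids, an alternative is to manipulate the mixed Pl\"ucker flag relations directly (which, for $\rk(N)=\rk(M)$, have the same shape as ordinary Pl\"ucker relations) and show that representing Grassmann-Pl\"ucker functions $\mu$ and $\nu$ can be taken proportional, but this essentially reproduces the same content in less packaged form.
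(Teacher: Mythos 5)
Your argument is correct. The first half coincides with the paper's: both push the quotient forward along $t_F\colon F\to\K$, invoke \autoref{prop: common flag matroids} to get $\underline N\twoheadrightarrow\underline M$, and then use classical strong-map facts (the paper cites Kung directly, you spell out the rank-difference characterization) to conclude $\underline N=\underline M$. Where you diverge is the second half. The paper proves the intermediate equality $\cC^\ast(M)=\cC^\ast(N)$ directly: it uses Anderson's result that $\cC^\ast(N)$ consists of the nonzero covectors of $N$ of minimal support to get $\cC^\ast(M)\subseteq\cC^\ast(N)$, and then the signature axioms \ref{C1}--\ref{C2} for the reverse inclusion. You instead observe that $\bigl(\cC(N),\cC^\ast(M)\bigr)$ is a dual pair of $F$-signatures of the common underlying matroid (orthogonality coming from $\cC^\ast(M)\subseteq\cV^\ast(N)=\cC(N)^\perp$), produce via \autoref{thm-dual pairs} an $F$-matroid $M'$ with $\cC(M')=\cC(N)$ and $\cC^\ast(M')=\cC^\ast(M)$, and conclude $N=M'=M$ since (co)circuits determine the $F$-matroid. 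Your route trades Anderson's minimal-support characterization of cocircuits among covectors for the Baker--Bowler dual pair correspondence, which is arguably a more self-contained use of the background already quoted in the paper; the paper's route is more hands-on and yields the explicit identity $\cC^\ast(M)=\cC^\ast(N)$ along the way. Both are complete; note that in your version only surjectivity of the dual pair correspondence is actually needed, since once $\cC(M')=\cC(N)$ gives $M'=N$, the equality $\cC^\ast(N)=\cC^\ast(M)$ and double duality already force $M=N$.
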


\begin{proof}
One has $\underline{N}\twoheadrightarrow \underline{M}$ by \autoref{prop: common flag matroids} and $\mathcal{C}^*(M) \subseteq \mathcal{V}^*(N)$ by \autoref{thm-cryptomorphism}. As $\rk(\underline{N})=\rk(\underline{M})$, one also has $\underline{N}=\underline{M}$ (see \cite[Prop.\ 8.1.6 and Lemma 8.1.7]{Kung86}). As the (co)circuit set characterizes the $F$-matroid (see \cite[Thm. 3.17]{Baker-Bowler19}), everything follows if we can show that $\mathcal{C}^*(M) = \mathcal{C}^*(N)$.

Let $Z\in \mathcal{C}^*(M)$. Then $Z \in \mathcal{V}^*(N)$ and $\underline{Z} \in \mathcal{C}^*(\underline{M}) = \mathcal{C}^*(\underline{N}) = \textup{supp}\big(\mathcal{C}^*(N)\big)$. As $\mathcal{C}^*(N)$ is equal to the set of nonzero covectors of minimal support by \cite[Thm. 2.18]{Anderson19}, one has that $Z\in \mathcal{C}^*(N)$. Therefore $\mathcal{C}^*(M)\subseteq \mathcal{C}^*(N)$.

Let $X\in \mathcal{C}^*(N)$. Then there exists an $Y\in \mathcal{C}^*(M)$ such that $\underline{X}=\underline{Y}$. As $\mathcal{C}^*(N)$ satisfies \ref{C2}, there is an $\alpha\in F^\times$ such that $X=\alpha Y$. As $\mathcal{C}^*(M)$ satisfies \ref{C1}, $X\in \mathcal{C}^*(M)$. This shows that $\mathcal{C}^*(M) = \mathcal{C}^*(N)$, which implies $M=N$.
\end{proof}

\begin{rem}[Exterior algebra description of flag matroids]\label{rem: exterior algebras}
 The identification of $F$-matroids with classes of exterior $F$-algebras from the first author's paper \cite{Jarra22} leads to yet another description of flag matroids. To explain, the exterior algebra $\Lambda F^E$ is an $F$-module that generalizes exterior algebras of vector spaces and the Giansiracusa exterior algebra from \cite{Giansiracusa-Giansiracusa18} to all idylls $F$, which are tracts with additively closed nullset $N_F$; we refer the reader to \cite{Jarra22} for details.
 
 A Grassmann-Pl\"ucker function $\mu:E^r\to F$ determines an element $\nu\in\Lambda^r F^E$ with coordinates $\nu_\bI=\mu(\bI)$ for $\bI\in E^r$. This association yields a bijection between $F$-matroids of rank $r$ on $E$ and $F^\times$-classes $[\nu]$ of elements $\nu\in\Lambda^r F$ that satisfy the Pl\"ucker relations.
 
 Thus a flag $F$-matroid of rank $(r_1,\dotsc,r_s)$ on $E$ corresponds to a tuple $\big([\nu_1],\dotsc,[\nu_s]\big)$ of $F^\times$-classes $[\nu_i]$ of elements $\nu_i\in\Lambda^{r_i}F^E$ that satisfy the Pl\"ucker flag relations
 \[
  0 \ \leq \ \sum_{k=1}^{r_j+1} \epsilon^k \; \nu_{i,(y_k,x_1,\dotsc,x_{r_i-1})} \nu_{j,(y_1,\dotsc,\widehat{y_k},\dotsc,y_{r_j+1})}
 \]
 in $\Lambda F^E$ for all $1\leq i\leq j\leq n$ and $x_1,\dotsc,x_{r_i-1},y_1,\dotsc,y_{r_j+1}\in E$.
\end{rem}


\subsection{Flag matroids as flag \texorpdfstring{$\K$}{K}-matroids}
\label{subsection: flag matroids as flag K-matroids}

The realization of matroids as $\K$-matroids extends to flag matroids as explained in the following.

Let us recall the notion of a flag matroid from \cite{Borovik-Gelfand-Vince-White97}. Given two matroids $M$ and $N$ on the same ground set $E$, we say that \emph{$M$ is a quotient of $N$} and write $N\twoheadrightarrow M$ if the identity on $E$ is a strong map from $N$ to $M$, i.e., every flat of $M$ is a flat of $N$ or, equivalently, every cocircuit of $M$ is a union of cocircuits of $N$; see \cite[Prop.\ 8.1.6]{Kung86} for details. A \emph{flag matroid} is a sequence $(M_1,\dotsc,M_s)$ of matroids such that $M_i$ is a quotient of $M_{i+1}$ for $i=1,\dotsc,s-1$.

\begin{prop}[Classical flag matroids] \label{prop: common flag matroids}
Let $M$ and $N$ be $\K$-matroids on $E$. Then $M$ is a quotient of $N$ if and only if $\underline{M}$ is a quotient of $\underline{N}$. In consequence, a sequence $(M_1,\dotsc,M_s)$ of $\K$-matroids is a flag $\K$-matroid if and only if $(\underline{M_1},\dotsc,\underline{M_s})$ is a flag matroid.
\end{prop}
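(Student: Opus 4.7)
The plan is to reduce the proposition to the cryptomorphic description of quotients provided by \autoref{thm-cryptomorphism}, which asserts that $M$ is a quotient of $N$ if and only if $\cC^\ast(M) \subseteq \cV^\ast(N)$. The remaining task is then purely to translate this orthogonality condition over $\K$ into the classical strong-maps characterization of matroid quotients.

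First, I would unpack Baker-Bowler theory over $\K$. Since $\K^\times=\{1\}$, the set $\K^E$ is canonically identified with the power set $2^E$ via the support map, and under this identification the $\K$-circuits of $M$ are exactly the circuits of $\underline M$, and the $\K$-cocircuits are exactly the cocircuits of $\underline M$. Next, for $X,Y\in\K^E$ the inner product $X\cdot Y=\sum_{i\in E}X_i\overline{Y_i}$ lies in $\N[\K^\times]\cong\N$ and equals $|X\cap Y|$. Since $N_\K=\N-\{1\}$, one has $X\perp Y$ if and only if $|X\cap Y|\neq 1$. Therefore $\cV^\ast(N)=\cC(N)^\perp$ consists of those subsets of $E$ meeting every circuit of $\underline N$ in a set of size $\neq 1$.

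The key classical fact I would invoke is that, for an ordinary matroid $\underline N$, the subsets of $E$ that intersect every circuit in a set of cardinality $\neq 1$ are precisely the (possibly empty) unions of cocircuits of $\underline N$; equivalently, the complements of flats (see \cite[Prop.\ 8.1.6]{Kung86}). Feeding this into \autoref{thm-cryptomorphism} gives that $M$ is a quotient of $N$ (as $\K$-matroids) if and only if every cocircuit of $\underline M$ is a union of cocircuits of $\underline N$, which is exactly the classical condition that $\underline N\twoheadrightarrow\underline M$.

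The statement about sequences then follows by applying the equivalence to each pair $(M_i,M_j)$ with $1\leq i<j\leq s$; equivalently, by the definition of a flag matroid, one only needs the equivalence for consecutive pairs. The only subtle step is the identification of $\K$-covectors with unions of cocircuits of the underlying matroid; everything else is a direct unwinding of definitions once the cryptomorphism theorem is in hand.
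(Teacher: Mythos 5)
Your proposal is correct and follows essentially the same route as the paper: both reduce the statement via \autoref{thm-cryptomorphism} to the condition $\cC^\ast(M)\subseteq\cV^\ast(N)$ and then identify the $\K$-covectors of $N$ with the unions of cocircuits of $\underline{N}$, so that the condition becomes the classical strong-map characterization of $\underline{N}\twoheadrightarrow\underline{M}$. The only difference is that the paper obtains the covector identification by citing Anderson (Prop.~5.2 of that paper), whereas you derive it directly from the orthogonality definition over $\K$ (a subset is a covector iff it meets every circuit in $\neq 1$ elements, i.e.\ iff its complement is a flat, i.e.\ iff it is a union of cocircuits); this makes your argument a bit more self-contained but is not a genuinely different method. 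One small point to tighten in the last step: a flag $\K$-matroid requires $M_j\twoheadrightarrow M_i$ for \emph{all} $i<j$, while a classical flag matroid only requires quotients between consecutive members, so after applying the pairwise equivalence you still need that quotients of ordinary matroids are composable (strong maps compose) to pass from consecutive pairs to all pairs -- the paper invokes exactly this fact in one clause, and your phrasing glosses over it.
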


\begin{proof}
 Baker-Bowler theory provides a bijection between $\cC^\ast(N)$ and the cocircuit set of $\underline N$, which sends a cocircuit $C:E\to\K$ of $N$ to its support $\underline C$. By \cite[Prop. 5.2]{Anderson19}, this association extends to a bijection between $\cV^\ast(N)$ and unions of cocircuits of $N$. Therefore $\underline M$ is a quotient of $\underline N$, i.e., every cocircuit of $M$ is a union of cocircuits of $N$, if and only if $\cC^\ast(M)\subset\cV^\ast(N)$. By \autoref{thm-cryptomorphism}, the latter property is equivalent to $M$ being a quotient of $N$, which establishes the first claim of the proposition.
 
 The second claim follows from the analogous definitions of flag $\K$-matroids and flag matroids, taking into account that strong maps of classical matroids are composable.
\end{proof}

\begin{rem}\label{rem: crypto with base exchange}
 An alternative proof of \autoref{prop: common flag matroids} is as follows. It is known that $\underline N$ is a quotient of $\underline M$ if and only if for every basis $B_N$ of $\underline N$, for every basis $B_M$ of $\underline M$ and for every $e\in B_N-B_M$ there is an $f\in B_M-B_N$ such that $B_N-e+f$ is a basis of $\underline N$ and $B_M-f+e$ is a basis of $\underline M$; cf.\ \cite{mathoverflow302574} as well as \cite{Tardos85,Bouchet87,Bouchet89}. This latter condition is directly equivalent to the Pl\"ucker flag relations for $M\twoheadrightarrow N$.
\end{rem}


\begin{cor}
Let $F$ be a tract and $\bM = (M_1, \dotsc, M_s)$ a flag $F$-matroid. Then $\rk(\bM)$ is a non-decreasing sequence of natural numbers.
\end{cor}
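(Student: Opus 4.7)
The plan is to reduce the statement to the classical fact about ordinary matroid quotients via two successive reductions.

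First, by \autoref{prop: functoriality} applied to the terminal morphism $t_F \colon F \to \K$, the push-forward $t_{F,\ast}\bM = (t_{F,\ast}M_1, \dotsc, t_{F,\ast}M_s)$ is a flag $\K$-matroid. Since push-forwards preserve the underlying matroid, $\underline{t_{F,\ast}M_i} = \underline{M_i}$, so the rank sequence $\rk(\bM)$ is unchanged by this push-forward. In particular, it suffices to prove the statement for flag $\K$-matroids.

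Second, by \autoref{prop: common flag matroids}, the flag $\K$-matroid $t_{F,\ast}\bM$ corresponds to an ordinary flag matroid $(\underline{M_1}, \dotsc, \underline{M_s})$, meaning that $\underline{M_i}$ is a quotient of $\underline{M_{i+1}}$ in the classical sense for each $i = 1, \dotsc, s-1$.

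Finally, I would invoke the classical fact that if $N \twoheadrightarrow M$ is a quotient of ordinary matroids on the same ground set $E$, then $\rk(M) \leq \rk(N)$. This follows directly from the characterization that the identity on $E$ is a strong map from $N$ to $M$, since strong maps cannot increase rank (equivalently, every flat of $M$ is a flat of $N$ of at least as large rank, so the top flat $E$ has smaller rank in $M$ than in $N$). Applying this inductively to the chain $M_s \twoheadrightarrow \dotsb \twoheadrightarrow M_1$ yields $\rk(\underline{M_1}) \leq \dotsb \leq \rk(\underline{M_s})$, which is the claim. There is no real obstacle here; the whole argument is a clean chase through the functoriality of $\underline{(-)}$ and the already established identification of $\K$-matroid quotients with classical matroid quotients.
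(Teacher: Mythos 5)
Your proposal is correct and follows essentially the same route as the paper: push forward along the terminal morphism $t_F\colon F\to\K$ via \autoref{prop: functoriality}, identify with an ordinary flag matroid via \autoref{prop: common flag matroids}, and conclude with the classical fact that a matroid quotient cannot increase rank (which the paper cites as \cite[Lemma 8.1.7]{Kung86} rather than sketching, but this is the same step).
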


\begin{proof}
 Since the rank of an $F$-matroid $M$ is equal to the rank of its underlying matroid $\underline M$, we need only to show that $N\twoheadrightarrow M$ implies $\rk(\underline{N})\geq \rk(\underline{M})$. Let $t_F:F\to\K$ be the terminal map. By \autoref{prop: functoriality}, we have $t_{F,\ast} N\twoheadrightarrow t_{F,\ast} M$, and by \autoref{prop: common flag matroids}, we have $\underline{N}=\underline{t_{F,\ast} N}\twoheadrightarrow \underline{t_{F,\ast} M}=\underline{M}$. By \cite[Lemma 8.1.7]{Kung86}, we have $\rk(\underline{N})\geq \rk(\underline{M})$, as desired.
\end{proof}


\subsection{Duality}
\label{subsection: duality}

Thanks to the cryptomorphism from \autoref{thm-cryptomorphism}, many standard properties of matroids fall into their places, the first one being duality.

\begin{prop}
\label{prop-duality}
Let $M$ and $N$ be $F$-matroids on $E$. Then $N \twoheadrightarrow M$ is equivalent to $M^* \twoheadrightarrow N^*$.
\end{prop}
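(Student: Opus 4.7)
The plan is to reduce the claim to the cryptomorphism of \autoref{thm-cryptomorphism} together with a symmetry of the orthogonality relation. First, I would translate both sides of the equivalence into the language of circuits and covectors. By \autoref{thm-cryptomorphism}, $N \twoheadrightarrow M$ is equivalent to $\mathcal{C}^\ast(M)\subseteq \mathcal{V}^\ast(N)=\mathcal{C}(N)^\perp$, which is just $\mathcal{C}^\ast(M)\perp\mathcal{C}(N)$. Applying the same theorem to the pair $M^\ast, N^\ast$, the relation $M^\ast \twoheadrightarrow N^\ast$ is equivalent to $\mathcal{C}^\ast(N^\ast)\subseteq \mathcal{V}^\ast(M^\ast)$.

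Next, I would simplify this latter condition using the duality involution $M\mapsto M^\ast$ and the defining identities of cocircuits, covectors and circuits recalled in \autoref{section: Baker-Bowler theory}. Since $N^{\ast\ast}=N$, one has $\mathcal{C}^\ast(N^\ast)=\mathcal{C}(N^{\ast\ast})=\mathcal{C}(N)$, and $\mathcal{V}^\ast(M^\ast)=\mathcal{C}(M^\ast)^\perp=\mathcal{C}^\ast(M)^\perp$. So $M^\ast \twoheadrightarrow N^\ast$ is equivalent to $\mathcal{C}(N)\perp \mathcal{C}^\ast(M)$.

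Finally, the two conditions $\mathcal{C}^\ast(M)\perp\mathcal{C}(N)$ and $\mathcal{C}(N)\perp\mathcal{C}^\ast(M)$ are the same, because the orthogonality relation on $F^E$ is symmetric: for $X,Y\in F^E$ one has $Y\cdot X=\sum_{i\in E} Y_i\,\overline{X_i}=\overline{\sum_{i\in E}X_i\,\overline{Y_i}}=\overline{X\cdot Y}$, and since $\tau$ is a tract morphism, $X\cdot Y\in N_F$ if and only if $\overline{X\cdot Y}\in N_F$.

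No serious obstacle is anticipated here; the whole argument is a one-step consequence of the cryptomorphism plus the symmetry of $\perp$. The only subtlety worth making explicit is that the orthogonality relation is symmetric despite being defined via the involution, which I would highlight in a single sentence so that the reader is not left wondering why the switch from $X\perp Y$ to $Y\perp X$ is harmless.
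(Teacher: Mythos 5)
Your proof is correct and takes essentially the same route as the paper: both arguments reduce the claim to \autoref{thm-cryptomorphism} together with the identities $\mathcal{C}^\ast(N^\ast)=\mathcal{C}(N)$ and $\mathcal{V}^\ast(M^\ast)=\mathcal{C}^\ast(M)^\perp$, the paper proving one implication via a perp-inclusion chain and invoking the symmetry of the statement, while you translate both sides into the single condition $\mathcal{C}^\ast(M)\perp\mathcal{C}(N)$. Your explicit verification that the pairing is symmetric (via $Y\cdot X=\overline{X\cdot Y}$ and the fact that the involution $\tau$ preserves $N_F$) is a point the paper uses only tacitly, and it matches the list of equivalent conditions recorded afterwards in \autoref{rem-cryptomorphism}.
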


\begin{proof}
By the symmetry of the affirmation, it is enough to prove only one implication. If $N \twoheadrightarrow M$, by \autoref{thm-cryptomorphism}, one has $\mathcal{C}(M^*)=\mathcal{C}^*(M)\subseteq \mathcal{V}^*(N)$. Thus
\[
 \mathcal{C}^*(N^*) = \mathcal{C}(N) \subseteq \mathcal{V}^*(N)^{\perp} \subseteq \mathcal{C}(M^*)^{\perp} = \mathcal{V}^*(M^*). 
\]
Again by \autoref{thm-cryptomorphism}, we conclude that $M^* \twoheadrightarrow N^*$.
\end{proof}

\begin{rem}
\label{rem-cryptomorphism}
Putting \autoref{thm-cryptomorphism} and \autoref{prop-duality} together, one has that the following are equivalent:
\[
 \begin{aligned}
   & (1)\ N\twoheadrightarrow M; 
  && (3)\ \mathcal{C}(N)\subset\mathcal{V}(M); 
  && (5)\ \mathcal{C}(N)\perp\mathcal{C}^\ast(M); \\
   & (2)\ M^\ast\twoheadrightarrow N^\ast; \qquad
  && (4)\ \mathcal{C}^*(M)\subset\mathcal{V}^*(N); \qquad 
  && (6)\ \mathcal{C}^\ast(M)\perp\mathcal{C}(N). 
 \end{aligned}
\]
\end{rem}

\begin{thm}[Duality for flag matroids] \label{thm: duality}
Let $E$ be a set with $n$ elements and let $0\leq r_1 \leq \dotsc \leq r_s \leq n$ be integers. The association \[
\bM = (M_1, \dotsc, M_s) \longmapsto \bM^* := (M_s^*, \dotsc, M_1^*) 
\]
is a bijection between the flag $F$-matroids of rank $(r_1, \dotsc, r_s)$ and the flag $F$-matroids of rank $(n-r_s, \dotsc, n-r_1)$.
\end{thm}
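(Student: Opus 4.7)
The proof should be a straightforward application of the duality for single quotients (Proposition prop-duality) combined with careful index bookkeeping. I would split it into two tasks: verifying that $\bM^\ast$ really is a flag $F$-matroid of the predicted rank, and checking that the association is bijective.

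For the first task, let $\bM=(M_1,\dots,M_s)$ be a flag $F$-matroid of rank $(r_1,\dots,r_s)$. Since the dual of an $F$-matroid of rank $r$ on $E$ has rank $n-r$, the sequence $\bM^\ast=(M_s^\ast,\dots,M_1^\ast)$ consists of $F$-matroids whose ranks read $(n-r_s,\dots,n-r_1)$, which is nondecreasing because $r_1\le\dots\le r_s$. To check the quotient relations, pick indices $1\le k<\ell\le s$ of the new sequence $\bM^\ast$; the corresponding entries are $M_{s-k+1}^\ast$ and $M_{s-\ell+1}^\ast$. Setting $a=s-\ell+1$ and $b=s-k+1$ gives $1\le a<b\le s$, and the hypothesis that $\bM$ is a flag $F$-matroid yields $M_b\twoheadrightarrow M_a$. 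Applying Proposition~\ref{prop-duality} converts this to $M_a^\ast\twoheadrightarrow M_b^\ast$, i.e.\ to the required quotient relation inside $\bM^\ast$. Hence $\bM^\ast$ is a flag $F$-matroid of the claimed rank.

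For the second task, observe that the construction is symmetric in the sense that the same formula defines an association in the other direction: from flag $F$-matroids of rank $(n-r_s,\dots,n-r_1)$ to flag $F$-matroids of rank $\bigl(n-(n-r_1),\dots,n-(n-r_s)\bigr)=(r_1,\dots,r_s)$. Because $(M_i^\ast)^\ast=M_i$ for every $F$-matroid $M_i$, iterating the map gives
\[
 (\bM^\ast)^\ast \ = \ \bigl((M_1^\ast)^\ast,\dots,(M_s^\ast)^\ast\bigr) \ = \ (M_1,\dots,M_s) \ = \ \bM,
\]
and similarly $(\bN^\ast)^\ast=\bN$ on the other side. Thus the two maps are mutually inverse, and the association is a bijection.

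There is no real obstacle here beyond bookkeeping: all the analytic content is packed into Proposition~\ref{prop-duality}, which itself rests on the cryptomorphism of Theorem~\ref{thm-cryptomorphism}. The only point that needs mild care is the reversal of the order in the sequence, which is what makes the ranks line up monotonically and what makes the quotient relations dualize correctly.
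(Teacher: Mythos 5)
Your proposal is correct and follows essentially the same route as the paper: dualize each quotient relation via Proposition~\ref{prop-duality}, read off the ranks from $\underline{M^\ast}=\underline{M}^\ast$, and obtain bijectivity from $(M^\ast)^\ast=M$. The only difference is that you spell out the index bookkeeping explicitly, which the paper leaves implicit.
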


\begin{proof}
Let $\bM = (M_1, \dotsc, M_s)$ be a flag $F$-matroid of rank $(r_1, \dotsc, r_s)$. As $M_j\twoheadrightarrow M_i$, one has $M_i^*\twoheadrightarrow M_j^*$ for all $s\geq j > i \geq 1$, which means that $\textbf{M}^* := (M_s^*, \dotsc, M_1^*)$ is a flag $F$-matroid. Note that 
\[
 \rk(\bM^*) = \big(\rk(\underline{M_s^*}), \dotsc, \rk(\underline{M_1^*})\big) = (n-r_s, \dotsc, n-r_1), 
\]
because $\underline{M^*} = \underline{M}^*$ by \cite[Theorem 3.24]{Baker-Bowler19}.

Again by \cite[Theorem 3.24]{Baker-Bowler19}, one has $(M^*)^* = M$ for all $F$-matroids $M$, which implies that $(\bM^*)^* = \bM$ for all flag $F$-matroids. This finishes the proof.
\end{proof}


\subsection{Minors}
\label{subsection: minors}

Minors of flag matroids are defined by taking minors of the components of the flag. This leads to a meaningful notion of minors due to the following fact.

\begin{prop}
\label{prop-minor}
Let $M$ and $N$ be $F$-matroids on $E$ such that $N\twoheadrightarrow M$, and let $e$ be an element of $E$. Then $N/e \twoheadrightarrow M/e$ and $N\backslash e \twoheadrightarrow M\backslash e$.
\end{prop}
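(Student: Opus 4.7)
The plan is to prove the contraction statement $N/e \twoheadrightarrow M/e$ directly from the Pl\"ucker flag relations, and then to deduce the deletion statement $N\backslash e \twoheadrightarrow M\backslash e$ from duality. For the latter, \autoref{prop-duality} together with \autoref{lemma-minors}(3) identifies $N\backslash e \twoheadrightarrow M\backslash e$ with $M^\ast/e \twoheadrightarrow N^\ast/e$, and the hypothesis $N \twoheadrightarrow M$ gives $M^\ast \twoheadrightarrow N^\ast$ again by \autoref{prop-duality}; thus the contraction result applied to the pair $(M^\ast, N^\ast)$ yields the deletion result.

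To establish the contraction, fix Grassmann--Pl\"ucker functions $\nu$ of $N$ (rank $w$) and $\mu$ of $M$ (rank $r$), and set $\ell_N = \rk_{\underline N}(\{e\})$ and $\ell_M = \rk_{\underline M}(\{e\})$, each in $\{0,1\}$. By \autoref{prop: common flag matroids} one has $\underline N \twoheadrightarrow \underline M$, so every loop of $\underline N$ lies in the flat $\mathrm{cl}_{\underline M}(\emptyset)$, giving the implication $\ell_N = 0 \Rightarrow \ell_M = 0$; this collapses the \emph{a priori} four-way case analysis on $(\ell_N, \ell_M)$ to the three cases $(0,0)$, $(1,0)$ and $(1,1)$.

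Writing $\vec a_N = (e)$ if $\ell_N = 1$ and $\vec a_N = ()$ otherwise (and analogously for $\vec a_M$), the contraction formula from \autoref{lemma-minors} gives $(\nu/e)(\vec y) = \nu(\vec y, \vec a_N)$ and $(\mu/e)(\vec x) = \mu(\vec x, \vec a_M)$. Consequently, for any $\vec y \in (E\backslash e)^{w-\ell_N + 1}$ and $\vec x \in (E\backslash e)^{r-\ell_M-1}$, the Pl\"ucker flag relation for $N/e \twoheadrightarrow M/e$ is precisely the sum of the first $w - \ell_N + 1$ summands of the Pl\"ucker flag relation for $N \twoheadrightarrow M$ at the extended tuple $\big((\vec y, \vec a_N),\, (\vec x, \vec a_M)\big)$. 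When $\ell_N = 0$ the two sums coincide and the result is immediate. When $\ell_N = 1$, the extended relation contains one further summand at $k = w+1$, namely $\epsilon^{w+1}\,\nu(\vec y)\,\mu(e, \vec x, \vec a_M)$; this vanishes either because $\mu$ has the repeated argument $e$ (if $\ell_M = 1$, by \ref{GP2}) or because $e$ is a loop of $\underline M$ (if $\ell_M = 0$, forcing $\mu(e, \cdot, \dotsc) = 0$). In all three cases, the minor relation lies in $N_F$.

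The main obstacle is bookkeeping rather than conceptual: one must extend the argument tuple by the correct copies of $e$ so that the minor flag relation appears as an initial segment of the original one. The excluded fourth case $(\ell_N, \ell_M) = (0,1)$ is precisely the configuration in which the ``extra'' summand at $k = w+1$ could fail to vanish, and it is ruled out exactly by the loop-preservation property of quotients derived via \autoref{prop: common flag matroids}.
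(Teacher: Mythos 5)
Your proposal is correct and follows essentially the same route as the paper's proof: contraction is proved directly by realizing the Pl\"ucker flag relation for $N/e\twoheadrightarrow M/e$ as an initial segment of the relation for $N\twoheadrightarrow M$ at a tuple extended by $e$, with the one possible extra summand vanishing by \ref{GP2} or because $e$ is a loop of $\underline M$, and deletion is then deduced from \autoref{prop-duality} and \autoref{lemma-minors}(3). The only differences are presentational: you package the paper's three explicit cases into the uniform $(\ell_N,\ell_M)$-bookkeeping and justify the loop-preservation (excluding $(\ell_N,\ell_M)=(0,1)$) via \autoref{prop: common flag matroids} rather than the cited lemma of Recski, which is equally valid.
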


\begin{proof}
Let $\mu$ and $\nu$ be Grassmann-Pl\"ucker functions that represent $M$ and $N$, respectively, and let $r$ and $w$ be their respective ranks. We begin with showing that $N/e \twoheadrightarrow M/e$. By \autoref{lemma-minors}, $\mu/e$ and $\nu/e$ represent $M/e$ and $N/e$, respectively.

\medskip\noindent
\textbf{Case 1:} Assume that $e$ is not a loop of $\underline{M}$. By \cite[Lemma 1]{Recski05}, $e$ is not a loop of $\underline{N}$. Thus $\rk(M/e) = r-1$ and $\rk(N/e) = w-1$. Let $\{y_1, \dotsc, y_w\}$ and $\{x_1, \dotsc, x_{r-2}\}$ be subsets of $E-e$. Note that $\mu(e, x_1, \dotsc, x_{r-2}, e) = 0$. Thus
\[
\begin{aligned}
      & \; \underset{k=1}{\overset{w}{\sum}} \epsilon^k \cdot (\nu/e)(y_1, \dotsc, \widehat{y_k}, \dotsc, y_w)\cdot (\mu/e)(y_k, x_1, \dotsc, x_{r-2})
\\
    = & \; \underset{k=1}{\overset{w}{\sum}} \epsilon^k \cdot \nu(y_1, \dotsc, \widehat{y_k}, \dotsc, y_w, e) \cdot \mu(y_k, x_1, \dotsc, x_{r-2}, e) + 0 
\\
    = & \; \underset{k=1}{\overset{w}{\sum}} \epsilon^k \cdot \nu(y_1, \dotsc, \widehat{y_k}, \dotsc, y_w, e) \cdot \mu(y_k, x_1, \dotsc, x_{r-2}, e)\\
      & \hspace{3cm} + \epsilon^{w+1} \cdot \nu(y_1, \dotsc, y_w) \cdot \mu(e, x_1, \dotsc, x_{r-2}, e) \ \in \ N_F.
\end{aligned}
\]

\medskip\noindent
\textbf{Case 2:} Assume that $e$ is a loop of both $\underline{M}$ and $\underline{N}$. We have $\rk(M/e) = r$ and $\rk(N/e) = w$. Let $\{y_1, \dotsc, y_{w+1}\}$ and $\{x_1, \dotsc, x_{r-1}\}$ be subsets of $E-e$. Thus
\[
\begin{aligned}
      & \; \underset{k=1}{\overset{w+1}{\sum}} \epsilon^k \cdot (\nu/e)(y_1, \dotsc, \widehat{y_k}, \dotsc, y_{w+1})\cdot (\mu/e)(y_k, x_1, \dotsc, x_{r-1})
\\
    = & \; \underset{k=1}{\overset{w+1}{\sum}} \epsilon^k \cdot \nu(y_1, \dotsc, \widehat{y_k}, \dotsc, y_{w+1}) \cdot \mu(y_k, x_1, \dotsc, x_{r-1}) \ \in \ N_F.
\end{aligned}
\]

\medskip\noindent
\textbf{Case 3:} Assume that $e$ is a loop of $\underline{M}$ but not a loop of $\underline{N}$. We have $\rk(M/e) = r$ and $\rk(N/e) = w-1$. Let $\{y_1, \dotsc, y_w\}$ and $\{x_1, \dotsc, x_{r-1}\}$ be subsets of $E-e$. Note that $\mu(e, x_1, \dotsc, x_{r-1}) = 0$. Thus
\[
\begin{aligned}
      & \ \underset{k=1}{\overset{w}{\sum}} \epsilon^k \cdot (\nu/e)(y_1, \dotsc, \widehat{y_k}, \dotsc, y_w)\cdot (\mu/e)(y_k, x_1, \dotsc, x_{r-1})
\\
    = & \ \underset{k=1}{\overset{w}{\sum}} \epsilon^k \cdot \nu(y_1, \dotsc, \widehat{y_k}, \dotsc, y_w, e) \cdot \mu(y_k, x_1, \dotsc, x_{r-1}) \quad + \quad 0
\\
    = & \ \underset{k=1}{\overset{w}{\sum}} \epsilon^k \cdot \nu(y_1, \dotsc, \widehat{y_k}, \dotsc, y_w, e) \cdot \mu(y_k, x_1, \dotsc, x_{r-1})
\\
      & \hspace{3cm} + \ \epsilon^{w+1} \cdot \nu(y_1, \dotsc, y_w) \cdot \mu(e, x_1, \dotsc, x_{r-1}) \ \in \ N_F.
\end{aligned}
\]
This shows that $N/e \twoheadrightarrow M/e$ in all cases.

Next we show that $N\backslash e \twoheadrightarrow M\backslash e$. We have $M^*\twoheadrightarrow N^*$, by \autoref{prop-duality}. Thus $M^*/e \twoheadrightarrow N^*/e$, by what was proved above. This implies that $(N^*/e)^* \twoheadrightarrow (M^*/e)^*$, by \autoref{prop-duality}. By \autoref{lemma-minors}, we have $N\backslash e = (N^*/e)^* \twoheadrightarrow (M^*/e)^* = M\backslash e$.
\end{proof}

\begin{thm}[Minors of flag matroids] \label{thm: minors}
Let $\bM = (M_1, \dotsc, M_s)$ be a flag $F$-matroid on $E$ and $I$, $J$ disjoint subsets of $E$. Then $\bM\backslash I/J := (M_1\backslash I/J, \dots,M_s\backslash I/J)$ is a flag $F$-matroid on $E\backslash (I\cup J)$.
\end{thm}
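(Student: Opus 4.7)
The plan is to reduce the theorem to the single-element minor case handled by \autoref{prop-minor} via induction on $|I| + |J|$. Assuming $I \cap J = \emptyset$ (as in the analogous \autoref{thmD}) so that the minor $M \backslash I / J$ is unambiguous, the definition of a flag $F$-matroid tells us that the conclusion amounts to proving $M_j \backslash I / J \twoheadrightarrow M_i \backslash I / J$ for every pair $1 \leq i < j \leq s$. Since these quotient relations decouple across pairs $(i,j)$, it suffices to show the following statement: for any two $F$-matroids $N \twoheadrightarrow M$ on $E$ and any disjoint $I, J \subseteq E$, we have $N \backslash I / J \twoheadrightarrow M \backslash I / J$.

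First I would induct on $|I| + |J|$, with the trivial base case $I = J = \emptyset$. For the inductive step, pick an element $e \in I \cup J$. If $e \in I$, apply \autoref{prop-minor} to obtain $N \backslash e \twoheadrightarrow M \backslash e$ as $F$-matroids on the smaller ground set $E \setminus \{e\}$; if $e \in J$, apply the contraction clause of \autoref{prop-minor} to obtain $N / e \twoheadrightarrow M / e$. In either case, the induction hypothesis applied to the new quotient pair together with the (still disjoint) subsets $I \setminus \{e\}$ and $J \setminus \{e\}$ of $E \setminus \{e\}$ delivers the desired quotient after all $|I \cup J| - 1$ remaining minor operations have been applied.

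The main bookkeeping step is to verify that these iterated single-element minors actually reproduce the bulk minors $N \backslash I / J$ and $M \backslash I / J$. This follows by direct inspection of the definitions in \autoref{lemma-minors}: the maximal independent subset of a set $A$ used to define $\varphi \backslash A$ (respectively $\varphi / A$) can be assembled one element at a time, so the iterated construction produces the same Grassmann--Pl\"ucker function modulo the $F^\times$-scalar indeterminacy already inherent in the definition. With this observation in place the induction closes cleanly, and no deeper obstacle appears; the whole content of the theorem is concentrated in the single-element statement of \autoref{prop-minor}.
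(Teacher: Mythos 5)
Your argument is correct and matches the paper's proof in essence: the paper likewise reduces to showing that a single quotient $N\twoheadrightarrow M$ is preserved under minors and then applies \autoref{prop-minor} repeatedly, first to the elements of $I$ and then to those of $J$. Your added induction and the remark that iterated single-element minors recover the bulk minors (and that $I$, $J$ should be taken disjoint, as in the paper's Theorem D) only make explicit what the paper's terse proof leaves implicit.
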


\begin{proof}
By \autoref{thm-cryptomorphism}, we only need to show that if  $N \twoheadrightarrow M$, then $N\backslash I/J \twoheadrightarrow M\backslash I/J$. A repeated application of \autoref{prop-minor} to the elements of $I$ shows that $N\backslash I \twoheadrightarrow M\backslash I$, and a similar argument for the elements of $J$ proves that $(N\backslash I)/J \twoheadrightarrow (M\backslash I)/J$.
\end{proof}

\begin{rem}
Even if $\rk(\bM)$ is strictly increasing, $\rk(\bM\backslash I/J)$ might not be strictly increasing. For example, let $M$ and $N$ be the $\K$-matroids on $E = \{1, 2\}$ whose circuit sets are $\mathcal{C}(M) = \{(0,1)\}$ and $\mathcal{C}(N) = \emptyset$, respectively (i.e., $\underline{M} = U_{1,1}\oplus U_{0,1}$ and $\underline{N} = U_{2,2}$). Then $\textbf{M} = (M, N)$ is a flag matroid of rank $(1, 2)$, but $\textbf{M}\backslash\{2\}$ has rank $(1,1)$.
\end{rem}


\subsection{Flag matroids over perfect tracts}
\label{subsection: perfect tracts}

Flag matroids behave particularly well for perfect tracts in a way that carries over the intuition of flags of linear subspaces over a field. 

\begin{thm}\label{thm: cryptomorphisms for perfect tracts}
 Let $F$ be a tract and $M_1, \dotsc, M_s$ $F$-matroids. Consider the following properties:
 \begin{enumerate}
    \item\label{perfect1} The covector sets form a chain $\mathcal{V}^*(M_1) \subseteq \dotsc \subseteq \mathcal{V}^*(M_s)$;
    \item\label{perfect2} $(M_1, \dotsc, M_s)$ is a flag $F$-matroid (i.e., $M_j\twoheadrightarrow M_i$ for all $1\leq i<j\leq s$);
    \item\label{perfect3} $M_i$ is a quotient of $M_{i+1}$ for all $1 \leq i \leq s-1$.
 \end{enumerate}
 Then the implications \eqref{perfect1}$\Rightarrow$\eqref{perfect2}$\Rightarrow$\eqref{perfect3} hold in general, and \eqref{perfect3}$\Rightarrow$\eqref{perfect1} holds if $F$ is perfect. 
\end{thm}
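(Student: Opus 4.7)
The plan is to handle the three implications in order, using the cryptomorphism from \autoref{thm-cryptomorphism}, the symmetric forms listed in \autoref{rem-cryptomorphism}, and the definition of perfect tract only for the final implication.

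For \eqref{perfect1}$\Rightarrow$\eqref{perfect2}, fix $1\leq i<j\leq s$ and recall that cocircuits are orthogonal to circuits, so $\mathcal{C}^*(M_i)\subseteq \mathcal{C}(M_i)^\perp=\mathcal{V}^*(M_i)$ (no perfectness needed). Chaining with the hypothesis gives
\[
 \mathcal{C}^*(M_i) \ \subseteq \ \mathcal{V}^*(M_i) \ \subseteq \ \mathcal{V}^*(M_j),
\]
so \autoref{thm-cryptomorphism} yields $M_j\twoheadrightarrow M_i$, i.e.\ $(M_1,\dotsc,M_s)$ is a flag $F$-matroid. The implication \eqref{perfect2}$\Rightarrow$\eqref{perfect3} is tautological, since \eqref{perfect3} is the restriction of \eqref{perfect2} to the consecutive pairs $(i,i+1)$.

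For \eqref{perfect3}$\Rightarrow$\eqref{perfect1} assume now that $F$ is perfect. It suffices to show $\mathcal{V}^*(M_i)\subseteq \mathcal{V}^*(M_{i+1})$ for each $1\leq i\leq s-1$, since chaining these inclusions gives the desired chain. The quotient $M_{i+1}\twoheadrightarrow M_i$ translates, by the equivalent formulations collected in \autoref{rem-cryptomorphism}, to $\mathcal{C}(M_{i+1})\subseteq \mathcal{V}(M_i)$. Now perfectness of $F$ gives $\mathcal{V}^*(M_i)\perp \mathcal{V}(M_i)$, and hence in particular $\mathcal{V}^*(M_i)\perp \mathcal{C}(M_{i+1})$. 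By definition of the covectors of $M_{i+1}$, this means
\[
 \mathcal{V}^*(M_i) \ \subseteq \ \mathcal{C}(M_{i+1})^\perp \ = \ \mathcal{V}^*(M_{i+1}),
\]
completing the proof.

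The only genuinely non-formal step is the last one, and the main obstacle is remembering to reformulate the hypothesis $M_{i+1}\twoheadrightarrow M_i$ as $\mathcal{C}(M_{i+1})\subseteq \mathcal{V}(M_i)$ (using duality) rather than as $\mathcal{C}^*(M_i)\subseteq \mathcal{V}^*(M_{i+1})$; only the former version combines cleanly with perfectness $\mathcal{V}^*(M_i)\perp\mathcal{V}(M_i)$ to yield orthogonality against $\mathcal{C}(M_{i+1})$. Note also that perfectness is essential precisely here: without it, one cannot bound arbitrary covectors of $M_i$ against vectors of $M_i$, so the passage from cocircuit containment to covector containment fails in general.
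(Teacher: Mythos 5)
Your proposal is correct and follows essentially the same route as the paper: the same use of $\mathcal{C}^*(M_i)\subseteq\mathcal{V}^*(M_i)\subseteq\mathcal{V}^*(M_j)$ with \autoref{thm-cryptomorphism} for \eqref{perfect1}$\Rightarrow$\eqref{perfect2}, and for \eqref{perfect3}$\Rightarrow$\eqref{perfect1} the same argument, since the paper's dualized statement $\mathcal{C}^*(M_{i+1}^*)\subseteq\mathcal{V}^*(M_i^*)$ is exactly your $\mathcal{C}(M_{i+1})\subseteq\mathcal{V}(M_i)$, combined with perfectness and taking orthogonal complements.
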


\begin{proof}
 It is evident that \eqref{perfect2} implies \eqref{perfect3}. Given \eqref{perfect1}, one has $\mathcal{C}^*(M_i) \subseteq \mathcal{V}^*(M_i) \subseteq \mathcal{V}^*(M_{j})$ for $i<j$. Thus $M_j \twoheadrightarrow M_i$ by \autoref{thm-cryptomorphism}, which implies \eqref{perfect2}.
 
 Assume \eqref{perfect3}, i.e., $M_{i+1}\twoheadrightarrow M_i$. By \autoref{prop-duality}, we have $M_i^* \twoheadrightarrow M_{i+1}^*$. Thus $\mathcal{C}^*(M_{i+1}^*) \subseteq \mathcal{V}^*(M_i^*)$ by \autoref{thm-cryptomorphism}. If $F$ is perfect, then
 \[
  \mathcal{V}^*(M_i) \ = \ \mathcal{V}^*(M_i^*)^{\perp} \ \subseteq \ \mathcal{C}^*(M_{i+1}^*)^{\perp} \ = \ \mathcal{V}(M_{i+1}^*) \ = \ \mathcal{V}^*(M_{i+1}),
 \]
 which shows \eqref{perfect1}.
\end{proof}
 
\begin{rem}
For a perfect tract $F$, note that the equivalence of conditions \eqref{perfect2} and \eqref{perfect3} in \autoref{thm: cryptomorphisms for perfect tracts} implies that quotients of $F$-matroids are composable, i.e., if $M_3\twoheadrightarrow M_2$ and $M_2\twoheadrightarrow M_1$, then also $M_3\twoheadrightarrow M_1$.This fails to be true in general for non-perfect tracts, as the following example shows. 
\end{rem}

\begin{ex} 
We exhibit $\P$-matroids $M_1$, $M_2$ and $M_3$, where $\P$ is the phase hyperfield, that satisfy the following properties:
 \begin{enumerate}[label=(\alph*)]
  \item \label{counterexampleA} $M_3\twoheadrightarrow M_2$ and $M_2\twoheadrightarrow M_1$, but $M_1$ is \textbf{not} a quotient of $M_3$, which shows that quotients are not composable in general;
  \item \label{counterexampleB} $(M_1, M_2, M_3)$ is not a flag matroid, which shows that \eqref{perfect3} does not imply \eqref{perfect2} in general;
  \item \label{counterexampleC} $\mathcal{V}^*(M_2)$ is not a subset of $\mathcal{V}^*(M_3)$, which shows that \eqref{perfect2} does not imply \eqref{perfect1} in general.
 \end{enumerate}

 The \emph{phase hyperfield} is the hyperfield quotient of $\C$ by $\R_{>0}$, whose multiplicative monoid is $\P=S^1\cup\{0\}$, where $S^1 = \{ z\in \C \mid |z| = 1\}$ is the complex unit circle, and whose hyperaddition is given by
 \[
  x\hyperplus y \ = \ \bigg\{ \frac{\alpha x + \beta y}{ \| \alpha x + \beta y \|} \, \bigg| \, \alpha, \beta \in {\mathbb R}_{>0} \bigg\}
 \]
 for $x,y\in\P^\times$ with $y\neq -x$, which is the smallest open arc in $S^1$ connecting $x$ and $y$ if $x\neq y$ and which is $\{x\}$ if $y=x$. If $y=-x$, then $x\hyperplus y=\{0,x,y\}$. See \cite[Example 2.15]{Baker-Bowler19} for details. 
 
 Considered as a tract, the nullset of $\P$ is
 \[\textstyle
  N_\P \ = \ \big\{ \sum x_i \in\N[\P^\times] \, \big| \, \sum\alpha_ix_i=0\text{ in $\C$ for some }\alpha_i\in\R_{>0} \big\}.
 \]

Endow $\P$ with the trivial involution (\textit{cf.} \cite[p. 837]{Baker-Bowler19}) and let $E=\{1,\dotsc,4\}$. We define the aforementioned $\P$-matroids $M_i$ in terms of dual pairs of $\P$-signatures $\cC_i$ for $U_{i,4}$ and $\cD_i$ for $U_{4-i,4}$ for $i=1,2,3$. Namely, the $\P$-signature $\cC_1$ of $U_{1,4}$ consists of the multiples (by elements of $\P^\times=S^1$) of the elements
 \[
  (1, -1, 0, 0),\ (1, 0, -1, 0),\ (1, 0, 0, -1), \ (0, 1, -1, 0),\ (0, 1, 0, -1),\ (0, 0, 1, -1)
 \]
 of $\P^4=\P^E$ and its dual $\P$-signature $\cD_1$ of $U_{3,4}$ consists of the multiples of $(1,1,1,1)$. The $\P$-signature $\cC_2$ of $U_{2,4}$ consists of the multiples of
 \[
  \big(1, e^{6\pi i/4}, e^{3\pi i/4}, 0\big),\ \big(1, e^{2\pi i/4}, 0, e^{5\pi i/4}\big),\ \big(1, 0, e^{3\pi i/4}, e^{5\pi i/4}\big),\ \big(0, 1, e^{5\pi i/4}, e^{3\pi i/4}\big)
 \]
 and its dual $\P$-signature $\cD_2$ consists of the multiples of
 \[
  \big(1, 0, e^{\pi i/4}, e^{7\pi i/4}\big),\ \big(0, 1, e^{7\pi i/4}, e^{\pi i/4}\big),\ \big(e^{7\pi i/4}, e^{\pi i/4}, 1, 0\big),\ \big(e^{\pi i/4}, e^{7\pi i/4}, 0, 1\big).
 \]
 The $\P$-signature $\cC_3$ of $U_{3,4}$ consists of the multiples of $\big(1,1,e^{7\pi i/8}, e^{7\pi i/8}\big)$ and its dual $\P$-signature $\cD_3$ consists of the multiples of
 \begin{align*}
  (1, -1, 0, 0), && \big(e^{15\pi i/8}, 0, 1, 0\big), && \big(0, e^{15\pi i/8}, 1, 0\big), \\
  (0, 0, 1, -1), && \big(e^{15\pi i/8}, 0, 0, 1\big), && \big(0, e^{15\pi i/8}, 0, 1\big).
 \end{align*}
 This defines for $i=1,2,3$ the $\P$-matroids $M_i$ with circuit set $\cC(M_i)=\cC_i$ and cocircuit set $\cC^\ast(M_i)=\cD_i$.
 
 The cocircuit $w=(1,1,1,1)$ of $M_1$ is a covector of $M_2$ since $w\cdot v\in N_\P$ for every $v\in\cC(M_2)$, as can be verified by a direct computation. Since orthogonality is invariant under scaling vectors and $\cC^\ast(M_1)$ consists of the multiples of $w$, we conclude that $\cC^\ast(M_1)\subset\cV^\ast(M_2)$ and therefore $M_2\twoheadrightarrow M_1$. Similarly, we can verify that $\cD_2\perp \cC_3$ and therefore $M_3\twoheadrightarrow M_2$. We have, however, that
 \[
  w \cdot \big(1,1,e^{7\pi i/8}, e^{7\pi i/8}\big) \ = \ 1+1+e^{7\pi i/8}+e^{7\pi i/8},
 \]
 which is not in $N_\P$ since the summands span a strict cone in $\C=\R^2$. Thus $w$ is not a covector of $M_3$. This shows: \ref{counterexampleA} $M_1$ is not a quotient of $M_3$ even though $M_3\twoheadrightarrow M_2$ and $M_2\twoheadrightarrow M_1$, \ref{counterexampleB} $(M_1,M_2,M_3)$ is not a flag $\P$-matroid even though condition \eqref{perfect3} of \autoref{thm: cryptomorphisms for perfect tracts} holds, and \ref{counterexampleC} $\cV^\ast(M_2)$ is not a subset of $\cV^\ast(M_3)$ even though $M_3\twoheadrightarrow M_2$.
\end{ex}


\subsection{Flags of linear subspaces and valuated flag matroids}
\label{subsection: examples}

At this point, we are prepared for a comprehensive discussion of flag matroids over fields and over the tropical hyperfield.

Recall that the tract associated with a field $K$ replaces the addition of $K$ by the nullset $N_K=\{\sum a_i\in\N[K^\times]\mid \sum a_i=0\text{ in $K$}\}$.

\begin{prop}[Flag matroids over fields]\label{prop: flag matroids over fields}
 Let $K$ be a field and $\bM=(M_1,\dotsc,M_s)$ a flag $K$-matroid. Then $\cV^\ast(M_1)\subset\dotsb\subset \cV^\ast(M_s)$. This establishes a bijection between the set of flag $K$-matroids of rank $\br$ on $E$ and the set of flags $V_1\subset\dotsc\subset V_s$ of linear subspaces of $K^E$ with $\dim V_i=r_i$.
\end{prop}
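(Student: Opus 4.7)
The plan is to assemble this proposition directly from two results already in hand: the Baker-Bowler identification of $K$-matroids with $K$-linear subspaces, and the perfect-tract cryptomorphism.

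First I would observe that every field $K$ is a perfect tract, so \autoref{thm: cryptomorphisms for perfect tracts} applies and gives the equivalence between $\bM=(M_1,\dotsc,M_s)$ being a flag $K$-matroid and the chain condition $\cV^\ast(M_1)\subset\dotsb\subset\cV^\ast(M_s)$. This immediately proves the first assertion of the proposition.

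Next I would recall the bijection $M\mapsto\cV^\ast(M)$ between $K$-matroids of rank $r$ on $E$ and $r$-dimensional subspaces of $K^E$ (as stated in the examples at the end of \autoref{section: Baker-Bowler theory}, following \cite[Prop.\ 2.19]{Anderson19}). Applying this bijection componentwise to a flag $K$-matroid of rank $\br$ produces a sequence $(V_1,\dotsc,V_s)$ with $\dim V_i=r_i$, and the chain condition above says exactly that this sequence is a flag of subspaces of $K^E$.

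For the inverse, given a flag $V_1\subset\dotsb\subset V_s$ with $\dim V_i=r_i$, I would apply the inverse of the Baker-Bowler bijection componentwise to obtain $K$-matroids $M_i$ with $\cV^\ast(M_i)=V_i$; the chain condition together with \autoref{thm: cryptomorphisms for perfect tracts} then guarantees that $(M_1,\dotsc,M_s)$ is a flag $K$-matroid. The two constructions are inverse to one another by construction, so the bijection is established. There is no real obstacle here—the statement is a direct corollary of the two cited results—so the work consists only of verifying that the componentwise application of the Baker-Bowler bijection is compatible with the flag condition, which is exactly the content of \autoref{thm: cryptomorphisms for perfect tracts}.
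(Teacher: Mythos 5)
Your proposal is correct and is essentially the paper's own argument: the paper likewise cites \cite[Prop.\ 2.19]{Anderson19} for the perfectness of $K$ and the bijection $M\mapsto\cV^\ast(M)$ between rank-$r$ $K$-matroids and $r$-dimensional subspaces of $K^E$, and then invokes \autoref{thm: cryptomorphisms for perfect tracts} to identify the flag condition with the chain condition $\cV^\ast(M_1)\subset\dotsb\subset\cV^\ast(M_s)$. Nothing further is needed.
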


\begin{proof}
 By \cite[Prop. 2.19]{Anderson19}, the tract $K$ is perfect and the covector set $\cV^\ast(M)$ of a $K$-matroid $M$ forms a linear subspace $V$ of $K^E$, which establishes a bijection between the set of $K$-matroids of rank $r$ on $E$ and the set of linear subspaces of $K^E$ of dimension $r$. By \autoref{thm: cryptomorphisms for perfect tracts}, a sequence $(M_1,\dotsc,M_s)$ of $K$-matroids forms a flag $K$-matroid if and only if $\cV^\ast(M_1)\subset\dotsb\subset \cV^\ast(M_s)$.
\end{proof}

We turn to the comparison of valuated flag matroids in the sense of \cite[Def.\ 4.2.2]{Brandt-Eur-Zhang21} with flag $\T$-matroids in our sense. We rephrase the definitions of \cite{Brandt-Eur-Zhang21} in terms of the Berkovich model $\R_{\geq0}$ of the tropical semifield, using the semiring isomorphism $-\exp: \R\cup\{\infty\} \to\R_{\geq0}$ between the min-plus algebra and the Berkovich model, which transforms the tropical addition ``min'' into ``max'' and the tropical multiplication ``plus'' into usual multiplication.

A \emph{Dress-Wenzel valuation} is a map $\mu:E^r\to\R_{\geq0}$ such that for every choice of elements $x_1,\dotsc,x_{r-1},y_1,\dotsc,y_{r+1}\in E$ there is an $i\in\{1,\dotsc,r\}$ such that
\[
 \mu(y_1,\dotsc,y_r)\cdot\mu(y_{r+1},x_1,\dotsc,x_r) \ \leq \ \mu(y_1,\dotsc,\widehat{y_i},\dotsc,y_{r+1})\cdot\mu(y_{i},x_1,\dotsc,x_r).
\]
Two valuations $\mu_1, \mu_2:E^r\to\R_{\geq0}$ are \textit{equivalent} if there exists $\alpha \in \R_{>0}$ such that $\mu_1 = \alpha \mu_2$. A \emph{valuated matroid}\footnote{There is a discrepancy of terminology in the literature. What is called a \emph{valuated matroid} in \cite{Brandt-Eur-Zhang21} is called a \emph{valuation} in Dress-Wenzel's paper that introduces valuated matroids (cf.\ \cite[Def.\ 1.1]{Dress-Wenzel92b}), and it corresponds to a Grassmann-Pl\"ucker function with tropical coefficients in Baker-Bowler theory. Valuated matroids in Dress-Wenzel's sense appear in \cite{Brandt-Eur-Zhang21} as projective classes of valuated matroids in the latter sense, but without a distinct name. We follow the terminological conventions of Dress-Wenzel in our exposition.} is the equivalence class $[\mu]$ of a Dress-Wenzel valuation $\mu:E^r\to\R_{\geq0}$.

Recall that the multiplicative monoid of $\T$ is $\R_{\geq0}$ and the nullset 
 \[
  N_\T \ = \ \big\{a_1+\dotsc+a_n\in\N[\R_{>0}] \, \big| \, \text{the maximum occurs twice in }a_1,\dotsc,a_n\big\}.
 \]
 We denote by $\iota:\R_{\geq0}\to\T$ the identity map. Note that a function $\mu:E^r\to\R_{\geq0}$ is a Dress-Wenzel valuation if and only if $\iota \circ \mu:E^r\to\T$ is a Grassmann-Pl\"ucker function. This defines a bijection that sends a valuated matroid $M=[\mu]$ to the $\T$-matroid $\widetilde M=[\iota\circ\mu]$ (cf. \cite[Ex. 3.32]{Baker-Bowler19}).

Let $M=[\mu]$ and $N=[\nu]$ be valuated matroids on $E$ of respective ranks $r$ and $w$. Following \cite{Brandt-Eur-Zhang21}, we say that \emph{$M$ is a quotient of $N$}, and write $M\twoheadleftarrow N$, if 
for all $x_1,\dotsc,x_{r-1},y_1,\dotsc,y_{w+1}\in E$ there is some $i\in\{1,\dotsc,w+1\}$ such that
\begin{equation}
\label{valuated equation}
 \nu(y_1,\dotsc,y_w)\cdot\mu(y_{w+1},x_1,\dotsc,x_r) \ \leq \ \nu(y_1,\dotsc,\widehat{y_i},\dotsc,y_{w+1})\cdot\mu(y_{i},x_1,\dotsc,x_r).
\end{equation} 
A \emph{valuated flag matroid} is a sequence $\bM=(M_1,\dotsc,M_s)$ of valuated matroids such that $M_i\twoheadleftarrow M_j$ for all $1\leq i\leq j\leq s$.

\begin{prop}[Valuated flag matroids]\label{prop: valuated matroids}
 A sequence $\bM=(M_1,\dotsc,M_s)$ of valuated matroids is a valuated flag matroid if and only if the sequence $\widetilde\bM=(\widetilde M_1,\dotsc,\widetilde M_s)$ of associated $\T$-matroids is a flag $\T$-matroid.
\end{prop}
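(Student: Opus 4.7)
The plan is to translate both quotient conditions into the same combinatorial statement about maxima of products of values of the representing Grassmann-Pl\"ucker functions, and then observe that they coincide. Fix Grassmann-Pl\"ucker functions $\mu:E^r\to \R_{\geq 0}$ and $\nu:E^w\to \R_{\geq 0}$ representing valuated matroids $M_i$ and $M_j$ with $r\leq w$, and write $\widetilde\mu=\iota\circ\mu$ and $\widetilde\nu=\iota\circ\nu$ for the associated Grassmann-Pl\"ucker functions with values in $\T$. Since the bijection $M\mapsto\widetilde M$ recalled in the excerpt already matches valuated matroids with $\T$-matroids, it suffices to show that the valuated quotient relation $M_i\twoheadleftarrow M_j$ is equivalent to the flag $\T$-matroid quotient relation $\widetilde M_j\twoheadrightarrow \widetilde M_i$ for every pair $i\leq j$.

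The first step is to observe that, since $\epsilon=1$ in $\T$ and by the very definition of $N_\T$, the Pl\"ucker flag relation
\[
 \sum_{k=1}^{w+1}\ \epsilon^k\cdot \widetilde\nu(y_1,\dotsc,\widehat{y_k},\dotsc,y_{w+1})\cdot\widetilde\mu(y_k,x_1,\dotsc,x_{r-1})\ \in\ N_\T
\]
is equivalent to the assertion that among the nonnegative real numbers
\[
 a_k \ := \ \nu(y_1,\dotsc,\widehat{y_k},\dotsc,y_{w+1})\cdot\mu(y_k,x_1,\dotsc,x_{r-1}), \qquad k=1,\dotsc,w+1,
\]
the maximum is attained at least twice (the degenerate case $a_1=\dotsb=a_{w+1}=0$ being subsumed by $0\in N_\T$). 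This is a direct unwinding of the definition of $N_\T$ and requires no extra input.

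The second step is to verify that the same \emph{max attained twice} property is the precise content of the valuated quotient definition \eqref{valuated equation}. One direction is immediate: if the max is attained at least twice, then in particular for $k=w+1$ there is an $i\neq w+1$ with $a_{w+1}\leq a_i$, which is \eqref{valuated equation}. For the converse I would use permutation symmetry in $(y_1,\dotsc,y_{w+1})$: applying \eqref{valuated equation} after swapping $y_k$ and $y_{w+1}$ yields, for every $k\in\{1,\dotsc,w+1\}$, some $i\neq k$ with $a_k\leq a_i$, which together forbid any single $a_k$ from being strictly maximal. Chaining the two equivalences identifies the valuated and $\T$-matroid quotient relations, and the proposition follows directly from the respective definitions of valuated flag matroid and flag $\T$-matroid. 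The only mild technicality is this symmetry argument; it is entirely routine here because the tropical sign convention is trivial ($\epsilon=1$), so \eqref{valuated equation} is manifestly permutation-invariant in the $y_k$'s.
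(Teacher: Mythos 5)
Your argument is correct and follows essentially the same route as the paper: reduce to a single quotient pair, observe that the valuated quotient condition (quantified over all orderings of the $y$'s, which is legitimate by the symmetry of Dress--Wenzel valuations) is equivalent to the maximum of the products being attained at least twice, and identify this with membership of the Pl\"ucker flag sum in $N_\T$ since $\epsilon=1$. Your explicit swap of $y_k$ and $y_{w+1}$ just spells out the reordering step that the paper invokes implicitly, so there is nothing to add.
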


\begin{proof}
We only need to show for a pair of valuated matroids $M$ and $N$ on $E$ that $M\twoheadleftarrow N$ if and only if $\widetilde N\twoheadrightarrow\widetilde M$. Let $\mu:E^r\to\R_{\geq0}$ and $\nu:E^w\to\R_{\geq0}$ be Dress-Wenzel valuations representing $M$ and $N$, respectively. Let $x_1,\dotsc,x_{r-1},y_1,\dotsc,y_{w+1}\in E$. Then for every reordering of $x_1,\dotsc,x_{r-1}$ and of $y_1,\dotsc,y_{w+1}\in E$ there is an $i\in\{1,\dotsc,w+1\}$ such that equation \eqref{valuated equation} holds if and only if the maximum of
 \[
  \big\{ \nu(y_1,\dotsc,\widehat{y_i},\dotsc,y_{w+1})\cdot\mu(y_{i},x_1,\dotsc,x_r)  \, \big| \, i=1,\dotsc,w+1 \big\}
 \]
 is attained at least twice. By the definition of $\T$, this happens if and only if
 \[
  \sum_{i=1}^{w+1} \ \iota\circ\nu(y_1,\dotsc,\widehat{y_i},\dotsc,y_{w+1})\cdot\iota\circ\mu(y_{i},x_1,\dotsc,x_r) \ \ \in \ \ N_\T.
 \]
 Since $\epsilon=1$ in $\T$, this is precisely the condition for $\widetilde M\twoheadrightarrow\widetilde N$ if we vary through all $x_1,\dotsc,x_{r-1},y_1,\dotsc,y_{w+1}\in E$, and thus the result follows.
\end{proof}

\begin{rem}\label{rem: valuated flag matroids}
 Since $\T$ is perfect (see \cite[Cor. 3.45]{Baker-Bowler19}) and the covector set $\cV^\ast(M)$ of a valuated matroid $M$ is a tropical linear space, \autoref{thm: cryptomorphisms for perfect tracts} identifies flag $\T$-matroids $(M_1,\dotsc,M_s)$ with flags $\cV^\ast(M_1)\subset\dotsb\subset\cV^\ast(M_s)$ of tropical linear subspaces in $\T^E$. This recovers \cite[Thm.\ 4.3.1]{Brandt-Eur-Zhang21}. The Pl\"ucker flag relations also show at once that a flag $\T$-matroid is the same thing as a point of the flag Dressian $FlDr(r_1,\dotsc,r_s;n)$, which recovers \cite[Prop.\ 4.2.3]{Brandt-Eur-Zhang21}; also see \autoref{subsubsection: tropical points of the flag variety}.
\end{rem}


\subsection{Flags of minors}
\label{subsection: flags of minors}

As explained in the first example of the introduction, certain sequences of minors of a matroid $M'$ on $E'$ are flag matroids, see \cite{Kung77}. This generalizes verbatim to flag $F$-matroids over an arbitrary tract $F$.

\begin{thm}\label{thm: flags of minors}
 Let $F$ be a tract, $p\in\N$, $M'$ an $F$-matroid on $E'=E\sqcup\{n+1, \dotsc, n+p\}$ and fix integers $0\leq n_s\leq \dotsc\leq n_1 \leq p$. For $i=1,\dotsc,s$, we define $M_i=M'\minor{I_i}{J_i}$ where $J_i=\{n+1, \dotsc, n+n_i\}$ and $I_i=\{n+(n_i+1),\dotsc,n+p\}$. Then $(M_1,\dotsc,M_s)$ is a flag $F$-matroid on $E$.
\end{thm}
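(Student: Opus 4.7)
The plan is to reduce the theorem to a single core lemma---namely, that for any $F$-matroid $Q$ on a ground set $S\sqcup A$, the relation $Q\backslash A\twoheadrightarrow Q/A$ holds---and then to verify that lemma by unfolding a single Pl\"ucker relation of a Grassmann-Pl\"ucker function of $Q$.

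To carry out the reduction, fix indices $i<j$. Since $n_s\leq\dotsb\leq n_1$, one has $J_j\subseteq J_i$ and $I_i\subseteq I_j$, so the set $A:=J_i\setminus J_j=I_j\setminus I_i=\{n+n_j+1,\dotsc,n+n_i\}$ is well-defined. Using that deletions and contractions by disjoint subsets commute, the $F$-matroid $Q:=M'\backslash I_i/J_j$ on the ground set $E\sqcup A$ satisfies $M_j=Q\backslash A$ and $M_i=Q/A$. Hence verifying $M_j\twoheadrightarrow M_i$ for all $i<j$ follows from the lemma applied to $(Q,A)$.

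To prove the lemma I would pick a Grassmann-Pl\"ucker function $\varphi:(S\sqcup A)^k\to F$ representing $Q$, where $k=\rk(Q)$, and choose auxiliary elements in $A$ via \autoref{lemma-minors}: $a_1,\dotsc,a_\ell\in A$ forming a basis of $Q|A$ (where $\ell=\rk_Q(A)$), and $b_1,\dotsc,b_{k-w}\in A$ forming a basis of $Q/S$ (where $w=\rk_Q(S)$). Then $\mu(x_1,\dotsc,x_r):=\varphi(x_1,\dotsc,x_r,a_1,\dotsc,a_\ell)$ with $r=k-\ell$ represents $Q/A$, and $\nu(y_1,\dotsc,y_w):=\varphi(y_1,\dotsc,y_w,b_1,\dotsc,b_{k-w})$ represents $Q\backslash A$. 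The Pl\"ucker flag relation for $\nu$ and $\mu$ at inputs $y_1,\dotsc,y_{w+1},x_1,\dotsc,x_{r-1}\in S$ then matches the first $w+1$ summands of the Pl\"ucker relation \ref{GP3} of $\varphi$ applied to the extended tuples $(y_1,\dotsc,y_{w+1},b_1,\dotsc,b_{k-w})$ and $(x_1,\dotsc,x_{r-1},a_1,\dotsc,a_\ell)$, of lengths $k+1$ and $k-1$ respectively.

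The main obstacle is showing that the remaining $k-w$ summands vanish individually. Each such summand carries a factor of the form $\varphi(b_m,x_1,\dotsc,x_{r-1},a_1,\dotsc,a_\ell)$, whose arguments $b_m,a_1,\dotsc,a_\ell$ are $\ell+1$ elements of $A$, a set of rank $\ell$ in $Q$; hence they either repeat (so $\varphi$ vanishes by \ref{GP2}) or are distinct and dependent (so $\varphi$ vanishes as non-bases do). This is precisely where the choice of the $a_i$'s as a \emph{maximal} independent subset of $A$ is essential. With this observation the flag relation coincides with the full Pl\"ucker relation of $\varphi$, which lies in $N_F$, finishing the proof.
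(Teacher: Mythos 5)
Your proof is correct, and it reorganizes the argument in a way that differs from the paper's. The computational heart is the same in both: the Pl\"ucker flag relation \eqref{flag equation} for the pair of minors is exhibited as the first block of summands of a single Pl\"ucker relation \ref{GP3} of one ambient Grassmann-Pl\"ucker function, and the leftover summands are shown to vanish. The difference lies in the setup. The paper proves the pairwise statement $M'\backslash I_b/J_b \twoheadrightarrow M'\backslash I_a/J_a$ directly on $M'$, by a coordinated chain of basis extensions (the sets $Y, X, V, Z, Y^+$) that makes the fixed tail of the higher-rank minor's representative a subset of the tail of the lower-rank one, so the extra terms die by repetition of arguments via \ref{GP2}. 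You instead insert the intermediate minor $Q=M'\backslash I_i/J_j$ so that the two matroids become exactly $Q\backslash A$ and $Q/A$ for a single set $A$, reducing everything to the clean lemma that $Q\backslash A\twoheadrightarrow Q/A$; there the two tails (a basis of $Q\vert_A$ and a basis of $Q/S$ inside $A$) can be chosen independently, and the extra terms vanish for the more conceptual reason that $\ell+1$ elements of the rank-$\ell$ set $A$ are dependent or repeat. What your route buys is the isolation of the tract-analogue of the classical fact that deletion is a quotient cover of contraction, and it avoids the delicate nested-basis bookkeeping; what it costs is an extra ingredient the paper never invokes, namely that contractions and deletions by disjoint subsets compose and commute for $F$-matroids ($M'\backslash I_i/(J_j\sqcup A)=(M'\backslash I_i/J_j)/A$ and $(M'\backslash I_i\backslash A)/J_j=(M'\backslash I_i/J_j)\backslash A$). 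This is standard and routine to verify from the definitions (choose the tail for the iterated minor to be the union of the tails, and compare via the well-definedness statement in \autoref{lemma-minors}), but it is not contained in the quoted lemma, so you should either cite it from Baker--Bowler or include the short verification.
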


\begin{proof}
It is enough to show that if $a$ and $b$ are integers such that $0\leq b < a \leq p$, then $M'\backslash I_b/J_b \twoheadrightarrow M'\backslash I_a/J_a$, where $J_c:=\{n+1, \dotsc , n+c\}$ and $I_c:=\{n+(c+1), \dotsc , n+p\}$ for $c \in \{a, b\}$.

Let $\varphi$ be a Grassmann-Pl\"ucker function such that $M' = M_\varphi$. Let $r:= \rk(M')$ and $r_c:=\rk(M'\backslash I_c/J_c)$ for $c\in\{a, b\}$. We aim to find suitable sets $\{q_1, \dotsc, q_{r-r_b}\}$ and $\{u_1, \dotsc, u_{r-r_a}\}$ such that 
\[
\varphi \backslash I_b/J_b(h_1, \dotsc, h_{r_b}) = \varphi(h_1, \dotsc, h_{r_b}, q_1, \dotsc, q_{r-r_b})
\]
and 
\[
\varphi \backslash I_a/J_a(g_1,\dotsc, g_{r_a}) = \varphi(g_1,\dotsc, g_{r_a},u_1, \dotsc, u_{r-r_a}).
\]

Let $Y\in\mathcal{B}(M'|J_b)$. Then there are $X\subseteq E$, $V\subseteq  J_a\backslash J_b$ and $Z\subseteq I_a$ such that $Y\sqcup X\in\mathcal{B}(M'|E\sqcup J_b)$, $Y\sqcup X\sqcup V \in\mathcal{B}(M'|E\sqcup J_a)$ and $Y\sqcup X \sqcup V \sqcup Z \in\mathcal{B}(M')$. This means that $Z\in \mathcal{B}(M'/E\sqcup J_a)$, $Z\sqcup V\in \mathcal{B}(M'/E\sqcup J_b)$ and $Y\sqcup V \in \mathcal{I}(M'|J_a)$. Thus there is $Y^+\subseteq J_a\backslash (Y\sqcup V)$ such that $Y\sqcup Y^+\sqcup V \in \mathcal{B}(M'|J_a)$. Let $Y = \{y_1, \dotsc, y_\ell\}$, $Y^+ = \{y_{\ell+1}, \dotsc, y_t\}$, $V = \{v_1, \dotsc, v_m\}$ and $Z = \{z_1, \dotsc, z_w\}$. Note that
\[
\begin{aligned}
    & \varphi \backslash I_b/J_b(h_1, \dotsc, h_{r_b})
    \\
=\; & \varphi \backslash I_b(h_1, \dotsc, h_{r_b}, y_1, \dotsc, y_\ell)
\\
=\; & \varphi(h_1, \dotsc, h_{r_b}, y_1, \dotsc, y_\ell, z_1, \dotsc, z_w, v_1, \dotsc, v_m)
\end{aligned}
\]
and
\[
\begin{aligned}
    & \varphi \backslash I_a/J_a(g_1,\dotsc, g_{r_a})
    \\
=\; & \varphi \backslash I_a(g_1,\dotsc, g_{r_a}, y_1, \dotsc, y_\ell, y_{\ell+1}, \dotsc, y_t, v_1, \dotsc, v_m)
\\
=\; & \varphi(g_1,\dotsc, g_{r_a}, y_1, \dotsc, y_\ell, y_{\ell+1}, \dotsc, y_t, v_1, \dotsc, v_m, z_1, \dotsc, z_w).
\end{aligned}
\]

To finish the proof, we only need to show that $\varphi \backslash I_b/J_b$ and $\varphi \backslash I_a/J_a$ satisfy the Pl\"ucker flag relations \eqref{flag equation}. Let $\{w_1, \dotsc, w_{r_b+1}\}$ and $\{f_1, \dotsc, f_{r_a-1}\}$ be subsets of $E$. Define the following two sequences
\[
\begin{aligned}
    & (w_{r_b+2}, \dotsc, w_{r+1}) = (y_1, \dotsc, y_\ell, z_1, \dotsc, z_w, v_1, \dotsc, v_m)
    \\
\text{and }\; & (f_{r_a}, \dotsc, f_{r-1}) = (y_1, \dotsc, y_\ell, y_{\ell+1}, \dotsc, y_t, v_1, \dotsc, v_m, z_1, \dotsc, z_w).
\end{aligned}
\]

Then 
\[
\begin{aligned}
    & \underset{k = 1}{\overset{r_b+1}{\sum}} \epsilon^k \varphi \backslash I_b/J_b(w_1, \dotsc, \widehat{w_k}, \dotsc w_{r_b+1}) \cdot \varphi \backslash I_a/J_a(w_k, f_1, \dotsc, f_{r_a-1})
    \\
=\; & \underset{k = 1}{\overset{r_b+1}{\sum}} \epsilon^k \varphi(w_1, \dotsc, \widehat{w_k}, \dotsc w_{r+1}) \cdot \varphi(w_k, f_1, \dotsc, f_{r-1})
\\
=\; & \, \underset{k = 1}{\overset{r+1}{\sum}} \, \epsilon^k \varphi(w_1, \dotsc, \widehat{w_k}, \dotsc w_{r+1}) \cdot \varphi(w_k, f_1, \dotsc, f_{r-1}),
\end{aligned}
\]
as the terms for $k=r_b+2,\dotsc,r+1$ are zero, because $\{w_{r_b+2}, \dotsc, w_{r+1}\} \subseteq \{f_{r_a}, \dotsc, f_{r-1}\}$. As $\varphi$ is a Grassmann-Plücker function, the last sum is in $N_F$.
\end{proof}

\begin{rem}\label{rem: flags of minors over fields}
 By \cite{Kung77}, every flag matroid is a sequence of minors. This also holds for flag matroids over a field $K$ since the association $M'\mapsto\big(M'\minor{I_i}{J_i}\big)_{i=1,\dotsc,s}$ defines a $\GL(E,K)$-equivariant rational map $\Gr(r_s,E')(K)\DashedArrow[->]\Fl(\br,E)(K)$ where $\br=(r_1,\dotsc,r_s)$, $E=\{1,\dotsc,n\}$ and $E'=\{1,\dotsc,n+r_s-r_1\}$. Since its image is not empty and since $\GL(E,K)$ acts transitively on $\Fl(\br,E)(K)$, we conclude that every flag $K$-matroid is a flag of minors of a $K$-matroid $M'$ on $E'$.
 
 Las Vergnas expected that the same holds true for oriented flag matroids of rank $(r_1,r_2)$, i.e., that every quotient $M_2\twoheadrightarrow M_1$ of oriented matroids is of the form $M'\backslash I\twoheadrightarrow M'/I$ for some oriented matroid $M'$ on a larger set $E'=E\cup I$. This was however disproven by Richter-Gebert in \cite[Cor.\ 3.5]{Richter-Gebert93}. 

 For applications to the representation theory of flag matroids, it would be useful to get a better hold on the question for which $\br$, $E$ and $F$ all flag $F$-matroids of rank $\br$ on $E$ are flags of minors. 
\end{rem}

In the special case of rank $(r,r+1)$, we obtain a positive answer for perfect tracts.

\begin{prop}\label{prop: contraction-deletion flag}
 Let $F$ be a perfect tract and $\bM$ a flag $F$-matroid of rank $(r,r+1)$. Let $\widehat{E}=E\sqcup\{e\}$. Then there is an $F$-matroid $\widehat{M}$ of rank $r+1$ on $\widehat{E}$ such that $\bM=(\widehat{M}/e,\, \widehat{M}\backslash e)$. More precisely, the set of $F$-matroids $\widehat{M}$ with $\bM=(\widehat{M}/e,\, \widehat{M}\backslash e)$ stays in bijection with $F^\times$.
\end{prop}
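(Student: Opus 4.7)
The plan is to fix Grassmann-Pl\"ucker representatives $\mu\colon E^r\to F$ of $M_1$ and $\nu\colon E^{r+1}\to F$ of $M_2$, and to construct an explicit extension $\widetilde\varphi\colon \widehat E^{r+1}\to F$ that represents a candidate $\widehat M$. The alternating axiom forces the shape of this extension: set $\widetilde\varphi(z_1,\dotsc,z_{r+1}):=\nu(z_1,\dotsc,z_{r+1})$ when all $z_i$ lie in $E$, set $\widetilde\varphi(z_1,\dotsc,z_{r+1}):=\epsilon^{r+1-k}\mu(z_1,\dotsc,\widehat{z_k},\dotsc,z_{r+1})$ when a unique $z_k$ equals $e$, and let $\widetilde\varphi$ vanish on all other tuples. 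Once $\widetilde\varphi$ is shown to be a Grassmann-Pl\"ucker function, the contraction and deletion formulas immediately give $\widehat M/e=[\mu]=M_1$ and $\widehat M\backslash e=[\nu]=M_2$ for $\widehat M:=[\widetilde\varphi]$; the heart of the argument is therefore verifying the Pl\"ucker relations for $\widetilde\varphi$.

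I would organize this verification by cases on where $e$ occurs among the fixed $(x_1,\dotsc,x_r)$ and the varying $(y_1,\dotsc,y_{r+2})$ arguments of a given Pl\"ucker relation of $\widetilde\varphi$. If $e$ lies in neither tuple, the identity is the Pl\"ucker relation of $\nu$. If $e$ lies only among the $x_i$, one reorders to assume $e=x_r$, after which $\widetilde\varphi(y_k,x_1,\dotsc,x_{r-1},e)=\mu(y_k,x_1,\dotsc,x_{r-1})$ makes the identity coincide with the flag Pl\"ucker relation of $(M_1,M_2)$. If $e$ lies in both tuples, the term where $e$ appears twice vanishes and, after a direct sign computation, what remains is a unit multiple of the Pl\"ucker relation of $\mu$. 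The essential case is when $e$ occurs only among the $y_j$, say $y_1=e$ after reordering; the Pl\"ucker identity for $\widetilde\varphi$ then collapses, up to a power of $\epsilon$, to the \emph{reverse} flag Pl\"ucker relation
\[
 \epsilon\,\mu(x_1,\dotsc,x_r)\,\nu(y_2,\dotsc,y_{r+2}) \;+\; \sum_{k=2}^{r+2}\epsilon^k\,\nu(y_k,x_1,\dotsc,x_r)\,\mu(y_2,\dotsc,\widehat{y_k},\dotsc,y_{r+2}) \;\in\; N_F,
\]
in which $\mu$ (rank $r$) and $\nu$ (rank $r{+}1$) play the opposite roles from those in the flag relation defining $M_1\twoheadleftarrow M_2$. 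This is the main obstacle, and the phase-hyperfield counterexample in \autoref{rem: vectors over the triangle hyperfield} shows that it is genuinely not a formal consequence of the ordinary flag Pl\"ucker relations.

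The perfectness of $F$ is needed precisely here. By \autoref{thm: duality} the dual sequence $(M_2^\ast,M_1^\ast)$ is again a flag $F$-matroid, and its flag Pl\"ucker relations hold for the dual Grassmann-Pl\"ucker functions $\nu^\ast$ and $\mu^\ast$. My plan is to expand those relations through the explicit formulas $\varphi^\ast(\vec a)=\sign(\vec a,\vec b)\,\overline{\varphi(\vec b)}$ and the involution of $F$, obtaining identities on complementary tuples in $E$; a sanity check in rank $(1,2)$ with $n=3$ produces the reverse flag Pl\"ucker relation above on the nose, and I expect the general case to follow the same pattern, with the ordinary Pl\"ucker relations of $\mu$ and $\nu$ disposing of the diagonal terms that arise from overlapping tuples. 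Perfectness is invoked through the covector cryptomorphism of \autoref{thm: cryptomorphisms for perfect tracts}, which ensures the equivalence of the flag-matroid condition with its dual formulation. For the bijection with $F^\times$, any $\widehat M$ with the prescribed minors has a representative restricting to $F^\times$-multiples of $\mu$ on tuples containing $e$ once and of $\nu$ on tuples in $E^{r+1}$; after rescaling to normalize the $\nu$-multiple, the relative scalar $\alpha\in F^\times$ between the two restrictions is the only free parameter, and every $\alpha$ yields a valid $\widehat M$ because the Pl\"ucker identities are bilinear in $\mu$ and $\nu$ separately, while axiom (T3) preserves $N_F$ under multiplication by units.
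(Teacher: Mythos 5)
Your construction of $\widetilde\varphi$, the easy cases of the case analysis, and the $F^\times$-counting at the end all match what is needed (and the homogeneity-in-$\alpha$ argument for the scaling is fine). The gap is exactly where you locate the difficulty: the case where $e$ occurs only among the $y_j$, i.e.\ the ``reverse'' flag relation. Your proposed derivation of it --- expand the flag Pl\"ucker relations of the dual pair $(M_2^\ast,M_1^\ast)$ through $\varphi^\ast(\vec a)=\sign(\vec a,\vec b)\,\overline{\varphi(\vec b)}$ --- does not produce that relation. If you carry out the expansion in general, each term $\mu^\ast(u_1,\dotsc,\widehat{u_k},\dotsc,u_{n-r+1})\,\nu^\ast(u_k,v_1,\dotsc)$ becomes, up to signs and the involution, $\mu\big(X\cup\{u_k\}\big)\,\nu\big(Y\setminus\{u_k\}\big)$ with $\#X=r-1$ and $\#Y=r+2$; in other words, dualizing the forward family reproduces the forward family (this is essentially the content and the proof of \autoref{prop-duality}), whereas the reverse relation has fixed set of size $r$ on the $\nu$-side, varying set of size $r+1$ on the $\mu$-side, plus the extra term $\mu(x_1,\dotsc,x_r)\nu(y_2,\dotsc,y_{r+2})$. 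Your sanity check with rank $(1,2)$ and $n=3$ is misleading: with only three ground elements the reverse relation coincides, after reindexing, with a forward relation --- this is precisely the three-term phenomenon, and it does not persist for larger $n$. A further warning sign is that perfectness never actually enters your argument: \autoref{prop-duality} and the expansion of dual Grassmann--Pl\"ucker functions are valid over every tract, so if your route worked it would prove the proposition without the perfectness hypothesis, which the paper does not claim and which is doubtful; the appeal to \autoref{thm: cryptomorphisms for perfect tracts} in your sketch is not load-bearing.

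The paper uses perfectness in a different, and decisive, way: by \cite[Thm.\ 3.46]{Baker-Bowler19}, over a perfect tract it suffices to verify the Pl\"ucker relations \ref{GP3} for $\widetilde\varphi$ only when the index sets satisfy $\#(I\setminus J)=3$, i.e.\ only the three-term relations. In that restricted setting the problematic case ($e$ among the $y$'s but not the $x$'s) is harmless: all but three terms vanish because of repeated entries, and the surviving three terms can be reindexed (as in Case~4 of the paper's proof, with $(z_1,\dotsc,z_{r+2})=(b,c,d,y_4,\dotsc,y_{r+2})$) into a \emph{forward} flag Pl\"ucker relation for $M_\nu\twoheadrightarrow M_\mu$, so no reverse relation is ever needed. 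To repair your proof you should either insert this three-term reduction at the start of the verification, or give an independent proof that quotients over perfect tracts satisfy the reverse incidence relations --- the latter is a genuine statement requiring more than duality, e.g.\ an orthogonality argument using $\cV(M)\perp\cV^\ast(M)$, and is not supplied by your sketch.
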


\begin{proof}
Let $\mu: E^r\rightarrow F$ and $\nu: E^{r+1}\rightarrow F$ be Grassmann-Plücker functions such that $\bM = (M_\mu, M_\nu)$. We define the function $\varphi:\widehat{E}^{r+1}\to F$ by
\[
 \varphi(x_1, \dotsc, x_{r+1}) \ = \ 
 \begin{cases}
    \nu(x_1, \dotsc,x_{r+1}) & \text{if } e\notin\{x_1, \dotsc, x_{r+1}\}; \\
    \epsilon^{r+1-\ell}\mu(x_1, \dotsc, \widehat{x_\ell}, \dotsc, x_{r+1}) 
                             & \text{if } e=x_\ell\notin\{x_1,\dotsc,\widehat{x_\ell},\dotsc,x_{r+1}\}; \\
    0                        & \text{if } e=x_\ell=x_k\text{ for }\ell\neq k.
 \end{cases}
\]
We aim to show that $\varphi$ is a Grassmann-Plücker function and $\bM = (M_\varphi/e, M_\varphi\backslash e)$. As $\nu$ is not identically zero, $\varphi$ satisfies \ref{GP1}. Property \ref{GP2} is obvious but for the case that $e=x_i\notin\{x_1,\dotsc,\widehat{x_i},\dotsc,x_{r+1}\}$ where we find
\[
\begin{aligned}
    & \varphi(x_1, \dotsc, x_i, \dotsc, x_j, \dotsc,x_{r+1})\\
  = \;& \epsilon^{r+1-i}\mu(x_1, \dotsc, x_{i-1}, \widehat{x_i}, x_{i+1}, \dotsc, x_j, \dotsc, x_{r+1})\\
  = \; & \epsilon^{(r+1-i)+j-(i+1)}\mu(x_1, \dotsc, x_{i-1}, x_j, x_{i+1}, \dotsc, x_{j-1}, \widehat{x_i}, x_{j+1}, \dotsc, x_{r+1})\\
  = \; & \epsilon \cdot \varphi(x_1, \dotsc, x_j, \dotsc, x_i, \dotsc,x_{r+1}).
\end{aligned}
\]
Since $F$ is perfect, we only need to show that $\varphi$ satisfies \ref{GP3} for subsets $I=\{y_1, \dotsc, y_{r+2}\}$ and $J=\{x_1, \dotsc, x_{r}\}$ of $E$ with $\# I\backslash J = 3$ (see \cite[Thm.\ 3.46]{Baker-Bowler19}). As $\varphi$ satisfies \ref{GP2}, one can suppose that $a:= y_1, \,b:=y_2, \,c:=y_3$ are the elements of $I\backslash J$, $d=x_1$ is the unique element in $J\backslash I$ and $x_i=y_{i+2}$ for $i=2, \dotsc, r$. For $w,z \in \{a, b, c, d\}$ define $\widehat{\varphi}(w, z):=\varphi(w, z, y_4, \dotsc, y_{r+2})$. Note that 
\begin{equation}
\label{equation aux}
    \underset{k=1}{\overset{r+2}{\sum}} \epsilon^k \cdot \varphi(y_1, \dotsc, \widehat{y_k}, \dotsc, y_{r+2}) \cdot \varphi(y_k, x_1, \dotsc, x_{r})
\end{equation}
\[
=\; \epsilon^1 \cdot \widehat{\varphi}(b, c) \cdot \widehat{\varphi}(a,d) 
  + \epsilon^2 \widehat{\varphi}(a, c) \cdot \widehat{\varphi}(b,d)
+ \epsilon^3 \cdot \widehat{\varphi}(a, b) \cdot \widehat{\varphi}(c, d). 
\]
We have 4 cases to analyze.

\medskip\noindent
\textbf{Case 1:} If $e\notin I\cup J$, then equation \eqref{equation aux} becomes
\[
 \underset{k=1}{\overset{r+2}{\sum}} \epsilon^k \cdot \nu(y_1, \dotsc, \widehat{y_k}, \dotsc, y_{r+2}) \cdot \nu(y_k, x_1, \dotsc, x_{r}),
\]
which is in $N_F$ because $\nu$ is a Grassmann-Pl\"ucker function.

\medskip\noindent
\textbf{Case 2:} If $e = y_\ell = x_{\ell+2}\in I\cap J$, then we define $(t_1, \dotsc, t_{r+1}) = (y_1, \dotsc, \widehat{y_\ell}, \dotsc, y_{r+2})$ and $(u_1, \dotsc, u_{r-1}) = (x_1, \dotsc, \widehat{x_{\ell-2}}, \dotsc, x_{r})$. With this, equation \eqref{equation aux} becomes
\[
\begin{aligned}
    & \underset{k=1}{\overset{\ell-1}{\sum}} \epsilon^{k} \cdot \mu(y_1, \dotsc, \widehat{y_k}, \dotsc, \widehat{y_\ell}, \dotsc, y_{r+2}) \cdot \mu(y_k, x_1, \dotsc, \widehat{x_{\ell-2}}, \dotsc, x_{r})
    \\
+ \;& \epsilon^\ell\cdot \nu(y_1, \dotsc, \widehat{y_\ell}, \dotsc, y_{r+2})\cdot 0
    \\ 
+ \;& \underset{k=\ell+1}{\overset{r+2}{\sum}} \epsilon^{k-1} \cdot \mu(y_1, \dotsc, \widehat{y_\ell}, \dotsc, \widehat{y_k}, \dotsc, y_{r+2}) \cdot \mu(y_k, x_1, \dotsc, \widehat{x_{\ell-2}}, \dotsc, x_{r})
    \\
= \;& \underset{k=1}{\overset{r+1}{\sum}} \epsilon^{k} \cdot \mu(t_1, \dotsc, \widehat{t_k}, \dotsc, t_{r+1}) \cdot \mu(t_k, u_1, \dotsc, u_{r-1}),
\end{aligned}
\]
which is in $N_F$ because $\mu$ is a Grassmann-Pl\"ucker function.

\medskip\noindent
\textbf{Case 3:} If $e=d$, then equation \eqref{equation aux} becomes
\[
\begin{aligned}
    & \epsilon^{r}\nu(b, c, y_4, \dotsc, y_{r+2}) \cdot \mu(a, x_2, \dotsc, x_r)
    \\
+ \;& \epsilon^{r+1}\nu(a,c, y_4, \dotsc, y_{r+2}) \cdot \mu(b, x_2, \dotsc, x_r)
\\
+ \;& \epsilon^{r+2}\nu(a, b, y_4, \dotsc, y_{r+2}) \cdot \mu(c, x_2, \dotsc, x_r) 
\\
=\; & \epsilon^{r-1}\cdot \underset{k = 1}{\overset{r+2}{\sum}} \epsilon^k \nu(y_1, \dotsc, \widehat{y_k}, \dotsc y_{r+2}) \cdot \mu(y_k, x_2, \dotsc, x_{r}),
\end{aligned}
\]
which is in $N_F$ because $M_\nu\twoheadrightarrow M_\mu$.

\medskip\noindent
\textbf{Case 4:} The cases for $e\in\{a,b,c\}$ are similar to prove. We demonstrate the case $e=a$, in which equation \eqref{equation aux} becomes
\[
\begin{aligned}
    & \epsilon^{(r+1)+2} \nu(b, c, y_4, \dotsc, y_{r+2}) \cdot \mu(d, x_2, \dotsc, x_r)
    \\
+\; & \epsilon^{(r+2)}  \mu(c, x_2, \dotsc, x_r) \cdot \nu(b, d, y_4, \dotsc, y_{r+2})
\\
+\; & \epsilon^{(r+3)-2}  \mu(b, x_2, \dotsc, x_r) \cdot \nu(c, d, y_4, \dotsc, y_{r+2})
\\
=\; & \epsilon^{r}\cdot \underset{k = 1}{\overset{r+2}{\sum}} \epsilon^k \nu(z_1, \dotsc, \widehat{z_k}, \dotsc z_{r+2}) \cdot \mu(z_k, x_2, \dotsc, x_{r}),
\end{aligned}
\]
which is in $N_F$ because $M_\nu\twoheadrightarrow M_\mu$, where $(z_1, \dotsc, z_{r+2}) := (b, c, d, y_4, \dotsc, y_{r+2})$.

To conclude, this shows that $\varphi$ is a Grassmann-Pl\"ucker function. Since $\varphi/e = \mu$ and $\varphi\backslash e= \nu$, one has $(M_\varphi/e, M_\varphi\backslash e) = \bM$.

For the last part of the theorem, it is enough to prove that the set of Grassmann-Pl\"ucker functions $\lambda$ with $\bM = (M_\lambda/e, M_\lambda\backslash e)$ stays in bijection with $(F^\times)^2$. This follows from the following two facts:

\begin{enumerate}
    \item If $\lambda$ is a Grassmann-Pl\"ucker function on $\widehat{E}$ satisfying $\bM = (M_\lambda/e, M_\lambda\backslash e)$, then there are $\alpha$, $\beta\in F^\times$ such that $\lambda/e = \alpha \cdot \mu$ and $\lambda\backslash e = \beta \cdot \nu$.
    \item Conversely, given $\alpha$, $\beta\in F^\times$, by a construction analogue to that of $\varphi$ above, there is a unique Grassmann-Pl\"ucker function $\lambda$ on $\widehat{E}$ such that $\lambda/e = \alpha \cdot \mu$ and $\lambda\backslash e = \beta \cdot \nu$, which implies 
$\bM = (M_\lambda/e, M_\lambda\backslash e)$.
\end{enumerate}
This concludes the prof of \autoref{prop: contraction-deletion flag}.
\end{proof}

\begin{cor}\label{cor: lifting flag matroids}
 Let $F$ be a perfect tract. Let $M$ and $N$ be $F$-matroids with respective Grassmann-Pl\"ucker functions $\mu:E^r\to F$ and $\nu:E^{r+1}\to F$. Assume that $(\underline{M},\underline{N})$ is a flag matroid. Then $(M,N)$ is a flag $F$-matroid if 
 \begin{multline*}
  \mu(a,x_1,\dotsc,x_{r-1})\cdot\nu(b,c,x_1,\dotsc,x_{r-1}) \ - \ \mu(b,x_1,\dotsc,x_{r-1})\cdot\nu(a,c,x_1,\dotsc,x_{r-1})\\
  + \ \mu(c,x_1,\dotsc,x_{r-1})\cdot\nu(a,b,x_1,\dotsc,x_{r-1}) \quad \in \ N_F
 \end{multline*}
 for all $x_1,\dotsc,x_{r-1},a,b,c\in E$.
\end{cor}

\begin{proof}
 Let $\widehat E=E\sqcup \{e\}$ and $\varphi:\widehat E^{r+1}\to F$ be defined by 
 \[
  \varphi(x_1, \dotsc, x_{r+1}) \ = \ 
  \begin{cases}
     \nu(x_1, \dotsc,x_{r+1}) & \text{if } e\notin\{x_1, \dotsc, x_{r+1}\}; \\
     \epsilon^{r+1-\ell}\mu(x_1, \dotsc, \widehat{x_\ell}, \dotsc, x_{r+1}) 
                              & \text{if } e=x_\ell\notin\{x_1,\dotsc,\widehat{x_\ell},\dotsc,x_{r+1}\}; \\
     0                        & \text{if } e=x_\ell=x_k\text{ for }\ell\neq k.
  \end{cases}
 \]
 Following through the steps of the proof of \autoref{prop: contraction-deletion flag}, we see that the relations in the statement of the corollary are sufficient to prove that $\varphi$ is a weak Grassmann-Pl\"ucker function. 
 

 Since $(\underline{M},\underline{N})$ is a flag matroid, it is of the form $(\widetilde M/e,\widetilde M\backslash e)$ for a matroid $\widetilde M$ on $\widehat E$ by \autoref{prop: contraction-deletion flag}, which is unique since $\K^\times=\{1\}$. Since the construction of $\varphi$ is formally independent of the tract $F$, it is evident that $\varphi$ represents the matroid $\widetilde M$. We conclude that $\varphi$ defines an $F$-matroid $M_\varphi=[\varphi]$ by \cite[Thm.\ 3.46]{Baker-Bowler19}.
 
 By the construction of $\varphi$, we have $M=M_\varphi/e$ and $N=M_\varphi\backslash e$. Thus it follows from \autoref{thm: flags of minors} that $(M,N)=(M_\varphi/e,M_\varphi\backslash e)$ is a flag $F$-matroid. 
\end{proof}

\section{The moduli space of flag matroids}
\label{section: moduli space of flag matroids}

In this part, we construct the moduli space $\Fl(r_1,\dots,r_s;E)$ of flag matroids, which adds the cryptomorphic description of a flag $F$-matroid as an $F$-rational point of $\Fl(r_1,\dots,r_s;E)$. This extends results by Baker and the second author in \cite{Baker-Lorscheid21} from matroids to flag matroids.

Our construction of $\Fl(r_1,\dots,r_s;E)$ utilizes ordered blue schemes, a theory that was developed in \cite{Lorscheid15} and \cite{Baker-Lorscheid21}. Since we cannot present the necessary background on this theory in a compact way, we assume that the reader is familiar with the latter paper by Baker and the second author. In particular, we assume that the reader is familiar with the terminologies and notations for ordered blueprints; see \cite{Baker-Lorscheid21} for details.

Throughout the section, we fix a ground set $E=\{1,\dotsc,n\}$ and a rank $\br=(r_1,\dotsc,r_s)$ with $0\leq r_1\leq\dotsb\leq r_s\leq n$. For $\bI=(x_1,\dotsc,x_r)\in E^r$ and $e\in E$, we use the notations $\bI e=(x_1,\dotsc,x_r,e)$ and $|\bI|=\{x_1,\dotsc,x_r\}$.

\subsection{Idylls and ordered blueprints}

We recall from \cite[Thm.\ 2.21]{Baker-Lorscheid21} that a tract $F=(F^\times,N_F)$ can be realized as the ordered blueprint $B=F^\oblpr$ with underlying monoid $B^\bullet=F$, ambient semiring $B^+=\N[F^\times]$ and (additive and multiplicative) partial order $\fr_B \ = \ \gen{0\leq \sum a_i\mid \sum a_i\in N_F}$ on $B^+$. An \emph{idyll} is an ordered blueprint of the form $B=F^\oblpr$ for a tract $F$.

The association $F\mapsto F^\oblpr$ extends naturally to a functor $(-)^\oblpr:\Tracts\to\OBlpr$ from tracts to ordered blueprints that comes with a right adjoint $(-)^\tract:\OBlpr\to\Tracts$. Composing both functors sends a tract $F$ to the tract $F'=(F^\oblpr)^\tract$ that equals $F$ as a monoid and whose nullset $N_{F'}$ equals the closure of $N_F$ under addition. This means that the functors $(-)^\tract$ and $(-)^\oblpr$ restrict to mutually inverse equivalence of categories between idylls and tracts whose nullset is additively closed. 

By abuse of language, we call tracts with additively closed nullset also idylls and we do not make a distinction between the tract and the associated ordered blueprint. In particular, we denote the associated ordered blueprint with the same symbol, which applies, for instance, to the idylls $\Funpm$, $\K$, $\S$ and $\T$. Note that, more generally, all partial fields and all hyperfields are idylls. Following this logic, we define a \emph{flag $F$-matroid} for an idyll $F$ with associated tract $F'=F^\tract$ as a flag $F'$-matroid.

Note that every idyll is an $\Funpm$-algebra in a unique way. 

\begin{ex}
 The incarnations of $\Funpm$ and $\K$ as ordered blueprints are
 \[
  \Funpm \ = \ \bpquot{\{0,\pm1\}}{\gen{0\leq 1+(-1)}} \quad \text{and} \quad \K \ = \ \bpquot{\{0,1\}}{\gen{0\leq 1+1,\ 0\leq 1+1+1}}.
 \]
 The associated ordered blueprint $B=K^\oblpr$ of a field $K$ has underlying monoid $B^\bullet=K$, ambient semiring $B^+=\N[K^\times]$ and partial order
  $\fr_B \ = \ \gen{0\leq \sum a_i \mid \sum a_i=0\text{ in $K$}}$.
\end{ex}

\subsection{Flag matroid bundles}
\label{subsection: flag matroid bundles}

As a first step towards the moduli space of flag matroids, we generalize the notion of a flag $F$-matroid to that of a flag matroid bundle on an (ordered blue) $\Funpm$-scheme. 

\begin{df}
 Let $X$ be an $\Funpm$-scheme, $E=\{1,\dotsc,n\}$ and $\br=(r_1,\dotsc,r_s)$. A \emph{flag of Grassmann-Pl\"ucker functions (of rank $\br$ on $E$) in $X$} are line bundles $\cL_1,\dotsc,\cL_s$ on $X$ and functions
 \[
  \varphi_i:\ E^{r_i} \ \longrightarrow \ \Gamma(X,\cL_i)
 \]
 for $i=1,\dotsc,s$ such that for all $1\leq i\leq j\leq s$,
 \begin{enumerate}[label = (GP\arabic*${}^\ast$)]
  \item\label{GP1*} the global sections $\varphi_i(\bI)$ with $\bI\in E^{r_i}$ \emph{generate $\cL_i$}, i.e., for every point $x\in X$, there is an $\bI\in E^{r_i}$ such that the class of $\varphi_i(\bI)$ in $\cO_{X,x}$ is a unit;
  \item\label{GP2*} $\varphi_i$ is \emph{alternating}, i.e., $\varphi_i(x_{\sigma(1)},\dotsc,x_{\sigma(r_i)})=\sign(\sigma)\cdot\varphi_i(x_1,\dotsc,x_{r_i})$ and $\varphi_i(\bI)=0$ if $\#|\bI|<r_i$;
  \item\label{GP3*} $\varphi_i$ and $\varphi_j$ satisfy the \emph{Pl\"ucker flag relations}
        \[
         0 \ \leq \ \sum_{k=1}^{r_j+1} \ \epsilon^k \, \cdot \, \varphi_i(y_k,\, x_1,\dotsc,\, x_{r_i-1}) \, \cdot \, \varphi_j(y_1,\dotsc,\, \widehat{y_k},\dotsc,\, y_{r_j+1})
        \]
        as relations in $\Gamma(X,\cL_i\otimes\cL_j)$ for all $x_1,\dotsc,x_{r_i-1},y_1,\dotsc,y_{r_j+1}\in E$.
 \end{enumerate}
 Two flags of Grassmann-Pl\"ucker functions 
 \[
  \big(\varphi_i:E^{r_i}\to\Gamma(X,\cL_i)\big)_{i=1,\dotsc,s} \quad \text{and} \quad \big(\varphi'_i:E^{r_i}\to\Gamma(X,\cL'_i)\big)_{i=1,\dotsc,s}
 \]
 are \textit{equivalent} if there exists a collection of isomorphisms $\iota_i:\cL_i\to \cL_i'$ of line bundles (for $i=1,\dotsc,s$) such that $\varphi'_i=\iota_{i,X}\circ\varphi_i$ for $i=1,\dotsc,s$ where $\iota_{i,X}:\Gamma(X,\cL_i)\to\Gamma(X,\cL_i')$ is $\iota_i$ evaluated on global sections. A \emph{flag matroid bundle (of rank $(r_1,\dotsc,r_s)$ on $E$) over $X$} is the equivalence class of a flag of Grassmann-Pl\"ucker functions (of rank $(r_1,\dotsc,r_s)$ on $E$) in $X$.
\end{df}

\begin{rem}\label{rem: flag matroids as flag matroid bundles}
 Flag matroid bundles extend the notion of flag matroids in the following sense. Let $F$ be an idyll. Then there is a canonical bijection
 \[
  \big\{\text{flag $F$-matroids}\big\} \ \longrightarrow \ \big\{\text{flag matroid bundles over $\Spec F$}\big\},
 \]
 which is given as follows. Note that we have a canonical bijection $\iota:F\to \Gamma(X,\cO_X)$ for $X=\Spec F$. As explained in \cite[Prop.\ 5.3]{Baker-Lorscheid21}, we can associate with a Grassmann-Pl\"ucker function $\varphi:E^r\to F$ the Grassmann-Pl\"ucker function $\iota\circ\varphi:E^r\to\Gamma(X,\cO_X)$, which yields a (well-defined) bijection $M\mapsto\widetilde M$ between $F$-matroids and matroid bundles over $\Spec F$.
 
 Unraveling definitions, we see that a sequence $(\varphi_1,\dotsc,\varphi_s)$ of Grassmann-Pl\"ucker functions $\varphi_i:E^{r_i}\to F$ satisfies the Pl\"ucker flag relations if and only if the associated sequence $(\iota\circ\varphi_1,\dotsc,\iota\circ\varphi_s)$ satisfies them (considered as Grassmann-Pl\"ucker functions in $X$). Taking classes yields the desired bijection $\bM=(M_1,\dotsc,M_s)\mapsto(\widetilde M_1,\dotsc,\widetilde M_s)=\widetilde\bM$.
\end{rem}

\begin{rem}
 Note that a Grassmann-Pl\"ucker function of rank $r$ on $E$ in \cite{Baker-Lorscheid21} is defined as a function on the collection $\binom Er$ of $r$-subsets of $E$. This is equivalent with our definition since functions on $\binom Er$ identify with alternating functions on $E^r$ by choosing a total order on $E$ (which we do implicitly by identifying $E$ with $\{1,\dotsc,n\}$).
 
 While the Pl\"ucker coordinates of Grassmannians are usually indexed by elements of $\binom Er$, the representation of matroids as (alternating) Grassmann-Pl\"ucker functions on $E^r$ is more natural with relation to several constructions in matroid theory, which is our reason to choose alternating functions over functions on $r$-subsets.
\end{rem}

\subsection{The moduli problem}
\label{subsection: the moduli problem}

Given an ordered blue $\Funpm$-scheme $X$, we define $\cFl(\br;E)(X)$ as the set of all flag matroid bundles of rank $\br$ on $E$ over $X$. A morphism $\alpha:X\to Y$ defines the map 
\[
 \alpha^{\ast}:\ \cFl(\br;E)(Y) \ \longrightarrow \ \cFl(\br;E)(X)
\]
by pulling back functions via $\alpha_Y^\sharp:\Gamma(Y,\cL)\to\Gamma(X,\alpha^\ast\cL)$: given a flag matroid bundle $\bM$ on $Y$ that is represented by a flag $\big(\varphi_i:E^{r_i}\to\Gamma(Y,\cL_i)\big)_{i=1,\dotsc,s}$, then we define $\alpha^\ast(\bM)$ as the flag matroid bundle on $X$ that is represented by $\big(\alpha_Y^\sharp\circ\varphi_i:E^{r_i}\to\Gamma(X,\alpha^\ast\cL_i)\big)_{i=1,\dotsc,s}$. We omit the verification that the latter family is indeed a flag of Grassmann-Pl\"ucker functions. We pose the following moduli problem:
\begin{quote}
 \it Is there an ordered blue $\Funpm$-scheme that represents $\cFl(\br;E)$?
\end{quote}
We work in the following sections towards its answer, which is the content of \autoref{thm: moduli property}.

\subsection{Flag varieties}
\label{subsection: flag varieties}

The answer to our moduli problem is given by flag varieties over $\Funpm$, which are closed subschemes of a product of Grassmannians that parametrize the matroid bundles of the flag. The locus of the flag variety is given by the Pl\"ucker flag relations, which are multi-homogeneous with respect to the Pl\"ucker coordinates of each Grassmannian in the product. In order to bypass a rigorous treatment of multi-homogeneous calculus in $\Fun$-geometry, we give an explicit description of flag varieties in terms of open affine coverings.

We define the flag variety \emph{$\Fl(\br;E)$} as a closed subscheme of the product space
\[
 \P_\Funpm^{n^{r_1}-1}\times \dotsb\times\P_\Funpm^{n^{r_s}-1} \ = \ \prod_{i=1}^s \ \Proj\, \Funpm[T_{\bI}\mid \bI\in E^{r_i}],
\]
which is covered by products $\prod_{i=1}^s U_{\bJ_i}$ of the canonical open subsets 
\[
 U_{\bJ_i} \ = \ \Spec\, \Funpm[T_\bI/T_{\bJ_i}\mid\bI\in E^{r_i}]
\]
of $\P_\Funpm^{n^{r_i}-1}$ where $\bJ_i\in E^{r_i}$ for $i=1,\dotsc,s$. 

\begin{df}
 The \emph{flag variety $\Fl(\br;E)$ (of type $\br$ on $E$) over $\Funpm$} is the closed subscheme of $\prod \P^{n^{r_i}-1}_\Funpm$ that is covered by the open subschemes  
 \[\textstyle
  \prod \ U_{\bJ_i} \ = \ \Spec \Big(\bpquot{\big(\bigotimes\Funpm[T_{i,\bI}/T_{i,\bJ_i}\mid\bI\in E^{r_i}]\big)}{\fr}\Big)
 \]
 where $\fr$ is generated by the following relations for $1\leq i\leq j\leq s$:
 \[\tag{Fl1}\label{Fl1}
  \frac{T_{i,(x_{\sigma(1)},\dotsc,x_{\sigma(r_i)})}}{T_{i,\bJ_i}} \ = \ \sign(\sigma) \ \cdot \ \frac{T_{i,(x_1,\dotsc,x_{r_i})}}{T_{i,\bJ_i}}
 \]
 for all $(x_1,\dotsc,x_{r_i})\in E^{r_i}$ and all permutations $\sigma\in \mathfrak{S}_{r_i}$;
 \[\tag{Fl2}\label{Fl2}
  \frac{T_{i,\bI}}{T_{i,\bJ_i}} \ = \ 0
 \]
 for all $\bI\in E^{r_i}$ with $\#|\bI|<r_i$;
 \[\tag{Fl3}\label{Fl3}
  0 \ \ \leq \ \  \sum_{k=1}^{r_j+1} \quad \epsilon^k \, \cdot \, \frac{T_{i,(y_k,\, x_1,\dotsc,\, x_{r_i-1})}}{T_{i,\bJ_i}} \, \cdot \, \frac{T_{j,(y_1,\dotsc,\, \widehat{y_k},\dotsc ,\, y_{r_j+1})}}{T_{j,\bJ_j}}
 \]
 for all $x_1,\dotsc,x_{r_i-1},y_1,\dotsc,y_{r_j+1}\in E$. The \emph{Pl\"ucker embedding} is the closed immersion
 \[
  \pl: \ \Fl(\br,E) \ \longrightarrow \ \prod \P^{n^{r_i}-1}_\Funpm
 \]
 as a subscheme.
\end{df}

Note that the intersection $V$ of affine opens $\prod U_{\bJ_i}$ and $\prod U_{\bJ'_i}$ is affine. Multiplying the defining relations of $\Gamma(\prod U_{\bJ_i})$ with the invertible section $T_{i,\bJ_i}/T_{i,\bJ'_i}$ on $V$ yields the defining relations of $\Gamma (\prod U_{\bJ'_i})$. This shows that $\Fl(\br,E)$ is well defined as a closed subscheme of $\prod \P^{n^{r_i}-1}_\Funpm$.

\begin{rem}
 Note that in the case of $s=1$ and $r_1=r$, the flag variety $\Fl(\br,E)$ is nothing else than a Grassmannian $\Gr(r,E)$ over $\Funpm$, which Baker and the second author called the \emph{matroid space $\Mat(r,E)$} in \cite{Baker-Lorscheid21} to distinct it from other models of Grassmannians in $\Fun$-geometry (e.g.\ see \cite{LopezPena-Lorscheid12}). We will drop this distinction in our text to appeal better to the reader's intuition from algebraic geometry.
 
 For arbitrary $s\geq1$, the Pl\"ucker embedding factors into closed immersions
 \[\textstyle
  \Fl(\br,E) \ \longrightarrow \ \prod_{i=1}^s \Gr(r_i,E) \ \longrightarrow \ \prod_{i=1}^s \P^{r_i-1}_\Funpm,
 \]
 i.e., the flag variety is a closed subscheme of a product of Grassmannians over $\Funpm$.
\end{rem}

\subsection{The universal flag matroid bundle}
\label{subsection: the universal flag matroid bundle}

The Pl\"ucker embedding $\pl:\Fl(\br,E) \to \prod\P^{n^{r_i}-1}_\Funpm$ endows the flag variety over $\Funpm$ with a flag matroid bundle, which is universal for all matroid bundles (see \autoref{thm: moduli property}) and which is defined as follows.

Let $\pl_j=\pi_j\circ\pl\,$ be the composition of the Pl\"ucker embedding $\pl\,$ with the $j$-th coordinate projection $\pi_j:\prod \P^{n^{r_i}-1}_\Funpm\to\P^{n^{r_j}-1}_\Funpm$. Let $\cO(1)$ be the first twisted sheaf on $\P^{n^{r_i}-1}_\Funpm$, which is generated by the sections $T_{\bI}$ with $\bI\in E^{r_i}$. 

\begin{df}
 The \emph{universal flag of Grassmann-Pl\"ucker functions (of rank $\br$ on $E$)} is the sequence of Grassmann-Pl\"ucker functions $\big(\varphi_i^\univ:E^{r_i}\to\Gamma(\Fl(\br,E),\cL_i^\univ)\big)$ with line bundles $\cL^\univ_i=\pl_i^\ast\big(\cO(1)\big)$ and with $\varphi_i^\univ(\bI)=\pl_i^\sharp(T_{\bI})$. The \emph{universal flag matroid bundle $\bM^\univ$ (of rank $\br$ on $E$)} is the equivalence class of the universal flag of Grassmann-Pl\"ucker functions.
\end{df}

Note that the universal flag of Grassmann-Pl\"ucker functions is indeed a flag of Grassmann-Pl\"ucker functions of rank $\br$ on $E$ over $\Fl(\br,E)$ since $\cL_i^\univ$ is generated by $\{\pl_i^\sharp(T_{\bI})\mid\bI\in E^{r_i}\}$, and since the relations \ref{GP2*} and \ref{GP3*} are satisfied by the defining relations \eqref{Fl1}--\eqref{Fl3} of $\Fl(\br,E)$. Consequently, $\bM^\univ$ is a flag matroid bundle on $\Fl(\br,E)$.

\subsection{The moduli property}
\label{subsection: the moduli property}

We are prepared to formulate the central result of this section.

\begin{thm}\label{thm: moduli property}
 The flag variety $\Fl(\br,E)$ is the fine moduli space of flag matroid bundles. More explicitly, the maps
 \[
  \begin{array}{cccc}
   \Phi_X: & \Hom(X,\;\Fl(\br,E)) &  \longrightarrow & \cFl(\br,E)(X) \\
           & \alpha:X\to\Fl(\br,E) & \longmapsto   & \alpha^\ast(\bM^\univ),
  \end{array}
 \]
 indexed by ordered blue $\Funpm$-schemes $X$, are functorial bijections.
\end{thm}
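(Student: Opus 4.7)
The plan is to mimic the moduli-space argument for the Grassmannian (the single-step case $s=1$) from \cite{Baker-Lorscheid21}, with the only new ingredient being that the Pl\"ucker flag relations \eqref{Fl3} linking the different Grassmannians of the product must be translated into, and verified as, the Pl\"ucker flag relations \ref{GP3*} of a flag of Grassmann-Pl\"ucker functions. Functoriality in $X$ is routine: for $\beta\colon Y\to X$ and $\alpha\colon X\to\Fl(\br,E)$, the compatibility of pullbacks of line bundles and their global sections yields $(\alpha\circ\beta)^\ast(\bM^\univ)=\beta^\ast\bigl(\alpha^\ast(\bM^\univ)\bigr)$, so $\Phi_X$ is natural in $X$.

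For surjectivity of $\Phi_X$, I would start from a flag matroid bundle on $X$ represented by $\bigl(\varphi_i\colon E^{r_i}\to\Gamma(X,\cL_i)\bigr)_{i=1,\dotsc,s}$ and construct the corresponding morphism $\alpha\colon X\to\Fl(\br,E)$ by gluing local pieces. By \ref{GP1*}, every point of $X$ admits an open neighborhood $U=U(\bJ_1,\dotsc,\bJ_s)$ on which each $\varphi_i(\bJ_i)$ is a unit section of $\cL_i$. Using $\varphi_i(\bJ_i)$ to trivialize $\cL_i|_U$, the quotients $\varphi_i(\bI)/\varphi_i(\bJ_i)$ become elements of $\Gamma(U,\cO_X)$, and I define $\alpha_U\colon U\to\prod U_{\bJ_i}$ by sending $T_{i,\bI}/T_{i,\bJ_i}\mapsto\varphi_i(\bI)/\varphi_i(\bJ_i)$. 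Property \ref{GP2*} yields the relations \eqref{Fl1} and \eqref{Fl2}, and property \ref{GP3*} yields \eqref{Fl3}, so $\alpha_U$ is a well-defined morphism into the affine chart $\prod U_{\bJ_i}$ of $\Fl(\br,E)$. By construction $\alpha^\ast(\bM^\univ)$ is represented by $(\varphi_i)$, so $\Phi_X(\alpha)=\bM$.

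For injectivity, if $\alpha^\ast(\bM^\univ)\cong(\alpha')^\ast(\bM^\univ)$ as flag matroid bundles, then the corresponding flags of Grassmann-Pl\"ucker functions agree up to simultaneous rescalings $\varphi_i\mapsto u_i\cdot\varphi_i$ with $u_i\in\Gamma(X,\cO_X^\times)$. Such rescalings leave the ratios $\varphi_i(\bI)/\varphi_i(\bJ_i)$ unchanged, so the two morphisms agree on every affine chart $\prod U_{\bJ_i}$ and therefore globally, giving $\alpha=\alpha'$.

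The main obstacle is the gluing step in the surjectivity argument: one must verify that the locally defined morphisms $\alpha_U$ patch coherently on overlaps $U(\bJ_1,\dotsc,\bJ_s)\cap U(\bJ'_1,\dotsc,\bJ'_s)$. The key algebraic observation is that the transition identity $T_{i,\bI}/T_{i,\bJ'_i}=(T_{i,\bI}/T_{i,\bJ_i})\cdot(T_{i,\bJ_i}/T_{i,\bJ'_i})$ on the overlap of two affine charts of $\Fl(\br,E)$ is matched exactly by the identity $\varphi_i(\bI)/\varphi_i(\bJ'_i)=\bigl(\varphi_i(\bI)/\varphi_i(\bJ_i)\bigr)\cdot\bigl(\varphi_i(\bJ_i)/\varphi_i(\bJ'_i)\bigr)$ on the corresponding overlap in $X$, which reflects the choice of trivialization of $\cL_i$. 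Once this compatibility is in place, the multi-step flag case introduces no additional difficulty beyond the single-step Grassmannian case, since \ref{GP3*} maps verbatim into \eqref{Fl3}.
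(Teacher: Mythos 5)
Your argument is correct, but it takes a more hands-on route than the paper. The paper's proof treats the single-Grassmannian machinery of \cite{Baker-Lorscheid21} as a black box: from \ref{GP1*} it invokes the universal property of projective space (\cite[Thm.\ 4.20]{Baker-Lorscheid21}) to get unique morphisms $\psi_i:X\to\P^{n^{r_i}-1}_\Funpm$ with compatible isomorphisms of line bundles, then uses \cite[Thm.\ 5.5]{Baker-Lorscheid21} to see that the product morphism lands in $\prod\Gr(r_i,E)$, and the only genuinely new step is the observation that the Pl\"ucker flag relations \ref{GP3*} force the image into the closed subscheme $\Fl(\br,E)$ cut out by \eqref{Fl3}; uniqueness of the factorization through a closed immersion then gives injectivity essentially for free. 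You instead inline that machinery: you build the morphism chart by chart on the opens where the $\varphi_i(\bJ_i)$ are units, check \eqref{Fl1}--\eqref{Fl3} directly from \ref{GP2*} and \ref{GP3*} after trivializing, and glue. This is more self-contained (it re-proves the relevant part of \cite[Thm.\ 4.20]{Baker-Lorscheid21} in this setting) at the cost of carrying out the chart and gluing bookkeeping in the ordered blue scheme framework, which is exactly what the cited results encapsulate. Two small points to tighten: in the injectivity step, an isomorphism of flag matroid bundles is a collection of line bundle isomorphisms $\iota_i:\cL_i\to\cL_i'$, not a global rescaling by units $u_i\in\Gamma(X,\cO_X^\times)$ --- the rescaling description is only valid locally after trivialization, though this suffices since the ratios $\varphi_i(\bI)/\varphi_i(\bJ_i)$ are unchanged; and you should note explicitly that $\alpha$ and $\alpha'$ send the same points into each chart $\prod U_{\bJ_i}$, because the locus where $\varphi_i(\bJ_i)$ is a unit in the stalks is an invariant of the isomorphism class of the bundle.
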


\begin{proof}
 Let us fix an $\Funpm$-scheme $X$. We begin with the construction of the inverse bijection $\Psi_X$ to $\Phi_X$ that maps a flag matroid bundle $\bM$ of rank $\br$ on $E$ over $X$ to a morphism $X\to\Fl(\br,E)$.
 
 Let $\big(\varphi_i:E^{r_i}\to\Gamma(X,\cL_i)\big)$ be a flag of Grassmann-Pl\"ucker functions in $X$ that represents the flag matroid bundle $\bM$. Since the global sections in the image of $\varphi_i$ generate the line bundle $\cL_i$, we can apply \cite[Thm.\ 4.20]{Baker-Lorscheid21}, which asserts the existence of a unique morphism $\psi_i:X\to\P^{n^{r_i}-1}$ and a unique isomorphism $\iota_i:\iota^\ast(\cO(1))\to\cL_i$ such that $\iota_i\big(\psi_i^\sharp(T_\bI)\big)=\varphi_i(\bI)$ where we fix an identification of the homogeneous coordinates of $\P^{n^{r_i}-1}$ with $\{T_\bI\mid \bI\in E^{r_i}\}$.
 
 Taking the product over all $i=1,\dotsc,s$ yields a morphism $\psi: X\to \prod\P^{n^{r_i}-1}$. Since the $\varphi_i$ are Grassmann-Pl\"ucker functions, \cite[Thm.\ 5.5]{Baker-Lorscheid21} applies and shows that the image of $\psi$ is contained in $\prod\Gr(r_i,E)$. Since the functions $\varphi_i$ satisfy moreover the Pl\"ucker flag relations, the image is, in fact, contained in $\Fl(\br,E)$, i.e., $\psi$ factors into a uniquely determined morphism $\alpha:X\to\Fl(\br,E)$ followed by the closed immersion $\Fl(\br,E)\to\prod\P^{n^{r_i}-1}$. We define $\Psi_X(\bM)=\alpha$.
 
 Next we show that $\Phi_X\circ\Psi_X=\id$. Since $\cL_i^\univ=\pi_i^\ast(\cO(1))$ for the coordinate projection $\pi_i:\Fl(\br,E)\to\P^{n^{r_i}-1}$, the isomorphism $\iota_i:\iota^\ast(\cO(1))\to\cL_i$ becomes an isomorphism $\widetilde\iota_i:\alpha^\ast(\cL_i^\univ)\to\cL_i$, and we have $\varphi_i(\bI)=\widetilde\iota_i\big(\alpha^\sharp(T_\bI)\big)$. Thus $\alpha^\ast(\bM^\univ)=\bM$, which shows that $\Psi_X$ is right inverse to $\Phi_X$.
 
 Next we show that $\Psi_X\circ\Phi_X=\id$. Let $\alpha:X\to\Fl(\br,E)$ be a morphism and $\bM=\alpha^\ast(\bM^\univ)$ the associated flag matroid bundle over $X$. Then $\bM$ is represented by the flag $(\varphi_1,\dotsc,\varphi_s)$ of Grassmann-Pl\"ucker functions 
 \[
  \varphi_i:\ E^{r_i} \ \stackrel{\varphi_i^\univ}\longrightarrow \ \Gamma\big(\Fl(\br,E),\cL_i^\univ\big) \ \stackrel{\alpha^\sharp}\longrightarrow \ \Gamma(X,\cL_i).
 \]
 The composition of $\alpha$ with the coordinate projection $\pi_i:\Fl(\br,E)\to\P^{n^{r_i}-1}$ yields the morphism $\alpha_i:X\to\P^{n^{r_i}-1}$, which satisfies $\alpha_i^\sharp(T_\bI)=\varphi_i(\bI)$. By the construction of $\Psi_X$, we have thus $\Psi_X(\bM)=\alpha$, which verifies that $\Psi_X$ is a left inverse to $\Phi_X$.
 
 We are left to show the functoriality of $\Phi_X$. Consider a morphism $\beta:Y\to X$ and $\alpha:X\to\Fl(\br,E)$. Then 
 \[
  \Phi_Y\big(\beta^\ast(\alpha)\big) \ = \ \Phi_Y(\alpha\circ\beta) \ = \ (\alpha\circ\beta)^\ast(\bM^\univ) \ = \ \beta^\ast\big(\alpha^\ast(\bM^\univ)\big) \ = \ \beta^\ast(\Phi_X(\alpha)\big),
 \]
 which shows that $\Phi_X$ is functorial in $X$.
\end{proof}

\begin{cor}\label{cor: flag matroids as rational points}
 Let $F$ be an idyll. Then there is a canonical bijection between the set $\Fl(\br,E)(F)$ of $F$-rational points of the flag variety $\Fl(\br,E)$ over $\Funpm$  and the set of flag $F$-matroids of rank $\br$ on $E$.
\end{cor}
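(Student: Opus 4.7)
The plan is to obtain the corollary as an immediate specialization of the moduli property established in \autoref{thm: moduli property}, combined with the identification of flag matroid bundles over $\Spec F$ with flag $F$-matroids recorded in \autoref{rem: flag matroids as flag matroid bundles}. Since the two ingredients are already in place, essentially no new calculation is required; the task is to compose the canonical bijections in the correct way and to verify that the resulting map is indeed the one expected.

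First, I would set $X=\Spec F$, which is an ordered blue $\Funpm$-scheme because every idyll $F$ is canonically an $\Funpm$-algebra. \autoref{thm: moduli property} then provides a functorial bijection
\[
 \Phi_{\Spec F}\colon \ \Fl(\br,E)(F) \ = \ \Hom\bigl(\Spec F,\,\Fl(\br,E)\bigr) \ \longrightarrow \ \cFl(\br,E)(\Spec F),
\]
sending a morphism $\alpha\colon\Spec F\to\Fl(\br,E)$ to the pullback $\alpha^\ast(\bM^\univ)$ of the universal flag matroid bundle.

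Next, I would invoke \autoref{rem: flag matroids as flag matroid bundles}, which gives a canonical bijection between flag $F$-matroids of rank $\br$ on $E$ and flag matroid bundles over $\Spec F$: a flag $\bM=(M_1,\dotsc,M_s)$ represented by Grassmann-Pl\"ucker functions $\varphi_i\colon E^{r_i}\to F$ corresponds to the flag of Grassmann-Pl\"ucker functions $\iota\circ\varphi_i\colon E^{r_i}\to\Gamma(\Spec F,\cO_{\Spec F})$ via the canonical identification $\iota\colon F\to\Gamma(\Spec F,\cO_{\Spec F})$. Composing the inverse of this bijection with $\Phi_{\Spec F}^{-1}$ produces the desired canonical bijection between $\Fl(\br,E)(F)$ and flag $F$-matroids of rank $\br$ on $E$.

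The only subtle point, which I would address explicitly, is the compatibility of the two identifications: a morphism $\alpha\colon\Spec F\to\Fl(\br,E)$ factors, via the Pl\"ucker embedding, through coordinate projections $\alpha_i\colon\Spec F\to\P_{\Funpm}^{n^{r_i}-1}$, and the pulled-back sections $\alpha^\sharp(\varphi_i^\univ(\bI))\in\Gamma(\Spec F,\alpha^\ast\cL_i^\univ)$ correspond, under the trivialization of $\alpha^\ast\cL_i^\univ$ on $\Spec F$ given by a choice of basis $\bJ_i$ with $\alpha^\sharp(T_{i,\bJ_i})\in F^\times$, precisely to the values $\varphi_i(\bI)\in F$ of a Grassmann-Pl\"ucker function representing the associated flag $F$-matroid. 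This is the content already used in the proof of \autoref{thm: moduli property}, so no new work is needed; the step I expect to require the most care is just the bookkeeping that confirms the composite bijection is canonical (i.e.\ independent of the chosen trivializations, which it is because changing the trivialization rescales $\varphi_i$ by an element of $F^\times$ and hence preserves its class).
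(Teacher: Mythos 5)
Your proposal is correct and follows exactly the paper's own route: the paper proves this corollary by combining \autoref{thm: moduli property} (applied to $X=\Spec F$) with \autoref{rem: flag matroids as flag matroid bundles}. The extra compatibility check on trivializations that you spell out is fine but already implicit in those two results, so nothing further is needed.
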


\begin{proof}
 This follows at once from \autoref{thm: moduli property} coupled with \autoref{rem: flag matroids as flag matroid bundles}.
\end{proof}

\begin{df}
 Given an idyll $F$ and a flag $F$-matroid $\bM$ of rank $\br$ on $E$, let $\widetilde\bM$ be the corresponding flag matroid bundle on $\Spec F$. We call the inverse image of $\widetilde\bM$ under $\Phi_X$ the \emph{characteristic morphism of $\bM$} and denote it by $\chi_\bM:\Spec F\to\Fl(\br,E)$. 
\end{df}

In other words, the characteristic morphism $\chi_\bM:\Spec F\to\Fl(\br,E)$ is the unique morphism with $\widetilde\bM=\chi_\bM^\ast(\bM^\univ)$.

\subsection{Projection onto subflags}
\label{subsection: projection onto subflags}

Let $\bi=(i_1,\dotsc,i_t)$ be a sequence of integers with $1\leq i_1<\dotsb<i_{t}\leq s$. Let $\br'=(r_{i_1},\dotsc,r_{i_{t}})$ and
\[
 \widetilde\pi_\bi: \ \prod_{i=1}^s \Gr(r_i,E) \ \longrightarrow \ \prod_{k=1}^{t} \Gr(r_{i_k},E)
\]
be the morphism that is induced by the coordinate projections.

\begin{prop}\label{prop: coordinate projections}
 The morphism $\widetilde\pi_\bi$ restricts to a morphism $\pi_\bi:\Fl(\br,E)\to\Fl(\br',E)$. Let $F$ be an idyll and $\bM=(M_1,\dotsc,M_s)$ a flag $F$-matroid of rank $\br$ on $E$ with characteristic morphism $\chi_\bM:\Spec F\to\Fl(\br,E)$. Then the characteristic morphism of the flag $F$-matroid $\bM'=(M_{i_1},\dotsc,M_{i_{t}})$ is
 \[
  \chi_{\bM'}=\pi_\bi\circ\chi_\bM: \ \Spec F \ \stackrel{\chi_\bM\;} \longrightarrow \ \Fl(\br,E) \ \stackrel{\pi_{\mathbf{i}}} \longrightarrow \ \Fl(\br',E).
 \]
\end{prop}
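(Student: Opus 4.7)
The plan is to split the proposition into its two assertions---first that $\widetilde\pi_\bi$ restricts to a morphism between the flag varieties, and second that this restriction implements the forgetful operation on flag matroids---and to prove each separately. The first assertion is geometric and reduces to comparing defining equations, while the second uses the moduli property of \autoref{thm: moduli property} together with compatibility of pullbacks of universal bundles.

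For the first assertion, I would observe that on the multi-homogeneous coordinates, the projection $\widetilde\pi_\bi$ corresponds to the assignment $T_{k,\bI}\mapsto T_{i_k,\bI}$ for $k=1,\dotsc,t$ and $\bI\in E^{r_{i_k}}$. Pulling back the defining relations \eqref{Fl1}, \eqref{Fl2}, \eqref{Fl3} of $\Fl(\br',E)$ via this assignment yields relations of precisely the same form but reindexed by pairs $1\leq i_k\leq i_l\leq s$; these form a subcollection of the relations cutting out $\Fl(\br,E)$ and therefore already hold on $\Fl(\br,E)$. By the universal property of the closed immersion $\Fl(\br',E)\hookrightarrow\prod_{k=1}^t\P^{n^{r_{i_k}}-1}_\Funpm$, the composite $\widetilde\pi_\bi\circ\pl$ factors uniquely through $\Fl(\br',E)$, defining $\pi_\bi$.

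For the second assertion, I would apply \autoref{thm: moduli property}: the morphism $\chi_{\bM'}$ is characterized as the unique $\alpha:\Spec F\to\Fl(\br',E)$ satisfying $\alpha^\ast(\bM^\univ_{\br'})\cong\widetilde{\bM'}$, so it suffices to verify $(\pi_\bi\circ\chi_\bM)^\ast(\bM^\univ_{\br'})\cong\widetilde{\bM'}$. The key intermediate step is to identify $\pi_\bi^\ast(\bM^\univ_{\br'})$ with the subflag of $\bM^\univ_\br$ on $\Fl(\br,E)$ indexed by $\bi$. This should follow essentially by construction: the $k$-th component $\cL^\univ_{\br',k}$ of the universal line bundles is the pullback of $\cO(1)$ along the $k$-th coordinate projection from $\Fl(\br',E)$, and by the construction of $\pi_\bi$ above, this projection precomposed with $\pi_\bi$ coincides with the restriction of the $i_k$-th coordinate projection from $\Fl(\br,E)$, inducing a canonical isomorphism $\pi_\bi^\ast\cL^\univ_{\br',k}\cong\cL^\univ_{\br,i_k}$ that identifies the corresponding Grassmann-Pl\"ucker functions. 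Once this identification is in hand, functoriality of pullback together with $\chi_\bM^\ast(\bM^\univ_\br)\cong\widetilde\bM$ yields the desired equality. The only technical care required will be in tracking the canonical isomorphisms between pulled-back line bundles, but this should be routine given that every line bundle in sight is a pullback of $\cO(1)$ along coordinate projections.
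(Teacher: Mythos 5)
Your proposal is correct and matches the paper's argument in substance: the paper disposes of the proposition in one line by invoking the construction of $\Psi_X$ in the proof of \autoref{thm: moduli property} (where $\chi_\bM$ is assembled from the componentwise morphisms to the projective spaces, so composing with the coordinate projection visibly selects the subflag), while you verify the same fact through the inverse direction $\Phi_X$, identifying $\pi_\bi^\ast(\bM^\univ_{\br'})$ with the $\bi$-indexed subflag of $\bM^\univ_{\br}$ and using uniqueness of the characteristic morphism. Your explicit check that the pulled-back relations \eqref{Fl1}--\eqref{Fl3} form a subcollection of those cutting out $\Fl(\br,E)$, so that $\widetilde\pi_\bi\circ\pl$ factors through the closed subscheme $\Fl(\br',E)$, supplies the detail for the first assertion that the paper leaves implicit.
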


\begin{proof}
 This follows at once from the construction of $\Psi_X$ in the proof of \autoref{thm: moduli property} and the definition of $\chi_\bM$ as $\Psi_X\big(\widetilde\bM\big)$ where $\widetilde\bM$ is the matroid bundle over $X=\Spec F$ that corresponds to $\bM$. 
\end{proof}

\subsection{Duality}
\label{subsection: duality morphism}

The duality of flag $F$-matroids extends to a functorial dualization of flag matroid bundles, which is reflected by a canonical isomorphism of flag varieties.

Let $\Gr(r,E)=\Fl((r),E)$ be the Grassmannian of rank $r$ on $E=\{1,\dotsc,n\}$. Let $r^\ast=n-r$. By \cite[Thm.\ 5.6]{Baker-Lorscheid21}, there is a canonical isomorphism
\[
 \delta_{r,E}: \ \Gr(r,E) \ \longrightarrow \ \Gr(r^\ast,E)
\]
of $\Funpm$-schemes that is characterized by the pullback formula\footnote{Note that the definition in \cite{Baker-Lorscheid21} is off by a sign, which is corrected in our formula.}
\[
 \delta_{r,E}^\sharp(T_{x_1,\dotsc,x_{r^\ast}}) \ = \ \sign(x_1,\dotsc,x_{r^\ast},x_1',\dotsc,x_r') \cdot T_{x'_1,\dotsc,x'_r}
\]
for all $E=\{x_1,\dotsc,x_{r^\ast},x_1',\dotsc,x_r'\}$, which determines the images of all non-trivial homogeneous coordinates of $\Gr(r^\ast,E)$.

Let $F$ be a tract with involution $\tau$ and $M$ an $F$-matroid with characteristic morphism $\chi_M:\Spec F\to\Gr(r,E)$. Let $\tau^\ast$ be the involution of $\Spec F$ that corresponds to $\tau$. The characteristic morphism of the dual $F$-matroid $M^\ast$ (with respect to $\tau$) is
\[
 \chi_{M^\ast}: \ \Spec F \ \stackrel{\tau^\ast}\longrightarrow \ \Spec F  \ \stackrel{\chi_M}\longrightarrow \ \Gr(r,E)  \ \stackrel{\delta_{r,E}}\longrightarrow \ \Gr(r^\ast,E)
\]
by \cite[Prop.\ 5.8]{Baker-Lorscheid21}. 

We extend this result to flag matroids. Let $\br^\ast=(n-r_s,\dotsc,n-r_1)$. Note that the isomorphisms $\delta_{r_i,E}:\Gr(r_i,E)\to\Gr(r_i^\ast,E)$ determine an isomorphism
\[\textstyle
 \widehat\delta_{\br,E}: \ \prod_{i=1}^s \Gr(r_i,E) \ \longrightarrow \ \prod_{i=1}^s \Gr(r_i^\ast,E).
\]

\begin{thm}\label{thm: dual flag variety}
 The isomorphism $\widehat\delta_{\br,E}$ restricts to an isomorphism
 \[
  \delta_{\br,E}: \ \Fl(\br,E) \ \longrightarrow \ \Fl(\br^\ast,E).
 \]
 Let $F$ be an idyll with involution $\tau$ and $\bM$ a flag $F$-matroid with characteristic morphism $\chi_\bM:\Spec F\to\Fl(\br,E)$. Let $\tau^\ast$ be the involution of $\Spec F$ that corresponds to $\tau$. The characteristic morphism of the dual flag $F$-matroid $\bM^\ast$ (with respect to $\tau$) is
\[
 \chi_{\bM^\ast}: \ \Spec F \ \stackrel{\tau^\ast}\longrightarrow \ \Spec F  \ \stackrel{\chi_\bM\;}\longrightarrow \ \Fl(\br,E)  \ \stackrel{\delta_{\br,E}}\longrightarrow \ \Fl(\br^\ast,E).
\]
\end{thm}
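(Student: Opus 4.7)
The strategy is to show that $\widehat\delta_{\br,E}$ restricts to a morphism $\Fl(\br,E)\to\Fl(\br^\ast,E)$ by a direct verification in ordered blueprints, and then to deduce the formula for $\chi_{\bM^\ast}$ from its componentwise analogue \cite[Prop.\ 5.8]{Baker-Lorscheid21}.

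For the first step, the defining relations \eqref{Fl1} and \eqref{Fl2} are preserved under pullback by $\widehat\delta_{\br,E}^\sharp$ because $\widehat\delta_{\br,E}$ acts componentwise via the Grassmannian dualities $\delta_{r_i,E}$ of \cite[Thm.\ 5.6]{Baker-Lorscheid21}, each of which restricts to an isomorphism of Grassmannians. The crucial task is therefore to check that, for all $1\leq i\leq j\leq s$, the pullback of the Pl\"ucker flag relation \eqref{Fl3} of $\Fl(\br^\ast,E)$ at the reversed indices $(s+1-j,s+1-i)$ lies in the ideal defining $\Fl(\br,E)$. This is the algebraic shadow of the matroid-level implication $N\twoheadrightarrow M \Rightarrow M^\ast\twoheadrightarrow N^\ast$ from \autoref{prop-duality}. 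The sign-tracking computation at the heart of \autoref{thm-cryptomorphism} -- which converts a Pl\"ucker flag relation into an orthogonality statement between cocircuits and covectors -- is purely symbolic: each individual equality holds term-by-term in any commutative ordered blueprint. Transporting this calculation to the polynomial ring $\Funpm[T_{i,\bI}/T_{i,\bJ_i}]$ yields the required identity of polynomial expressions. Applying the symmetric construction to $\widehat\delta_{\br^\ast,E}$ produces an inverse, so $\delta_{\br,E}$ is an isomorphism.

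For the second step, the characteristic morphism factors compatibly with the product embedding $\Fl(\br,E)\hookrightarrow\prod\Gr(r_i,E)$: its $i$-th coordinate projection recovers the characteristic morphism of $M_i$ by \autoref{prop: coordinate projections}. For $\bM^\ast=(M_s^\ast,\dotsc,M_1^\ast)$, the $i$-th component is $\chi_{M_{s+1-i}^\ast}=\delta_{r_{s+1-i},E}\circ\chi_{M_{s+1-i}}\circ\tau^\ast$ by \cite[Prop.\ 5.8]{Baker-Lorscheid21}, and assembling these componentwise identities gives the formula $\chi_{\bM^\ast}=\delta_{\br,E}\circ\chi_\bM\circ\tau^\ast$.

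I expect the main obstacle to be the polynomial identity underlying the first step: each Pl\"ucker flag relation involves $r_j+1$ summands with intricate sign and multi-index structure, and keeping track of the signs arising from the formula $\delta_{r,E}^\sharp(T_{x_1,\dotsc,x_{r^\ast}})=\sign(x_1,\dotsc,x_{r^\ast},x_1',\dotsc,x_r')\cdot T_{x'_1,\dotsc,x'_r}$ while simultaneously matching dual multi-indices with complementary subsets of the original ones requires bookkeeping more elaborate than in the proofs of \autoref{prop-duality} and \autoref{thm-cryptomorphism}.
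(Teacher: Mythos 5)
Your proposal is correct, and on the first claim it follows the paper's own route: the only relations that need checking are the genuinely flag-type relations \eqref{Fl3} with $i<j$, and the verification reduces to the observation that the signed-complementation pullback carries each nonzero term of a Pl\"ucker flag relation to the corresponding term of the dual one, with the signs combining into a \emph{common} sign independent of the summation index -- exactly the bookkeeping of \autoref{thm-cryptomorphism} and of \cite[\S 5.5]{Baker-Lorscheid21}, which the paper likewise only sketches, so your deferral of this computation is no worse than the published argument (though be aware that only the term-by-term sign identity ``transports''; the cryptomorphism proof itself manipulates units of $\N[F^\times]$ and cancels scalars, which has no literal analogue in the coordinate algebra where the $T_{i,\bI}$ are not units). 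Where you genuinely diverge is the second claim: the paper computes the pullback of the universal flag matroid bundle along $\delta_{\br,E}\circ\chi_\bM\circ\tau^\ast$ directly in terms of Grassmann-Pl\"ucker functions (with an extra sign $\sigma_{\bI^\ast}$) and then invokes the uniqueness of characteristic morphisms from \autoref{cor: flag matroids as rational points}, whereas you assemble the formula componentwise from \cite[Prop.\ 5.8]{Baker-Lorscheid21} using \autoref{prop: coordinate projections} and the fact that the Pl\"ucker embedding into $\prod\Gr(r_i^\ast,E)$ is a monomorphism, so that agreement of all coordinate projections forces agreement of the morphisms. Your route buys a cleaner reduction to the known Grassmannian statement and avoids redoing any sign analysis in the second half (you should just make the monomorphism step and the index reversal $i\mapsto s+1-i$ explicit); the paper's route keeps everything inside the moduli formalism of \autoref{thm: moduli property} and does not need the product embedding beyond its existence. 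Both are valid proofs of the statement.
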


\begin{proof}
 In order to show that $\widehat\delta_{\br,E}$ restricts to an isomorphism $\delta_{\br,E}: \Fl(\br,E) \to \Fl(\br^\ast,E)$, it suffices to verify that the Pl\"ucker flag relations are preserved. Consider the relation
 \[
  0 \ \ \leq \ \  \sum_{k=1}^{r_j+1} \quad \epsilon^k \, \cdot \, T_{i,(y_k,\, x_1,\dotsc,\, x_{r_i-1})} \, \cdot \, T_{j,(y_1,\dotsc,\, \widehat{y_k},\dotsc ,\, y_{r_j+1})}
 \]
 in $\Gamma\big(\Fl(\br,E),\cL_i^\univ\otimes\cL_j^\univ\big)$ for $1\leq i<j\leq s$ and $x_1,\dotsc x_{r_i-1},y_1,\dotsc,y_{r_j+1}\in E$. Since $T_\bI=0$ in $\Gamma\big(\Fl(\br,E),\cL_i^\univ\big)$ if the entries of $\bI$ are not pairwise distinct, we can assume that the $x_1,\dotsc,x_{r_i-1}$ are pairwise distinct and the same for $y_1,\dotsc,y_{r_j+1}$. In other words, 
  \[
  E \ = \ \big\{x_1,\dotsc,x_{r_i-1},x_1',\dotsc,x'_{r^\ast_{i}+1}\big\} \ = \ \big\{y_1,\dotsc,y_{r_j+1},y_1',\dotsc,y'_{r^\ast_{j}-1}\big\}
 \]
 for some $x'_k$ and $y'_l$. Applying $\delta_{\br,E}^\sharp$ to both sides of the Pl\"ucker flag relation in question yields
 \[
  0 \ \ \leq \ \  \sum_{k=1}^{r_{i}^\ast+1} \quad \epsilon^k \, \cdot \, T_{j,(x'_k,\, y'_1,\dotsc,\, y'_{r^\ast_{j}-1})} \, \cdot \, T_{i,(x'_1,\dotsc,\, \widehat{x'_k},\dotsc ,\, x'_{r^\ast_{i}+1})},
 \]
 up to a common sign change, which depends on an ordering of $E$; see \cite[section 5.5]{Baker-Lorscheid21} for details on the definition of $\delta_{\br,E}$. This is precisely the Pl\"ucker flag relation of $\Fl(\br^\ast,E)$ for the given indices. This shows that the image of $\delta_{\br,E}$ is contained in $\Fl(\br^\ast,E)$. By the symmetry of the argument in $\Fl(\br,E)$ and $\Fl(\br^\ast,E)$, we conclude that $\delta_{\br,E}$ is an isomorphism.

 The characteristic morphism $\chi_\bM:\Spec F\to\Fl(\br,E)$ determines a flag of Grassmann-Pl\"ucker functions $(\varphi_i:E^{r_i}\to F)$ that represents the flag $F$-matroid $\bM$ in terms of $\varphi_i(\bI)=\chi_\bM^\sharp(T_\bI)$ where we identify $\Gamma\big(\Spec F,\chi_\bM^\ast(\cL_i^\univ)\big)=F$.  Let $(\varphi^\ast_{i}:E^{r^\ast_{i}}\to F)$ be a flag of Grassmann-Pl\"ucker functions that represents $\bM^\ast$, which is uniquely determined up to a common sign change that depends on the ordering of $E$; see \cite[section 5.5]{Baker-Lorscheid21}. Thus we have for $\bI=(x_1,\dotsc,x_{r_i})$ and $\bI^\ast=(x'_1,\dotsc,x'_{r^\ast_{i}})$ with $|\bI|\cup|\bI^\ast|=E$ that
 \[
  \sigma_{\bI^\ast}\varphi^*_{i}(\bI^\ast) \ = \ \tau\big(\varphi_i(\bI)\big) \ = \ \tau\big(\chi_M^\sharp(T_\bI)) \ = \ \tau\big(\chi_{M}^\sharp\big(\delta_{\br,E}^\sharp(\sigma_{\bI^\ast} T_{\bI^\ast})\big)\big)
 \]
 where $\sigma_{\bI^\ast}\in\{1,\epsilon\}$ is a sign that depends on the induced ordering of $|\bI^\ast|\subset E$. After taking classes this yields $\chi_{\bM^\ast}^\ast(\bM^\univ)=\bM^\ast=(\delta_{\br,E}\circ\chi_\bM\circ\tau^\ast)^\ast(\bM^\univ)$. By \autoref{cor: flag matroids as rational points}, the characteristic morphism is uniquely determined by the pullback of the universal flag matroid bundle and thus $\chi_{\bM^\ast}=\delta_{\br,E}\circ\chi_\bM\circ\tau^\ast$, which proves the latter claim of the theorem.
\end{proof}

\subsection{Minors}
\label{subsection: minors of the moduli space}

Let $E'=E-\{e\}$ for some $e\in E$ and $0\leq r\leq n$. Let $V_{r,/e}$ and $V_{r,\backslash e}$ be the closed subschemes of $\Gr(r,E)$ that are defined by the relations
\[
 T_\bI \ = \ 0 \quad \text{if $e\in|\bI|$} \qquad \text{and} \qquad T_\bI \ = \ 0 \quad \text{if $e\notin|\bI|$},
\]
respectively. Let $U_{r,/e}$ and $U_{r,\backslash e}$ be the open subschemes of $\Gr(r,E)$ whose underlying sets are the respective complements of $V_{r,/e}$ and $V_{r,\backslash e}$. As explained in \cite[section 5.6]{Baker-Lorscheid21}, we have natural morphisms
\[
 \Gr(r,E) \ \stackrel{\iota_{r,/e}}\longleftarrow \ U_{r,/e} \amalg V_{r,/e} \ \stackrel{\Psi_{r,/e}}\longrightarrow \ \Gr(r-1,E') \amalg \Gr(r,E')
\]
where $\Psi_{r,/e}$ is given by sending a homogeneous coordinate $T_\bI$ to $T_{\bI e}$ if $e\notin|\bI|$ (which yields $U_{r,/e}\to\Gr(r-1,E')$) and to $T_{\bI}$ if $e\in|\bI|$ (which yields $V_{r,/e}\to\Gr(r,E')$). Similarly, we have natural morphisms
\[
 \Gr(r,E) \ \stackrel{\iota_{r,\backslash e}}\longleftarrow \ U_{r,\backslash e} \amalg V_{r,\backslash e} \ \stackrel{\Psi_{r,\backslash e}}\longrightarrow \ \Gr(r,E') \amalg \Gr(r-1,E').
\]
For $k=0,\dotsc,s$, let $W_{\br,k,/e}$ be the intersection, or fiber product over $\prod\Gr(r_i,E)$, of $\Fl(\br,E)$ with
\[
 U_{r_1,/e} \ \times \ \dotsb \ \times \ U_{r_k,/e} \ \times \ V_{r_{k+1},/e}  \ \times \ \dotsb \ \times \ V_{r_s,/e}
\]
and let $W_{\br,k,\backslash e}$ be the intersection of $\Fl(\br,E)$ with
\[
 V_{r_1,\backslash e} \ \times \ \dotsb \ \times \ V_{r_k,\backslash e} \ \times \ U_{r_{k+1},\backslash e}  \ \times \ \dotsb \ \times \ U_{r_s,\backslash e},
\]
which are locally closed subschemes of $\Fl(\br,E)$. Let
\[\textstyle
 \iota_{\br,/e}: \coprod_{k=0}^s W_{\br,k,/e} \ \longrightarrow \ \Fl(\br,E) \quad \text{and} \quad \iota_{\br,\backslash e}: \coprod_{k=0}^s W_{\br,k,\backslash e} \ \longrightarrow \ \Fl(\br,E) 
\]
be the respective disjoint unions of the embeddings as subschemes. For $0\leq k\leq s$, let
\[
 \br_k \ = \ (r_1-1,\dotsc,r_k-1,\; r_{k+1},\dotsc,r_s).
\]
The morphisms $\Psi_{r_i,/e}$ and $\Psi_{r_i,\backslash e}$ induce the respective morphisms
\[
 \Psi_{\br,k,/e}: \ W_{\br,k,/e} \ \longrightarrow \ \Fl(\br_k,E) \qquad \text{and} \qquad \Psi_{\br,k,\backslash e}: \ W_{\br,k,\backslash e} \ \longrightarrow \ \Fl(\br_k,E).
\]

\begin{thm}\label{thm: minors of flag varieties}
 Let $F$ be an idyll. Then the induced maps
 \[\textstyle
   \coprod_{k=0}^s W_{\br,k,/e}(F) \ \longrightarrow \ \Fl(\br,E)(F) \qquad \text{and} \qquad \coprod_{k=0}^s W_{\br,k,\backslash e}(F) \ \longrightarrow \ \Fl(\br,E)(F) 
 \]
 of $F$-rational point sets are bijections. Let $\bM$ be a flag $F$-matroid of rank $\br$ on $E$ with characteristic morphism $\chi_\bM\in\Fl(\br,E)(F)$. Let $k_{/e}$ and $k_{\backslash e}$ be the unique indices for which $\chi_\bM\in W_{\br,k_{/e},/e}(F)$ and $\chi_\bM\in W_{\br,k_{\backslash e},\backslash e}(F)$. Then the characteristic functions of $\bM/e$ and $\bM\backslash e$ are
 \[
  \chi_{\bM/e}: \ \Spec F \ \stackrel{\chi_{\bM}}\longrightarrow \ W_{\br,k_{/e},/e} \ \stackrel{\Psi_{\br,k_{/e},/e}}\longrightarrow \ \Fl(\br_{k_{/e}},E')
 \]
 and
 \[
  \chi_{\bM\backslash e}: \ \Spec F \ \stackrel{\chi_{\bM}}\longrightarrow \ W_{\br,k_{\backslash e},\backslash e} \ \stackrel{\Psi_{\br,k_{\backslash e},\backslash e}}\longrightarrow \ \Fl(\br_{k_{\backslash e}},E'),
 \]
 respectively.
\end{thm}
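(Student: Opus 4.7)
The plan is to reduce the theorem to its single-Grassmannian counterpart from \cite[\S 5.6]{Baker-Lorscheid21} by working componentwise in the flag structure, and to deduce the deletion case from the contraction case using the duality of \autoref{thm: dual flag variety}.

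First I would treat the contraction case. By \cite[\S 5.6]{Baker-Lorscheid21}, each map $\iota_{r_i,/e}\colon U_{r_i,/e}\amalg V_{r_i,/e}\to\Gr(r_i,E)$ is a bijection on $F$-rational points that records whether $e$ is a non-loop or a loop of the corresponding $F$-matroid. Taking the product over $i$ and intersecting with $\Fl(\br,E)(F)$ stratifies the latter into at most $2^s$ locally closed pieces indexed by functions $\{1,\dotsc,s\}\to\{U,V\}$. The central step is to show that only the $s+1$ threshold patterns are non-empty, and that these are precisely the $W_{\br,k,/e}$. This uses the flag constraint: combining \autoref{prop-minor}, \autoref{prop: common flag matroids} and \cite[Lemma 8.1.7]{Kung86} (or invoking \autoref{prop: functoriality} to push forward to $\K$), the loops of the underlying matroids $\underline{M_1},\dotsc,\underline{M_s}$ form a nested chain, so the set of indices $i$ for which $e$ is a loop of $M_i$ is an end-segment of $\{1,\dotsc,s\}$. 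This forces the $U/V$ pattern to be of threshold type, which yields the asserted bijection $\coprod_k W_{\br,k,/e}(F)\xrightarrow{\sim}\Fl(\br,E)(F)$ and simultaneously establishes the uniqueness of $k_{/e}$.

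Next I would identify the characteristic morphism of $\bM/e$ with $\Psi_{\br,k_{/e},/e}\circ\chi_\bM$. By \autoref{thm: moduli property}, it suffices to check that the pullback of the universal flag matroid bundle on $\Fl(\br_{k_{/e}},E')$ along this composition equals $\widetilde{\bM/e}$. Componentwise, this is exactly the content of \cite[\S 5.6]{Baker-Lorscheid21} for each Grassmannian factor $\Gr(r_i,E)$. That the assembled componentwise morphisms actually land in $\Fl(\br_{k_{/e}},E')$, and not merely in a product of Grassmannians, is immediate from \autoref{thm: minors}: the sequence $\bM/e$ is itself a flag $F$-matroid, hence its flag of Grassmann-Pl\"ucker functions satisfies the Pl\"ucker flag relations defining the target flag variety.

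Finally, the deletion case follows from the contraction case by duality. Under the isomorphism $\delta_{\br,E}\colon\Fl(\br,E)\to\Fl(\br^\ast,E)$ from \autoref{thm: dual flag variety}, composed with the duality involution on $F$, the subschemes $W_{\br,k,\backslash e}$ correspond to subschemes of $\Fl(\br^\ast,E)$ of the form $W_{\br^\ast,k',/e}$ for an appropriate index $k'$, and the deletion morphisms $\Psi_{\br,k,\backslash e}$ are intertwined with the contraction morphisms on the dual flag variety. Combined with the identity $M\backslash e=(M^\ast/e)^\ast$ from \autoref{lemma-minors}, extended componentwise to flag $F$-matroids, this reduces the deletion assertion to the already-established contraction assertion. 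The main obstacle I anticipate is the careful bookkeeping in this last step of the index reordering ($\br$ versus $\br^\ast$), the sign changes coming from the duality involution, and the identification of $W_{\br,k,\backslash e}$ with its counterpart on the dual side, which should nevertheless yield to a direct verification using the explicit pullback formula for $\delta_{r_i,E}$ recorded in \cite[\S 5.5]{Baker-Lorscheid21}.
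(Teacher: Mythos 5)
Your route is, for the contraction half, exactly the paper's: stratify $\Fl(\br,E)(F)$ by the loop/non-loop behaviour of $e$ in each constituent and identify the characteristic morphisms componentwise via the coordinate projections (\autoref{prop: coordinate projections}, \autoref{thm: moduli property}) and the Grassmannian case from Baker--Lorscheid; your only deviation is deducing the deletion half from the contraction half through \autoref{thm: dual flag variety}, where the paper simply calls it analogous. That duality detour is viable in principle, though note that dualization reverses the order of the constituents, so the stratum index is sent to $s-k$ rather than $k$, which is part of the bookkeeping you anticipate.

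There is, however, a genuine problem with your pivotal combinatorial step. You claim that the indices $i$ for which $e$ is a loop of $M_i$ form an end-segment of $\{1,\dotsc,s\}$; the nesting of loops goes the other way. For a quotient $N\twoheadrightarrow M$ every flat of $M$ is a flat of $N$, so the set of loops of $M$ (the minimal flat of $M$) is a flat of $N$ and hence contains the minimal flat of $N$; thus loops of $N$ are loops of $M$, not conversely. This is precisely how \autoref{prop-minor} uses Recski's lemma (Case 1), and its Case 3, as well as the example following \autoref{thm: minors} with $\underline{M_1}=U_{1,1}\oplus U_{0,1}$, $\underline{M_2}=U_{2,2}$ and $e=2$, exhibit flag matroids in which $e$ is a loop of $M_1$ but not of $M_2$. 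Consequently the loop indices form an initial segment, the realizable patterns are of the form $V_{r_1,/e}\times\dotsb\times V_{r_m,/e}\times U_{r_{m+1},/e}\times\dotsb\times U_{r_s,/e}$ rather than the $U$-first patterns, and the flag matroid of that example lies in none of the strata $W_{\br,k,/e}$ as defined; your surjectivity argument (and with it the bijection, and the rank vector $\br_k$) only goes through after the roles of the $U$- and $V$-factors are interchanged. You are in good company, since the paper's own proof asserts the same reversed implication (``if $e$ is a loop for $M_i$ and $i<j$, then $e$ is a loop for $M_j$''), but the nested chain of loops you invoke actually yields the opposite conclusion, so this step needs to be corrected rather than reproduced.
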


\begin{proof}
 In this proof, we will only derive the claims about contracting $e$ and omit a proof of the claims about excluding $e$, which is analogous. 
 
 Let $\bM=(M_1,\dotsc,M_s)$ a flag $F$-matroid with characteristic morphism $\chi_\bM:\Spec F\to\Fl(\br,E)$. Then $\chi_\bM$ factors through $W_{\br,k,/e}$ if and only if $e$ is not a loop for $M_1,\dotsc,M_k$, but it is a loop for $M_{k+1},\dotsc,M_s$. Since these conditions are mutually exclusive for distinct $k$ and since the factorization of $\chi_\bM$ through $W_{\br,k,/e}$ is unique by the nature of locally closed subschemes, we conclude that the map $\coprod W_{\br,k,/e}(F)\to\Fl(\br,E)(F)$ is injective.
 
 It is surjective for the following reason. If $e$ is a loop for $M_i$ and $i<j$, then $e$ is also a loop for $M_j$; see \cite[Lemma 1]{Recski05}. Thus there is a $k$ such that $e$ is not a loop for $M_1,\dotsc,M_k$, but it is a loop for $M_{k+1},\dotsc,M_s$, which means that $\chi_\bM$ factors through $W_{\br,k,/e}$ and shows that $\coprod W_{\br,k,/e}(F)\to\Fl(\br,E)(F)$ is surjective.

 The latter claim of the theorem can be deduced as follows. The characteristic morphism $\chi_\bM$ of $\bM$ is determined by its compositions with the coordinate projections $\pi_i:\Fl(\br,E)\to\Gr(r_i,E)$ for $i=1,\dotsc,s$. By \autoref{prop: coordinate projections}, $\pi_i\circ\chi_\bM:\Spec F\to\Gr(r_i,E)$ is the characteristic morphism of the $F$-matroid $M_i$. Note that $\pi_i(W_{\br,k,/e})$ equals $U_{r_i,/e}$ for $i\leq k$ and $V_{r_i,/e}$ for $i>k$. A comparison with \cite[Thm.\ 5.9]{Baker-Lorscheid21} yields that $\pi_i\circ\Psi_{\br,k_{/e},/e}\circ\chi_\bM$ is the characteristic morphism of the $F$-matroid $M_i/e$. By the definition of $\bM/e$ as $(M_1/e,\dotsc,M_s/e)$, we see that $\Psi_{\br,k_{/e},/e}\circ\chi_\bM$ is the characteristic morphism of $\bM/e$, as claimed.
\end{proof}

\begin{rem}
 The compatibility of minors of $F$-flag matroids with duality extends to geometry in the sense that the duality $\delta_{\br,E}:\Fl(\br,E)\to\Fl(\br^\ast,E)$ restricts to an isomorphism $\delta\vert_{W_{\br,k,/e}}:W_{\br,k,/e}\to  W_{\br^\ast,k,\backslash e}$ and that the diagram
 \[
  \beginpgfgraphicnamed{tikz/fig1}
   \begin{tikzcd}[column sep=2.5cm]
    \Fl(\br,E) \ar[d,"\delta_{\br,E}"',"\sim"] & W_{\br,k,/e} \ar[l,"\iota_{\br,/e}"'] \ar[r,"\Psi_{\br,k,/e}"] \ar[d,"\delta\vert_{W_{\br,k,/e}}","\sim"'] & \Fl(\br_k,E')  \ar[d,"\delta_{\br_k,E'}","\sim"'] \\
    \Fl(\br^\ast,E) & W_{\br^\ast,k,\backslash e} \ar[l,"\iota_{\br,\backslash e}^\ast"'] \ar[r,"\Psi_{\br,k,\backslash e}^\ast"] & \Fl(\br_k^\ast,E')
   \end{tikzcd}
  \endpgfgraphicnamed
 \]
 is commutative for all $k=0,\dotsc,s$. This can be deduced from the corresponding fact for (usual) matroid bundles (see \cite[Thm.\ 5.9]{Baker-Lorscheid21}), but for the sake of a compact presentation we omit the details.
\end{rem}

\subsection{Flags of minors}
\label{subsection: flags of minors morphism}

Let $n'=n+r_s-r_1$ and $E'=\{1,\dotsc,n'\}$. For $i=1,\dotsc,s$, let $J_i=\{n+1,\dotsc,n+r_i-r_1\}$ and $I_i=\{n+r_i-r_1+1,\dotsc,n'\}$. Recall from \autoref{thm: flags of minors} that an $F$-matroid $M'$ of rank $r_s$ on $E'$ gives rise to the flag $F$-matroid $(M'\minor{I_1}{J_1},\dotsc,M'\minor{I_s}{J_s})$ of rank $\br$ on $E$. 

This association extends to a rational map $\mu_{\br,E}:\Gr(r_s,E')\DashedArrow[->]\Fl(\br,E)$, which we describe in terms of the images of the multi-homogeneous coordinates of $\Fl(\br,E)$. Namely, for $e_1,\dotsc,e_{r_i}\in E$, the coordinate $T_{e_1,\dotsc,e_{r_i}}$ of the $i$-th factor in $\prod_{i=1}^s\P_\Funpm^{n^{r_i}-1}$ is mapped to $T_{e_1,\dotsc,e_{r_i},n+1,\dotsc,n+r_s-r_i}$.

The domain of the rational map $\mu_{\br,E}:\Gr(r_s,E')\DashedArrow[->]\Fl(\br,E)$ is as follows. For $I=\{e_1,\dotsc,e_{r_s}\}\subset E'$, we define $V_I$ as the intersection of $\Gr(r',E')$ with the canonical open $U_{x_1,\dotsc,x_{r_s}}$ of $\P^{(n')^{r_s}-1}$. Note that $V_I$ does not depend on the order of $x_1,\dotsc,x_{r_s}$ and that $V_I=\emptyset$ if $\# I<r_s$ due to the defining equations of $\Gr(r_s,E')$. Let $\binom{E'}{r_s}$ be the collection of $r_s$-subsets of $E'$,
\[\textstyle
 \Omega \ = \ \big\{ I \in \binom{E'}{r_s} \, \big| \, n+1,\dotsc,n'\in I \big\} \quad \text{and} \quad \Omega^c \ = \ \big\{ I \in \binom{E'}{r_s} \, \big| \, n+1,\dotsc,n'\notin I \big\}
\]
The domain of $\Gr(r_s,E')\DashedArrow[->]\Fl(\br,E)$ is
\[
 W_{\br,E} \ = \ \bigcup_{I\in\Omega,\, J\in\Omega^c} V_I\cap V_J,
\]
which is the locus of all matroids $M'$ for which $\{n+1,\dotsc,n'\}$ is independent and co-independent. In other words, we can consider $\mu_{\br,E}$ as morphism $\mu_{\br,E}:W_{\br,E}\to\Fl(\br,E)$. We omit further details for the sake of a compact presentation and proceed with the central statement about this morphism (without proof).

\begin{thm}\label{thm: flags of minors morphism}
 Let $F$ be a tract and $M'$ be an $F$-matroid of rank $r_s$ on $E'$ with characteristic morphism $\chi_{M'}:\Spec F\to\Gr(r_s,E')$ whose image we assume to be in $W_{\br,E}$. Let $\bM=(M_1,\dotsc,M_s)$ be the flag $F$-matroid of rank $\br$ on $E$ with $M_i=M'\minor{I_i}{J_i}$ and $\chi_\bM:\Spec F\to \Fl(\br, E)$ its characteristic morphism. Then $\chi_\bM=\mu_{\br,E}\circ\chi_{M'}$.
\end{thm}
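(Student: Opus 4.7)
The plan is to invoke the moduli property of \autoref{thm: moduli property}, which represents morphisms $\alpha: \Spec F \to \Fl(\br, E)$ bijectively by their pullbacks $\alpha^\ast(\bM^\univ)$. Since the characteristic morphism $\chi_\bM$ is, by definition, the unique morphism with $\chi_\bM^\ast(\bM^\univ) = \widetilde\bM$, it suffices to show $(\mu_{\br,E} \circ \chi_{M'})^\ast(\bM^\univ) = \widetilde\bM$ as flag matroid bundles on $\Spec F$.

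To this end, fix a Grassmann--Pl\"ucker function $\varphi_{M'}: (E')^{r_s} \to F$ representing $M'$. By the construction of $\chi_{M'}$ in the proof of \autoref{thm: moduli property} together with \autoref{rem: flag matroids as flag matroid bundles}, a suitable trivialization of $\chi_{M'}^\ast\cO(1)$ identifies $\chi_{M'}^\sharp(T_{\bI})$ with $\varphi_{M'}(\bI)$ for $\bI \in (E')^{r_s}$. The definition of $\mu_{\br,E}$ sends the $i$-th Pl\"ucker coordinate $T_{i, e_1,\dotsc,e_{r_i}}$ of $\Fl(\br,E)$ to $T_{e_1,\dotsc,e_{r_i},n+1,\dotsc,n+r_s-r_i}$ on $\Gr(r_s,E')$, so composing pullbacks, the bundle $(\mu_{\br,E} \circ \chi_{M'})^\ast(\bM^\univ)$ is represented by the flag of functions
\[
 \varphi_i(e_1,\dotsc,e_{r_i}) \ = \ \varphi_{M'}(e_1,\dotsc,e_{r_i},\, n+1,\dotsc,n+r_s-r_i), \qquad i=1,\dotsc,s.
\]
Since $\chi_{M'}$ factors through $W_{\br,E}$, the subset $\{n+1,\dotsc,n'\}$ is both independent and co-independent in $\underline{M'}$. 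In particular, $J_i=\{n+1,\dotsc,n+r_s-r_i\}$ is independent and $I_i=\{n+r_s-r_i+1,\dotsc,n'\}$ is co-independent, so \autoref{lemma-minors} identifies $\varphi_i$ with a Grassmann--Pl\"ucker function of the minor $M_i = M'\minor{I_i}{J_i}$. Hence $(\mu_{\br,E} \circ \chi_{M'})^\ast(\bM^\univ) = \widetilde\bM$, as desired.

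One preliminary step, omitted in the paper, is to verify that $\mu_{\br,E}$ is a well-defined morphism $W_{\br,E} \to \Fl(\br,E)$, i.e., that its prescription on Pl\"ucker coordinates respects the relations \eqref{Fl1}, \eqref{Fl2} and \eqref{Fl3}. The first two transfer directly from the analogous alternating relations for $\Gr(r_s,E')$. The main technical obstacle is \eqref{Fl3}: each Pl\"ucker flag relation for indices $1 \leq i \leq j \leq s$ should arise as a specialization of a three-term Pl\"ucker relation on $\Gr(r_s,E')$ with fixed indices $(x_1,\dotsc,x_{r_i-1},\, n+1,\dotsc,n+r_s-r_i)$ and varying indices $(y_1,\dotsc,y_{r_j+1},\, n+1,\dotsc,n+r_s-r_j)$; since $r_s - r_j \leq r_s - r_i$, the extra summands for $k > r_j+1$ each contain a Pl\"ucker coordinate with a repeated entry from $\{n+1,\dotsc,n+r_s-r_i\}$ and thus vanish by the alternating property. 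With well-definedness in hand, the pullback computation above closes the argument.
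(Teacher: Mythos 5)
The paper states \autoref{thm: flags of minors morphism} explicitly \emph{without proof} (``We omit further details for the sake of a compact presentation and proceed with the central statement about this morphism (without proof)''), so there is nothing to compare your argument against; on its own merits, your proof is correct and is surely the intended one. The reduction via \autoref{thm: moduli property} to the identity $(\mu_{\br,E}\circ\chi_{M'})^\ast(\bM^\univ)=\widetilde\bM$ is exactly right, and the pullback computation works because, with $J_i$ independent and $I_i$ co-independent, the explicit formulas for deletion and contraction give $(\varphi_{M'}\backslash I_i/J_i)(e_1,\dotsc,e_{r_i})=\varphi_{M'}(e_1,\dotsc,e_{r_i},n+1,\dotsc,n+r_s-r_i)$ on the nose (deletion of the co-independent $I_i$ appends nothing, contraction of the independent $J_i$ appends $J_i$), so your $\varphi_i$ represents $M_i$.

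Your treatment of well-definedness is also sound, and is the one substantive point the paper glosses over. One terminological slip: the relations you specialize on $\Gr(r_s,E')$ are the full $(r_s+1)$-term Pl\"ucker relations \ref{GP3}, not ``three-term'' relations; but the mechanism you describe is correct --- taking $u=(x_1,\dotsc,x_{r_i-1},n+1,\dotsc,n+r_s-r_i)$ and $w=(y_1,\dotsc,y_{r_j+1},n+1,\dotsc,n+r_s-r_j)$, the summands with $k>r_j+1$ have $w_k\in\{n+1,\dotsc,n+r_s-r_j\}\subseteq\{n+1,\dotsc,n+r_s-r_i\}$, so $T_{(w_k,u)}$ has a repeated index and vanishes, and the remaining terms are precisely $\mu_{\br,E}^\sharp$ applied to \eqref{Fl3}.
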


\subsection{Rational point sets}
\label{subsection: rational point sets}

A \emph{topological idyll} is an idyll $F$ together with a topology such that the multiplication $F\times F\to F$ and the inversion $F^\times\to F^\times$ are continuous maps and such that $\{0\}$ is closed. Examples of topological idylls are topological fields (considered as idylls with the same topology), the tropical hyperfield with the real topology, the Krasner hyperfield $\K$ with the topology generated by the open subset $\{1\}$ and the sign hyperfield $\S$ with the topology generated by the open subsets $\{1\}$ and $\{-1\}$.

The sets of $F$-rational points $X(F)$ for an $\Funpm$-scheme $X$ and a topological idyll $F$ come with the \emph{fine topology on $X(F)$}, which is characterized by the following properties:
\begin{enumerate}
 \item for affine $X=\Spec B$, the fine topology $X(F)=\Hom(B,F)$ carries the compact-open topology with respect to the discrete topology for $B$;
 \item the canonical bijection $F\to \A_F^1(F)$ is a homeomorphism;
 \item the canonical bijection $(X\times Y)(F)\to X(F)\times Y(F)$ is a homeomorphism;
 \item for every morphism $Y \to X$, the canonical map $Y(F)\to X(F)$ is continuous;
 \item for every open / closed immersion $Y\to X$, the canonical inclusion $Y(F)\to X(F)$ is an open / closed topological embedding;
 \item for every covering of $X$ by ordered blue open subschemes $U_i$, a subset $W$ of $X(F)$ is open if and only if $W\cap U_i(F)$ is open in $U_i(F)$ for every $i$;
 \item for every continuous morphism $F\to F'$, the induced map $X(F)\to X(F')$ is continuous.
\end{enumerate}
For more details, see \cite[section 6]{Lorscheid15} and \cite[section 5.10]{Baker-Lorscheid21}.

In the following, we briefly discuss the topological spaces that arise from the topological idylls that we mention above.

\subsubsection{Topological fields}
Let $K$ be a topological field. Then $\Fl(\br,E)(K)$ is canonically homeomorphic to the set of $K$-rational points of the usual flag variety of type $\br$ flags of subspaces of $K^E$, equipped with the strong topology stemming from $K$.

As a particular example, we consider $\R$ in its incarnation as an idyll. Then $\Fl(\br,E)(\R)$ is canonically homeomorphic to the real flag manifold of type $\br$ flags of linear subspaces of $\R^E$.

\subsubsection{The tropical hyperfield}\label{subsubsection: tropical points of the flag variety}
The space $\Fl(\br,E)(\T)$ of tropical points is canonically homeomorphic to the flag Dressian $FlDr(r_1,\dotsc,r_s)$ of Brandt, Eur and Zhang's paper \cite{Brandt-Eur-Zhang21}. They are naturally tropical subvarieties of a product $\prod\P^{n^{r_i}-1}(\T)$ of projective tropical spaces (in their incarnations as tropical varieties). We regain several insights of \cite{Brandt-Eur-Zhang21} from our point of view.

By \autoref{cor: flag matroids as rational points}, a point of $\Fl(\br,E)(\T)$ corresponds to a flag $\T$-matroid. Since the covectors of a $\T$-matroid, or valuated matroid, form a tropical linear space, a flag $\T$-matroid corresponds to a flag of tropical linear spaces in $\T^E$ by \autoref{thm-cryptomorphism}. In conclusion, $\Fl(\br,E)(\T)$ corresponds bijectively to rank $\br$-flags of tropical linear spaces in $\T^E$.

The fine topology of $\Fl(\br,E)(\T)$ corresponds to the variation of flags of tropical linear space in terms of open neighbourhoods in $\T^E$ of a given flag or, equivalently, to a variation of the coefficients of the defining equations of the tropical linear spaces within an open interval.

We alert the reader that tropical Grassmannians are defined as the tropicalizations of classical Grassmannians in tropical geometry, which are in general proper subvarieties of Dressians. This terminological artifact leads to a slight inconsistency in the notation $\Gr(r,E)$ for the (underlying scheme of the) Dressian. Still, the generalization of tropical Grassmannians to flag varieties can also be recovered from our viewpoint: let $K$ be a field with a non-archimedean absolute value $v:K\to\R_{\geq0}$, which can be seen as an idyll morphism $v:K\to\T$, see \cite[Thm.\ 2.2]{Lorscheid19}. The tropical flag variety of type $\br$ on $E$ stemming from $K$ is the image of the map $v_\ast:\Fl(\br,E)(K)\to\Fl(\br,E)(\T)$.

\subsubsection{The Krasner hyperfield}

The natural topology for $\K$ is the poset topology with $0<1$, which turns $\K$ into the terminal object in the category of topological tracts. In consequence, the finite set $\Fl(\br,E)(\K)$ inherits the topology of a poset, which equals the subspace topology of $\Fl(\br,E)(\K)$ considered as a subset of the underlying topological space of $\Fl(\br,E)$ together with the Zariski topology. In particular, the closed points of $\Fl(\br,E)(\K)$ correspond to all flag matroids $(M_1,\dotsc,M_s)$ of rank $\br$ on $E$ for which the matroids $M_1,\dotsc,M_s$ have each exactly one basis.

The poset $\Fl(\br,E)(\K)$ has a unique maximal element $\hat 1$, or generic point, which is the rank $\br$-flag of uniform matroids on $E$. Thus $\{\hat 1\}$ is an open subset of $\Fl(\br,E)(\K)$ and, in consequence, $\Fl(\br,E)(\K)$ can be contracted to the point $\hat 1$, which generalizes the analogous result of Anderson and Davis for the ``hyperfield Grassmannian'' $\Gr(e,E)(\K)$; see \cite[section 6]{Anderson-Davis19}.

\subsubsection{The sign hyperfield}

Similar to the case of the Krasner hyperfield, $\Fl(\br,E)(\S)$ is a finite poset with the order topology. In this case, the topology is highly non-trivial and links to the (disproven) MacPhersonian conjecture in the case of the Grassmannian $\Gr(r,E)(\S)$, see \cite{Mnev-Ziegler93,Liu17,Anderson-Davis19}. An interesting question is whether the results from \cite{Anderson-Davis19} generalize to flag varieties. In particular, we pose the following question.
\begin{problem*}
 Does the continuous map $\sign_\ast:\Fl(\br,E)(\R)\to\Fl(\br,E)(\S)$ induce a surjection in mod $2$ cohomology?
\end{problem*}

\subsubsection{The triangular hyperfield}

Yet another interesting example of a topological idyll is Viro's \emph{triangle hyperfield} $\V$ (see \cite{Viro11,Anderson-Davis19}) whose underlying monoid is $\R_{\geq0}$, whose ambient semiring is $\N[\R_{>0}]$ and whose partial order is generated by the relations $0\leq a+b+c$ whenever $|a-b|\leq c\leq a+b$, endowed with the real topology. Alternatively $\V$ can be described as the hyperfield quotient of $\C$ by the unit circle $\S^1=\{z\in \C\mid |z|=1\}$. The continuous morphisms $\R\to\C\to\V$ induce continuous maps
\[
 \Fl(\br,E)(\R) \ \longrightarrow \ \Fl(\br,E)(\C) \ \longrightarrow \ \Fl(\br,E)(\V).
\]

\subsubsection{The regular partial field}

The natural choice of topology for the regular partial field is the discrete topology, which retains the position of the regular partial field as an initial object in the category of topological tracts. This topology turns $\Fl(\br,E)(\Funpm)$ into a discrete point set. The unique continuous map $\Funpm\to F$ into any other topological tract $F$ induces a continuous map $\Fl(\br,E)(\Funpm)\to\Fl(\br,E)(F)$.


\bibliographystyle{alpha}
\bibliography{flag}

\newcommand{\etalchar}[1]{$^{#1}$}
\begin{thebibliography}{BLVS{\etalchar{+}}99}

\bibitem[AD19]{Anderson-Davis19}
Laura Anderson and James~F. Davis.
\newblock Hyperfield {G}rassmannians.
\newblock {\em Adv. Math.}, 341:336--366, 2019.

\bibitem[And98]{Anderson98}
Laura Anderson.
\newblock Homotopy groups of the combinatorial {G}rassmannian.
\newblock {\em Discrete Comput. Geom.}, 20(4):549--560, 1998.

\bibitem[And19]{Anderson19}
Laura Anderson.
\newblock Vectors of matroids over tracts.
\newblock {\em J. Combin. Theory Ser. A}, 161:236--270, 2019.

\bibitem[Bab94]{Babson94}
Eric~Kendall Babson.
\newblock {\em A combinatorial flag space}.
\newblock ProQuest LLC, Ann Arbor, MI, 1994.
\newblock Thesis (Ph.D.)--Massachusetts Institute of Technology.

\bibitem[BB19]{Baker-Bowler19}
Matthew Baker and Nathan Bowler.
\newblock Matroids over partial hyperstructures.
\newblock {\em Adv. Math.}, 343:821--863, 2019.

\bibitem[BCTJ22]{Benedetti-Chavez-Tamayo19}
Carolina Benedetti, Anastasia Chavez, and Daniel Tamayo~Jim\'{e}nez.
\newblock Quotients of uniform positroids.
\newblock {\em Electron. J. Combin.}, 29(1):Paper No. 1.13, 2022.

\bibitem[BEW22]{Boretsky-Eur-Williams22}
Jonathan Boretsky, Christopher Eur, and Lauren Williams.
\newblock Polyhedral and tropical geometry of flag positroids.
\newblock Preprint, \arxiv{2208.09131}, 2022.

\bibitem[BEZ21]{Brandt-Eur-Zhang21}
Madeline Brandt, Christopher Eur, and Leon Zhang.
\newblock Tropical flag varieties.
\newblock {\em Adv. Math.}, 384:Paper No. 107695, 41, 2021.

\bibitem[BGS02]{Borovik-Gelfand-Stone02}
A.~V. Borovik, I.~M. Gelfand, and D.~A. Stone.
\newblock On the topology of the combinatorial flag varieties.
\newblock {\em Discrete Comput. Geom.}, 27(2):195--214, 2002.

\bibitem[BGVW97]{Borovik-Gelfand-Vince-White97}
Alexandre~V. Borovik, Israel~M. Gelfand, Andrew Vince, and Neil White.
\newblock The lattice of flats and its underlying flag matroid polytope.
\newblock {\em Ann. Comb.}, 1(1):17--26, 1997.

\bibitem[BGW00]{Borovik-Gelfand-White00}
Alexandre~V. Borovik, Israel~M. Gelfand, and Neil White.
\newblock Combinatorial {F}lag {V}arieties.
\newblock {\em J. Combin. Theory Ser. A}, 91(1-2):111--136, 2000.
\newblock Dedicated to the memory of Gian-Carlo Rota.

\bibitem[BGW01]{Borovik-Gelfand-White01}
Alexandre~V. Borovik, Israel~M. Gelfand, and Neil White.
\newblock Representations of matroids in semimodular lattices.
\newblock {\em European J. Combin.}, 22(6):789--799, 2001.

\bibitem[BGW03]{Borovik-Gelfand-White03}
Alexandre~V. Borovik, Israel~M. Gelfand, and Neil White.
\newblock {\em Coxeter Matroids}.
\newblock Progress in Mathematics. Birkhäuser Boston, 2003.

\bibitem[BK22]{Benedetti-Knauer22}
Carolina Benedetti and Kolja Knauer.
\newblock Lattice path matroids and quotients.
\newblock Preprint, \arxiv{2202.11634}, 2022.

\bibitem[BL21]{Baker-Lorscheid21}
Matthew Baker and Oliver Lorscheid.
\newblock The moduli space of matroids.
\newblock {\em Adv. Math.}, 390:Paper No. 107883, 118, 2021.

\bibitem[BLV78]{Bland-LasVergnas78}
Robert~G. Bland and Michel Las~Vergnas.
\newblock Orientability of matroids.
\newblock {\em J. Combin. Theory Ser. B}, 24(1):94--123, 1978.

\bibitem[BLVS{\etalchar{+}}99]{Bjorner-LasVergnas-Sturmfels-White-Ziegler99}
Anders Bj\"orner, Michel Las~Vergnas, Bernd Sturmfels, Neil White, and G\"unter~M. Ziegler.
\newblock {\em Oriented matroids}, volume~46 of {\em Encyclopedia of Mathematics and its Applications}.
\newblock Cambridge University Press, Cambridge, second edition, 1999.

\bibitem[Bor21]{Boretsky21}
Jonathan Boretsky.
\newblock Positive {T}ropical {F}lags and the {P}ositive {T}ropical {D}ressian.
\newblock Preprint, \arxiv{2111.12587}, 2021.

\bibitem[Bou87]{Bouchet87}
Andr\'{e} Bouchet.
\newblock Greedy algorithm and symmetric matroids.
\newblock {\em Math. Programming}, 38(2):147--159, 1987.

\bibitem[Bou89]{Bouchet89}
Andr\'{e} Bouchet.
\newblock Maps and {$\triangle$}-matroids.
\newblock {\em Discrete Math.}, 78(1-2):59--71, 1989.

\bibitem[CC76]{Cheung-Crapo76}
Alan Cheung and Henry Crapo.
\newblock A combinatorial perspective on algebraic geometry.
\newblock {\em Adv. Math.}, 20(3):388--414, 1976.

\bibitem[CDMS22]{Cameron-Dinu-Michalek-Seynnaeve18}
Amanda Cameron, Rodica Dinu, Mateusz Micha{\l}ek, and Tim Seynnaeve.
\newblock Flag matroids: Algebra and {G}eometry.
\newblock In Alexander~M. Kasprzyk and Benjamin Nill, editors, {\em Interactions with Lattice Polytopes}, pages 73--114, Cham, 2022. Springer International Publishing.

\bibitem[DES21]{Dinu-Eur-Seynnaeve21}
Rodica Dinu, Christopher Eur, and Tim Seynnaeve.
\newblock {$K$}-theoretic {T}utte polynomials of morphisms of matroids.
\newblock {\em J. Combin. Theory Ser. A}, 181:Paper No. 105414, 36, 2021.

\bibitem[dM07]{deMier07}
Anna de~Mier.
\newblock A natural family of flag matroids.
\newblock {\em SIAM J. Discrete Math.}, 21(1):130--140, 2007.

\bibitem[Dre86]{Dress86}
Andreas W.~M. Dress.
\newblock Duality theory for finite and infinite matroids with coefficients.
\newblock {\em Adv. Math.}, 59(2):97--123, 1986.

\bibitem[DW91]{Dress-Wenzel91}
Andreas W.~M. Dress and Walter Wenzel.
\newblock Grassmann-{P}l\"{u}cker relations and matroids with coefficients.
\newblock {\em Adv. Math.}, 86(1):68--110, 1991.

\bibitem[DW92a]{Dress-Wenzel92}
Andreas W.~M. Dress and Walter Wenzel.
\newblock Perfect matroids.
\newblock {\em Adv. Math.}, 91(2):158--208, 1992.

\bibitem[DW92b]{Dress-Wenzel92b}
Andreas W.~M. Dress and Walter Wenzel.
\newblock Valuated matroids.
\newblock {\em Adv. Math.}, 93(2):214--250, 1992.

\bibitem[FH22]{Fujishige-Hirai22}
Satoru Fujishige and Hiroshi Hirai.
\newblock Compression of {$M^\sharp$}-convex functions---{F}lag matroids and valuated permutohedra.
\newblock {\em J. Combin. Theory Ser. A}, 185:Paper No. 105525, 14, 2022.

\bibitem[Gar18]{Garcia18}
Zachary~David Garcia.
\newblock Exploring flag matroids and duality.
\newblock Master's thesis, California State University, 2018.

\bibitem[GG18]{Giansiracusa-Giansiracusa18}
Jeffrey Giansiracusa and Noah Giansiracusa.
\newblock A {G}rassmann algebra for matroids.
\newblock {\em manuscripta math.}, 156:187--213, 2018.

\bibitem[GS87]{Gelfand-Serganova87}
I.~M. Gelfand and V.~V. Serganova.
\newblock On the general definition of a matroid and a greedoid.
\newblock {\em Dokl. Akad. Nauk SSSR}, 292(1):15--20, 1987.

\bibitem[Hig68]{Higgs68}
D.~A. Higgs.
\newblock Strong maps of geometries.
\newblock {\em J. Combinatorial Theory}, 5(2):185--191, 1968.

\bibitem[Jar23]{Jarra22}
Manoel Jarra.
\newblock Exterior algebras in matroid theory.
\newblock {\em manuscripta math.}, 172:427--442, 2023.

\bibitem[JLLAO23]{Joswig-Loho-Luber-Olarte21}
Michael Joswig, Georg Loho, Dante Luber, and Jorge Alberto~Olarte.
\newblock {Generalized Permutahedra and Positive Flag Dressians}.
\newblock {\em Int. Math. Res. Not.}, 2023(19):16748--16777, 2023.

\bibitem[Ken75]{Kennedy75}
Daniel Kennedy.
\newblock Majors of geometric strong maps.
\newblock {\em Discrete Math.}, 12(4):309--340, 1975.

\bibitem[Kun77]{Kung77}
Joseph P.~S. Kung.
\newblock The core extraction axiom for combinatorial geometries.
\newblock {\em Discrete Math.}, 19(2):167--175, 1977.

\bibitem[Kun86]{Kung86}
Joseph P.~S. Kung.
\newblock Strong maps.
\newblock In {\em Theory of Matroids}, Encyclopedia of Mathematics and its Applications, pages 224--253. Cambridge University Press, 1986.

\bibitem[Liu20]{Liu17}
Gaku Liu.
\newblock A counterexample to the extension space conjecture for realizable oriented matroids.
\newblock {\em J. Lond. Math. Soc.}, 101:175--193, 2020.

\bibitem[LL12]{LopezPena-Lorscheid12}
Javier {L\'opez Pe\~na} and Oliver Lorscheid.
\newblock Projective geometry for blueprints.
\newblock {\em C. R. Math. Acad. Sci. Paris}, 350(9-10):455--458, 2012.

\bibitem[Lor15]{Lorscheid15}
Oliver Lorscheid.
\newblock Scheme theoretic tropicalization.
\newblock Preprint, \arxiv{1508.07949}, 2015.

\bibitem[Lor22]{Lorscheid19}
Oliver Lorscheid.
\newblock Tropical geometry over the tropical hyperfield.
\newblock {\em Rocky Mountain J. Math.}, 52(1):189--222, 2022.

\bibitem[Mun18]{mathoverflow302574}
Joshua Mundiger.
\newblock Is this condition equivalent to being a matroid quotient?
\newblock Discussion on Mathoverflow, \url{https://mathoverflow.net/questions/302574/is-this-condition-equivalent-to-being-a-matroid-quotient}, 2018.

\bibitem[MZ93]{Mnev-Ziegler93}
Nicolai~E. Mn\"ev and G\"unter~M. Ziegler.
\newblock Combinatorial models for the finite-dimensional {G}rassmannians.
\newblock {\em Discrete Comput. Geom.}, 10(3):241--250, 1993.

\bibitem[Rec05]{Recski05}
Andr\'{a}s Recski.
\newblock Maps on matroids with applications.
\newblock {\em Discrete Math.}, 303(1-3):175--185, 2005.

\bibitem[{Ric}93]{Richter-Gebert93}
J\"{u}rgen {Richter-Gebert}.
\newblock Oriented matroids with few mutations.
\newblock {\em Discrete Comput. Geom.}, 10(3):251--269, 1993.

\bibitem[Tar85]{Tardos85}
\'{E}va Tardos.
\newblock Generalized matroids and supermodular colourings.
\newblock In {\em Matroid theory ({S}zeged, 1982)}, volume~40 of {\em Colloq. Math. Soc. J\'{a}nos Bolyai}, pages 359--382. North-Holland, Amsterdam, 1985.

\bibitem[Tit57]{Tits57}
J.~Tits.
\newblock Sur les analogues alg\'{e}briques des groupes semi-simples complexes.
\newblock In {\em Colloque d'alg\`ebre sup\'{e}rieure, tenu \`a {B}ruxelles du 19 au 22 d\'{e}cembre 1956}, Centre Belge de Recherches Math\'{e}matiques, pages 261--289. \'{E}tablissements Ceuterick, Louvain; Librairie Gauthier-Villars, Paris, 1957.

\bibitem[Vir11]{Viro11}
Oleg~Ya. Viro.
\newblock On basic concepts of tropical geometry.
\newblock {\em Proc. Steklov Inst. Math.}, 273(1):252--282, 2011.

\end{thebibliography}

\end{document}